\newtheorem{theorem}{Theorem}[section]
\newtheorem{lemma}[theorem]{Lemma}
\newtheorem{problem}[theorem]{Problem}
\newtheorem{proposition}[theorem]{Proposition}
\newtheorem{corollary}[theorem]{Corollary}
\newtheorem{conjecture}[theorem]{Conjecture}
\theoremstyle{definition}
\newcommand{\bF}{\mathbb F}
\newcommand{\bR}{\mathbb R}
\newcommand{\cB}{\mathcal{B}}
\newcommand{\cF}{\mathcal{F}}
\newcommand{\cL}{\mathcal{L}}
\newcommand{\cH}{\mathcal{H}}
\newcommand{\cM}{\mathcal{M}}
\newcommand{\cN}{\mathcal{N}}
\newcommand{\cS}{\mathcal{S}}
\DeclareMathOperator{\si}{si}
\DeclareMathOperator{\cl}{cl}
\DeclareMathOperator{\PG}{PG}
\newcommand{\del}{\!\setminus\!}
\newcommand{\con}{/}
\newcommand{\ind}{\text{-ind}}
\DeclareMathOperator{\ex}{ex}
\newcommand{\Q}{\mathsf{Q}}
\newcommand{\M}{\mathsf{M}}
\newcommand{\sfH}{\mathsf{H}}
\newcommand{\cD}{\mathcal{D}}
\newcommand{\BBG}[3]{\text{BB}(#1,#2,#3)}
\renewcommand{\PG}[2]{\text{PG}(#1,#2)}
\title{Turán densities for matroid basis hypergraphs}
\author{Jorn van der Pol\footnote{University of Twente, Email: j.g.vanderpol@utwente.nl}
\and 
Zach Walsh\footnote{Auburn University, Email: zwalsh@auburn.edu}
\and 
Michael C. Wigal\footnote{University of Illinois at Urbana--Champaign, Email: wigal@illinois.edu}}
\date{\today}
\begin{document}

\maketitle

\begin{abstract}
Let $U$ be a uniform matroid. For all positive integers $n$ and $r$ with $n \ge r$, what is the maximum number of bases of an $n$-element, rank-$r$ matroid without $U$ as a minor? 
We show that this question arises by restricting the problem of determining the Tur\'an number of a daisy hypergraph to the family of matroid basis hypergraphs.
We then answer this question for several interesting choices of $U$.
\end{abstract}

\section{Introduction} \label{sec: introduction}

For a matroid $M$, we write $b(M)$ for the number of bases of $M$. Classic results of Bj\"{o}rner \cite{Bjorner} and Purdy \cite{Purdy} were concerned with minimizing $b(M)$ over all matroids $M$.
This note is concerned with maximizing the number of bases in matroids. Without any restrictions, this problem is trivial, as for any fixed size and rank, a uniform matroid uniquely maximizes the number of bases. To make this problem interesting, we place structural restrictions on the matroids considered. For a matroid $N$, we write 
\[ \ex_{\M}(n,r,N) = \max\{ b(M) : \text{$M$ is an $n$-element, rank-$r$, $N$-minor-free matroid}\}\]
and
\[ \pi_{\M}(r,N) = \lim_{n \to \infty} \frac{\ex_{\M}(n,r,N)}{\binom{n}{r}}.\]
Standard averaging arguments show that $\pi_{\M}(r,N)$ and $\lim_{r \to \infty} \pi_{\M}(r,N)$ exist -- see Propositions~\ref{prop: deletion} and \ref{prop: contraction}. We say that the numbers $\ex_{\M}(n,r,N)$, $\pi_{\M}(r,N)$, and $ \lim_{r \to \infty} \pi_{\M}(r,N)$ are the \emph{Tur{\'a}n basis number}, \emph{Tur{\'a}n basis density}, and \emph{link Tur{\'a}n basis density} of the matroid~$N$. We will focus on the case in which $N$ is a uniform matroid $U_{s,t}$ of rank $s$ on $t$ elements.

\begin{problem}\label{prob: main problem}
Let $s$ and $t$ be positive integers with $t \ge s$.
\begin{enumerate}[$(i)$]
    \item For all integers $n$ and $r$ with $n \ge r \ge s$, determine $\ex_{\M}(n, r, U_{s,t})$.

    \item For all integers $r$ with $r \ge s$, determine $\pi_{\M}(r, U_{s,t})$.

    \item Determine $\displaystyle \lim_{r \to \infty}\pi_{\M}(r, U_{s,t})$.
\end{enumerate}
\end{problem}

While we believe that this problem is interesting from a purely matroidal perspective, it has close connections to several interesting related problems as well.
We comment that while several past works bound the number of bases of a matroid in terms of other parameters such as number of circuits \cite{Ding} or sizes of corcircuits \cite{DouthittOxley}, we bound the number of bases in terms of rank and number of elements.

Our first main result is the following.

\begin{theorem} \label{thm: binary daisies}
Let $n$ and $r$ be integers and let $t$ be a prime power such that  $r \ge 2$. Then
\[ \ex_{\M}(n,r,U_{2,t+2}) \le \frac{n^r \prod_{i=0}^{r-1}(t^r- t^i)}{r! \cdot (t^r - 1)^r}  \]
with equality when $n$ is a multiple of $\frac{t^r - 1}{t-1}$.
In particular,
$$\lim_{r \to\infty}\pi_{\M}(r, U_{2,t+2}) = \prod_{i=1}^{\infty}(1 - t^{-i}).$$
\end{theorem}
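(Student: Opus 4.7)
The plan is to reduce to simple matroids via parallel-class weights, apply Kung's theorem to bound the number of points, and then solve the resulting optimization using the extremality of $\PG{r-1}{t}$; the limit statement will follow by a direct computation.

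Let $M$ be an $n$-element, rank-$r$, $U_{2,t+2}$-minor-free matroid, with simplification $M'$. Then $M'$ is a simple rank-$r$ matroid that is also $U_{2,t+2}$-minor-free; write $k$ for the number of its points and $a_1,\dots,a_k$ for the sizes of the corresponding parallel classes of $M$, so that $\sum_i a_i = n$. Choosing one representative per parallel class from each basis of $M'$ yields every basis of $M$ exactly once, so
\[
b(M) = \sum_{B \text{ a basis of } M'} \prod_{p_i \in B} a_i.
\]
Since $t$ is a prime power, Kung's theorem gives $k \le k_0 := (t^r - 1)/(t-1)$, with equality attained (uniquely, up to isomorphism) by $M' = \PG{r-1}{t}$. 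The main inequality to establish is
\[
\sum_{B} \prod_{p_i \in B} a_i \;\le\; b\bigl(\PG{r-1}{t}\bigr) \cdot (n/k_0)^r \;=\; \frac{n^r \prod_{i=0}^{r-1}(t^r - t^i)}{r!(t^r-1)^r},
\]
with the right-hand side attained at $M' = \PG{r-1}{t}$ together with the uniform weights $a_i = n/k_0$. I would treat this as a two-step optimization. First, using the vertex-transitive automorphism group of $\PG{r-1}{t}$, a convexity / Lagrangian argument shows that for $M' = \PG{r-1}{t}$ the weighted basis sum is maximised over the simplex $\{a \ge 0 : \sum_i a_i = n\}$ at the uniform distribution. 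Second, I would compare general admissible $M'$ to $\PG{r-1}{t}$, showing that $\PG{r-1}{t}$ maximises the number of bases among rank-$r$ simple $U_{2,t+2}$-minor-free matroids; this is the matroidal analog of the daisy-hypergraph Tur\'an bound referenced in the paper's introduction.

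Equality for $n = mk_0$ is realised by the matroid obtained from $\PG{r-1}{t}$ by replacing each point with $m$ parallel elements; its bases biject with pairs (basis of $\PG{r-1}{t}$, choice of one parallel element per point), giving exactly $b(\PG{r-1}{t}) \cdot m^r$ bases. For the limit statement, direct computation gives
\[
\pi_\M(r, U_{2,t+2}) = \prod_{i=0}^{r-1} \frac{t^r - t^i}{t^r - 1} = \prod_{j=1}^{r} \frac{1 - t^{-j}}{1 - t^{-r}},
\]
and as $r \to \infty$ the factor $(1 - t^{-r})^r$ in the denominator tends to $1$ while the numerator converges to $\prod_{j=1}^\infty (1 - t^{-j})$.

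The main obstacle is the second half of the optimization step: for general prime power $t$, the class of $U_{2,t+2}$-minor-free matroids strictly contains the $\GF(t)$-representable ones, so there is no direct submatroid embedding into $\PG{r-1}{t}$ (unlike the binary case $t = 2$, where Tutte's excluded-minor theorem applies). I would attempt this either by a careful convexity / inductive argument on the weighted basis polynomial, or, as the abstract suggests, by invoking a suitable Tur\'an-type bound for daisy hypergraphs specialised to the subclass of matroid basis hypergraphs.
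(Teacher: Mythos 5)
Your reduction to the weighted basis polynomial (equivalently, to the Lagrangian $\lambda(\si(M))$ after normalizing the weights to sum to $1$) is exactly the paper's starting point, and your treatment of the equality case and of the $r\to\infty$ limit is correct. However, your plan has a genuine gap at the step you yourself flag, and moreover the way you formulate that step is not quite the right target. You propose, as step $(b)$, to show that $\PG{r-1}{t}$ maximizes the \emph{number of bases} among rank-$r$ simple $U_{2,t+2}$-minor-free matroids. Even if granted, this does not yield the inequality you need, because of normalization: a simple competitor $M'$ with $k < (t^r-1)/(t-1)$ points receives uniform weights $n/k$, which are \emph{larger} than the weights $n/k_0$ placed on the points of $\PG{r-1}{t}$, so $b(M') \le b(\PG{r-1}{t})$ does not by itself control $b(M')(n/k)^r$ against $b(\PG{r-1}{t})(n/k_0)^r$. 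What is actually needed is the Lagrangian inequality $\lambda(M') \le \lambda(\PG{r-1}{t})$ for every simple $U_{2,t+2}$-minor-free $M'$ of rank $r$; your step $(a)$ is merely the evaluation of the right-hand side and does not interact usefully with the basis-count version of step $(b)$.

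The paper closes the gap without ever comparing $M'$ directly to $\PG{r-1}{t}$. Theorem~\ref{thm: Lagrangian upper bound} proves $\lambda(M) \le b(r,t)\bigl(\tfrac{t-1}{t^r-1}\bigr)^r$ by simultaneous induction on the rank $r$ and on the number of elements. The mechanism is: if the optimizer $\mathbf{x}^*$ lies in the relative interior of the simplex, all partial derivatives equal a common constant $c$, and Euler's identity for the degree-$r$ homogeneous polynomial $p_M$ gives $c = r\cdot p_M(\mathbf{x}^*)$; Lemma~\ref{lem: Lagrangian gradient} bounds $c \le (1-x_i^*)^{r-1}\lambda(M/i)$, tying the derivative to the contraction $M/i$, whose Lagrangian is controlled by the rank-$(r-1)$ induction; finally, Kung's theorem plus pigeonhole force some $x_j^* \ge (t-1)/(t^r-1)$, and feeding this into the recursion~\eqref{eq:b_recursion} completes the estimate. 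So the missing ingredient in your outline is precisely this derivative--contraction lemma together with the Euler-identity bookkeeping; you correctly identified that some inductive or convexity argument on the weighted basis polynomial is needed, but you did not supply one, and the ``compare to $\PG{r-1}{t}$'' route as you stated it would not suffice even if carried out.
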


Theorem~\ref{thm: binary daisies} was inspired by a recent breakthrough result of Ellis, Ivan, and Leader~\cite{EllisIvanLeader} regarding vertex Tur\'an problems for the hypercube.
They found a construction that gives a lower bound for the limit of Theorem \ref{thm: binary daisies}, which has had wide-reaching consequences throughout combinatorics, see e.g.~\cite{Alon,AxenovichLiu,GrebennikovMarciano}. Our matching upper bound of Theorem~\ref{thm: binary daisies} shows that one must look beyond matroid basis hypergraphs to improve upon their work. We discuss this further in Section~\ref{sec: matroid introducton}.

We also obtain bounds for forbidding $U_{s, s+1}$-minors. 
The number $\ex_{\M}(n, r, U_{s, s+1})$ is particularly natural: it is the maximum number of bases among all $n$-element rank-$r$ matroids with circumference at most $s$, where  \emph{circumference} is the number of elements in a largest circuit of the matroid.
We determine $\pi_{\M}(r, U_{3, 4})$ for all $r \ge 3$.

\begin{theorem} \label{theorem: density for K_4^{(3)}}
Let $r$ be an integer with $r \ge 3$. Then $\pi_{\M}(r, U_{3,4}) = \frac{r!\cdot 2^{\lfloor r/2 \rfloor}}{r^r}$.
\end{theorem}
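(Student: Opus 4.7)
The plan is to characterize $U_{3,4}$-minor-freeness structurally, then match a tight construction against a clean optimization. The key preliminary observation is that a matroid $M$ has no $U_{3,4}$-minor if and only if $M$ has circumference at most $3$: given a circuit $C$ of $M$ with $|C| \ge 4$, contracting any $|C|-4$ elements of $C$ in $M|C$ yields a $4$-element, rank-$3$ matroid in which every $3$-subset is independent, which is $U_{3,4}$; the converse is immediate since $U_{3,4}$ is itself a $4$-circuit. So it suffices to bound $b(M)$ over $n$-element, rank-$r$ matroids with circumference at most $3$.

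For the lower bound, write $r = 2k + m$ with $k = \lfloor r/2 \rfloor$ and $m \in \{0,1\}$. When $r \mid n$, consider
\[ M = U_{2,\,2n/r}^{\oplus k} \oplus U_{1,\,n/r}^{\oplus m}. \]
Every circuit lies in a single summand and has size at most $3$, so $M$ is $U_{3,4}$-minor-free, and
\[ b(M) = \binom{2n/r}{2}^{k}(n/r)^{m} \sim 2^k \,(n/r)^r. \]
Dividing by $\binom{n}{r} \sim n^r/r!$ gives density tending to $\frac{r! \cdot 2^{\lfloor r/2 \rfloor}}{r^r}$; for $n$ not divisible by $r$, distribute the remainder among blocks, which perturbs the count by only lower-order terms.

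For the upper bound, I first prove a structural lemma: every matroid with circumference at most $3$ is a direct sum of rank-at-most-$2$ components (together with loops). In $\si(M)$, every circuit is a triangle, whose closure is a rank-$2$ flat that I call a \emph{long line}. The crucial step is to show distinct long lines in $\si(M)$ are disjoint. Suppose $L_1, L_2$ are distinct long lines sharing a point $e$; pick $f_1, f_2 \in L_1 \setminus \{e\}$ and $f_3, f_4 \in L_2 \setminus \{e\}$. Weak circuit elimination applied to the triangles $\{e, f_1, f_2\}$ and $\{e, f_3, f_4\}$ yields a circuit contained in $\{f_1, f_2, f_3, f_4\}$, which cannot have size $2$ (by simplicity), size $3$ (any such triple contains two points of some $L_i$, which determine that line uniquely, forcing the third point into $L_i$ and contradicting $L_1 \cap L_2 = \{e\}$), or size $4$ (by the circumference hypothesis), a contradiction. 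Hence long lines are pairwise disjoint, and any element of $\si(M)$ lying on none of them is in no circuit at all and is therefore a coloop. Since every circuit of $\si(M)$ lies in a single long line, $\si(M)$ decomposes as the direct sum of the uniform matroids $U_{2,\ell_i}$ on its long lines together with a free matroid on its coloops; this decomposition lifts to $M$ via parallel-class blow-ups.

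Writing $M$ as a direct sum of $k$ rank-$2$ blocks of sizes $N_1, \dots, N_k$ and $m$ rank-$1$ blocks of sizes $q_1, \dots, q_m$ with $2k + m = r$ and $\sum_i N_i + \sum_j q_j \le n$, the $i$-th rank-$2$ block contributes at most $\binom{N_i}{2} \le N_i^2/2$ bases and each rank-$1$ block contributes $q_j$ bases, so
\[ b(M) \le \frac{1}{2^k}\prod_i N_i^2 \prod_j q_j. \]
For fixed $k,m$, AM-GM bounds $\prod N_i^2 \le (\sum_i N_i / k)^{2k}$ and $\prod q_j \le (\sum_j q_j / m)^m$, and optimizing over the split $\sum_i N_i + \sum_j q_j = n$ yields the maximum at $\sum_i N_i = 2kn/r$ and $\sum_j q_j = mn/r$, giving $\prod_i N_i^2 \prod_j q_j \le 4^k (n/r)^r$ and hence $b(M) \le 2^k (n/r)^r$. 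Maximizing over $0 \le k \le \lfloor r/2 \rfloor$ gives $b(M) \le 2^{\lfloor r/2 \rfloor}(n/r)^r$; dividing by $\binom{n}{r} \sim n^r/r!$ and letting $n \to \infty$ produces the matching upper bound $\pi_{\M}(r, U_{3,4}) \le \frac{r! \cdot 2^{\lfloor r/2 \rfloor}}{r^r}$. The main obstacle is the structure lemma---in particular, excluding one-point overlaps between long lines via circuit elimination and lifting the simple-matroid decomposition to $M$ through parallel-class blow-ups---after which the extremal problem reduces to the routine optimization above.
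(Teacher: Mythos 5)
Your proof is correct, and it reaches the paper's central structural fact --- that every $U_{3,4}$-minor-free matroid is a direct sum of rank-at-most-$2$ summands (Lemma~\ref{lem: D_r(3,4) structure}) --- by a genuinely different argument. You work globally with the long lines of $\si(M)$ and use circuit elimination to show distinct long lines are disjoint, then observe that any point off all long lines lies in no triangle and hence (given circumference $\le 3$) in no circuit, i.e.\ is a coloop; the paper instead fixes a single basis $B$, shows each element off $B$ is spanned by a pair from $B$, and uses circuit elimination to show these spanning pairs cannot overlap in exactly one element, so they induce a partition of $B$ into parts of size at most $2$. Your version avoids privileging a basis and makes the ``union of lines plus free part'' picture explicit. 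For the counting step you also diverge: you carry out the full optimization in a single pass --- bound each rank-$2$ block by $N_i^2/2$, apply AM--GM separately within the rank-$1$ and rank-$2$ families, optimize the split $\sum N_i + \sum q_j = n$ by Lagrange multipliers, and finally maximize over the number $k$ of rank-$2$ blocks --- whereas the paper inducts on $r$, peeling off one rank-$2$ summand when $r$ is even or one rank-$1$ summand when $r$ is odd and optimizing a two-variable product at each step. Your non-inductive argument makes the source of the factor $2^{\lfloor r/2\rfloor}$ and the extremal proportions transparent at once, at the cost of a bit more calculus; the paper's induction is shorter to typeset but hides the Lagrange condition in the inductive hypothesis. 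Both approaches identify the same extremal construction (a balanced direct sum of lines, plus one parallel class when $r$ is odd), and both give the same density.
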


In particular, $\pi_{\M}(3,U_{3,4}) = 4/9$. We can interpret $\pi_{\M}(s, U_{s,t})$ as a specialization of the well-known hypergraph Tur\'an density parameter $\pi(K_t^{(s)})$, where $K_t^{(s)}$ is the $s$-uniform hypergraph on $t$ vertices. This problem is particularly well-studied in the case $t = s+1$ \cite{BaloghBohmanBollabasZhao, LuZhao, Sidorenko}. We discuss this further in Section~\ref{sec: matroid introducton}.

Finally, in Section \ref{sec: D(3,t)}, we focus on $\pi_{\M}(3, U_{3,t})$. 
Our third main result is as follows.

\begin{theorem} \label{theorem: density for K_5^{(3)}}
$\pi_{\M}(3, U_{3,5}) = 3/4$.
\end{theorem}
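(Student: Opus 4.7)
The bound $\pi_{\M}(3, U_{3,5}) \ge 3/4$ is witnessed by the rank-$3$ matroid $M_n$ whose ground set partitions into two disjoint rank-$2$ flats of sizes $\lfloor n/2 \rfloor$ and $\lceil n/2 \rceil$. Every five-element subset has, by pigeonhole, three elements in one of the flats, hence a collinear triple; so $M_n$ has no $U_{3,5}$-restriction, and since $U_{3,5}$ has the same rank as $M_n$, no $U_{3,5}$-minor either. Counting gives $b(M_n) = \binom{n}{3} - \binom{\lfloor n/2\rfloor}{3} - \binom{\lceil n/2\rceil}{3}$, so $b(M_n)/\binom{n}{3} \to 3/4$.

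\paragraph{Upper bound setup.} For the upper bound, let $M$ be a simple rank-$3$ matroid on $n$ elements with no $U_{3,5}$-minor (reduction to the simple case is routine). If $M$ contains no $4$-arc (four elements pairwise in general position), then $M$ is $U_{3,4}$-minor-free, and Theorem~\ref{theorem: density for K_4^{(3)}} gives $b(M) \le (4/9 + o(1))\binom{n}{3}$, well below $(3/4)\binom{n}{3}$. Otherwise fix a $4$-arc $A = \{a_1, a_2, a_3, a_4\}$, and for each pair $\{i,j\} \subseteq \{1,2,3,4\}$ let $\ell_{ij}$ be the line spanned by $\{a_i, a_j\}$. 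Because $A \cup \{x\}$ cannot be a $5$-arc, every $x \in E(M)$ lies on at least one $\ell_{ij}$.

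\paragraph{Key Lemma.} The heart of the argument is the structural claim: at most two of the six lines $\ell_{ij}$ contain more than a constant number of points outside $A$, and those two must form a ``complementary'' pair such as $\{\ell_{12}, \ell_{34}\}$. Suppose both $\ell_{12}$ and $\ell_{13}$ (which share the arc point $a_1$) have many inner points; set $X = \ell_{12} \setminus A$ and $Y = \ell_{13} \setminus A$, and define the bipartite graph $G$ on $X \cup Y$ with $xy \in E(G)$ iff $\{a_4, x, y\}$ is collinear. The complement $\bar G$ is $K_{2,2}$-free: a $K_{2,2}$ in $\bar G$ would give $\{x_1, x_2\} \subseteq X$ and $\{y_1, y_2\} \subseteq Y$ with all four triples $\{a_4, x_s, y_t\}$ independent, and a short case check (using $a_4 \notin \ell_{12} \cup \ell_{13}$ and that distinct lines meet in at most one point) then shows the remaining six triples of $\{a_4, x_1, x_2, y_1, y_2\}$ are also independent, yielding a forbidden $5$-arc. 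K\H{o}v\'ari--S\'os--Tur\'an thus gives $|E(\bar G)| = O(|X|\sqrt{|Y|})$, so $|E(G)| \ge |X||Y| - O(|X|\sqrt{|Y|})$. On the other hand, for distinct $x \in X$ the lines $\overline{a_4 x}$ are pairwise distinct and each meets $\ell_{13}$ in at most one point, so $G$ has maximum degree at most $1$ and $|E(G)| \le \min(|X|, |Y|)$. Combining forces $\min(|X|, |Y|) = O(1)$, so at most one of $\ell_{12}, \ell_{13}$ has more than a constant number of inner points. Applying this at every arc point shows each $a_i$ is on at most one large line; hence the large lines form (at most) a single complementary pair.

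\paragraph{Conclusion and main obstacle.} Once we have at most $O(1)$ elements of $M$ off of $\ell_{12} \cup \ell_{34}$, $|\ell_{12}| + |\ell_{34}| \ge n - O(1)$, and the non-basis count is at least $\binom{|\ell_{12}|}{3} + \binom{|\ell_{34}|}{3}$ (lines being rank-$2$ flats and two lines meeting in at most one point). By convexity under the constraint $|\ell_{12}| + |\ell_{34}| \ge n - O(1)$, this is at least $2\binom{n/2}{3} - o(n^3) = (1/4 - o(1))\binom{n}{3}$, whence $b(M) \le (3/4 + o(1))\binom{n}{3}$, giving $\pi_{\M}(3, U_{3,5}) \le 3/4$ and matching the lower bound. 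The main obstacle is the Key Lemma: one must verify carefully that in $\{a_4, x_1, x_2, y_1, y_2\}$ every triple other than the four $\{a_4, x_s, y_t\}$ is automatically a basis, and handle degenerate configurations (for instance, when the two complementary lines happen to meet). The convexity step is then routine.
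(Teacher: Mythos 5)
Your approach is genuinely different from the paper's. The paper proves a sharp structural dichotomy (its Lemma~\ref{lemma: structure for D_3(3,5)}): a rank-$3$ matroid with no $U_{3,5}$-restriction either has no $U_{2,5}$-minor at all, or is \emph{exactly} the union of two lines. In the first case Kung's bound (Theorem~\ref{thm: Kung}) combined with the Lagrangian estimate (Theorem~\ref{thm: finite field daisy matroid Turan density}) handles all matroids, simple or not; in the second case an elementary optimization over parallel-class sizes gives the sharp count. Your approach instead reduces to a $4$-arc and then runs a K\H{o}v\'ari--S\'os--T\'uran count on the two lines through a shared arc point. That density argument is sound for simple matroids: I checked that the remaining six triples of $\{a_4,x_1,x_2,y_1,y_2\}$ really are bases once the four $\{a_4,x_s,y_t\}$ are, and the matching bound $|E(G)|\le\min(|X|,|Y|)$ is correct, so you do force $\min(|X|,|Y|)=O(1)$ and hence at most one large line per arc point.

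The gap is the sentence ``reduction to the simple case is routine,'' followed later by the jump from ``$O(1)$ exceptional \emph{points}'' to ``$O(1)$ \emph{elements} off $\ell_{12}\cup\ell_{34}$.'' Your KST argument lives entirely inside $\si(M)$ and only controls the number of exceptional points; those $O(1)$ points can each sit atop a parallel class of size $\Theta(n)$ in $M$, so the elements outside the two lines need not be $O(1)$, and the inequality ``non-bases $\ge\binom{|\ell_{12}|}{3}+\binom{|\ell_{34}|}{3}$'' no longer gives $(1/4-o(1))\binom{n}{3}$. The usual way to make the simple reduction rigorous here is via the Lagrangian (as in Lemma~\ref{lem: Lagrangian gradient} and Theorem~\ref{thm: Lagrangian upper bound}): one needs $\lambda(N)\le 1/8$ for every simple rank-$3$ $U_{3,5}$-free $N$, but your argument only bounds $p_N$ at the uniform point, not its maximum over the simplex, and ruling out the Lagrangian concentrating weight on the $O(1)$ exceptional points is exactly the part you have not addressed. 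This is the real content the paper's exact dichotomy buys you: ``no $U_{2,5}$-minor'' yields a weight-insensitive Lagrangian bound, and ``exactly two lines'' has no leftover points at all. Your argument would go through if you strengthened the Key Lemma to an exact covering (no exceptional points), or if you separately showed that the exceptional points cannot carry $\Omega(n)$ elements without creating a $U_{3,5}$; as written, that step is missing.
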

This confirms a famous conjecture of Tur\'an \cite{Turan1961}, that $\pi(K_5^{(3)}) = 3/4$, in the special case of matroid basis hypergraphs.
In addition to Theorem \ref{theorem: density for K_5^{(3)}}, we prove two structural results (Theorems \ref{thm: structure for D(3,2m+1)} and \ref{thm: structure for D(3,2m+2)}) for rank-$3$ matroids with no $U_{3,t}$-restriction. These may provide a future approach for finding $\pi_{\M}(3, U_{3,t})$, and may be of independent interest to matroid theorists and finite geometers.

The rest of this note is organized as follows. In Section \ref{sec: matroid introducton}, we will outline preliminaries regarding matroids and hypergraphs. In Section~\ref{sec: matroid hypergraphs} we will show that the limits of Problem \ref{prob: main problem} exist and discuss characterizations of matroid basis hypergraphs, including a characterization via induced Tur\'an numbers as introduced by Loh, Tait, Timmons, and Zhou \cite{LohPoShenTaiTimmonsZhou}.
In Section \ref{sec: U_{1, t} and U_{2,t}} we will prove Theorem \ref{thm: binary daisies}. 
In Section \ref{sec: circumference} we will prove Theorem \ref{theorem: density for K_4^{(3)}} and present a conjecture for $\ex_{\M}(n, U_{s,s+1})$. In Section \ref{sec: D(3,t)} we will prove Theorem \ref{theorem: density for K_5^{(3)}} along with the structural properties of Theorems \ref{thm: structure for D(3,2m+1)} and \ref{thm: structure for D(3,2m+2)}. Finally, in Section \ref{sec: open problems}, we will discuss some related problems and directions for future work.

\section{Preliminaries} \label{sec: matroid introducton}
In this section we will give a brief overview of relevant matroid terminology and how it relates to hypergraphs. We refer the reader to~\cite{Oxley2011} for any notation that remains undefined and for background information regarding matroids.

\subsection{Matroids}
A \emph{matroid} $M$ is a pair $(E, \cB)$ where $E$ is a finite set and $\cB$ is a non-empty family of subsets of $E$ that satisfies the following property:
\begin{enumerate}[(B1)]
\item For all $B_1, B_2 \in \cB$ and $x \in B_1 - B_2$, there exists $y \in B_2 - B_1$ so that $(B_1 - \{x\}) \cup \{y\} \in \cB$.
\end{enumerate}
The set $E$ is the \emph{ground set} of $M$, the sets in $\cB$ are the \emph{bases} of $M$, and property (B1) is the \emph{basis-exchange property}.
We often write $\cB(M)$ for the set of bases of a matroid $M$.
We will take the standard matroid theory convention and write $X - x$ for $X - \{x\}$ for a set $X$ with an element $x$.
It follows from (B1) that all sets in $\cB$ have the same cardinality, which is the \emph{rank} of $M$.
So, every rank-$r$ matroid $M = (E, \cB)$ can be viewed as an $r$-uniform hypergraph with vertex set $E$ and edge set $\cB$; we say that this hypergraph is the \emph{basis hypergraph of $M$}.
We write $\M$ for the class of matroid basis hypergraphs.
Matroid basis hypergraphs are not particularly well-studied, although they do play a central role in several recent works~\cite{BorosGurvichHoMakinoMursic,CameronAjith,Lucchini}.

Let $M = (E, \cB)$ be a matroid.
We say that $M$ is \emph{representable} over a field $\bF$ is there is a matrix over $\bF$ with columns indexed by $E$ so that the elements of $\cB$ are precisely those subsets of columns that form a basis of the column space.
While not every matroid is representable, much of the standard matroid terminology is inspired by this connection to linear algebra.
The \emph{rank} $r(X)$ of a set $X \subseteq E$ is the cardinality of a maximum subset of $X$ contained in a basis.
A \emph{loop} of $M$ is an element that is not in any basis, 
and a \emph{coloop} is an element that is in every basis.
A \emph{circuit} of $M$ is a minimal set of elements that does not appear in any basis.
A \emph{point} of $M$ is a maximal rank-$1$ set, and a \emph{line} of $M$ is a maximal rank-$2$ set.
The \emph{closure} $\cl_M(X)$ of a set $X \subseteq E$ is the maximal set of rank $r(X)$ that contains $X$.
Non-loop elements of $M$ are \emph{parallel} if they are not in a common basis, and the equivalence classes under this relation are the \emph{parallel classes} of $M$.
Restricting the ground set of $M$ to consist of one element from each parallel class gives a new matroid $\si(M)$ called the \emph{simplification} of $M$.
A matroid is \emph{simple} if it is equal to its simplification.
The \emph{deletion} of a non-coloop element $e$ from $M$ is the matroid $M \del e = (E - e, \cB')$, where $\cB' = \{B \in \cB \colon e\notin B\}$, and the \emph{contraction} of a non-loop element $e$ from $M$ is the matroid $M / e = (E - e, \cB'')$, where $\cB'' = \{B - e  \colon e\in B \in \cB\}$.
If $e$ is a coloop, we define $M\del e$ to be $M/e$, and if $e$ is a loop, we define $M/e$ to be $M \del e$.
A matroid $N$ is a \emph{minor} of $M$ if it can be obtained from $M$ by a sequence of deletions and contractions.
Reordering the sequence of deletions and contractions does not change the minor, so for disjoint sets $C, D\subseteq E$ we write $M\con C\del D$ for the matroid obtained from $M$ by contracting the elements in $C$ and deleting the elements in $D$.
We write $M|X$ for $M\del (E - X)$, the \emph{restriction} of $M$ to $X$.
The \emph{dual} of $M = (E, \cB)$ is the matroid $M^*$ on ground set $E$ whose bases are the complements of bases of $M$.
Finally, the \emph{direct sum} of matroids $M_1 = (E_1, \cB_1)$ and $M_2 = (E_2, \cB_2)$ with disjoint ground sets is the matroid $M_1 \oplus M_2 = (E_1 \cup E_2, \cB)$, where $\cB = \{B_1 \cup B_2 \colon B_1 \in \cB_1 \textrm{ and } B_2 \in \cB_2\}$.

\subsection{Hypergraphs}
We will also use some standard notation for hypergraphs.
Given a hypergraph $\cH = (V, F)$, a \emph{subgraph} of $\cH$ is a hypergraph obtained from $\cH$ by deleting vertices and edges, and an \emph{induced subgraph} is a hypergraph obtained from $\cH$ by deleting vertices.
For a hypergraph $\cH = (V, F)$ and $W \subseteq V$ we write $\cH \del W$ for the hypergraph obtained by deleting the vertices in $W$, and we write $\cH/W$ for the \emph{link} of $\cH$ at $W$, which is the hypergraph with vertex set $V - W$ and edge set $\{X - W \colon W \subseteq X \in F\}$.
Note that if $\cH$ is the basis hypergraph of a matroid $M$, then $\cH \del v$ is the basis hypergraph of $M \del v$ (if $v$ is a non-coloop of $M$), and $\cH/v$ is the basis hypergraph of $M/v$ (if $v$ is a non-loop of $M$).
We write $|\cH|$ for the number of edges of $\cH$.
We will refer to $r$-uniform hypergraphs as \emph{$r$-graphs} and to $2$-graphs as graphs.

\subsection{Extremal problems for matroids and hypergraphs}
We now explain how Problem \ref{prob: main problem} arises by restricting Tur\'an parameters to matroid basis hypergraphs.
Given an $r$-graph $\cH$, let $\ex(n,\cH)$ be the maximum number of edges of an $n$-vertex $r$-graph with no subgraph isomorphic to $\cH$.
Then let 
$\pi(\cH) =  \lim_{n\to \infty}\ex(n, \cH)/\binom{n}{r}$; it is well-known that this limit exists.
The parameters $\ex(n, \cH)$ and $\pi(\cH)$ are the \emph{Tur\'an number} and \emph{Tur\'an density}, respectively, of $\cH$.

When $r = 2$, Tur\'an's classical theorem~\cite{Turan} shows that $\ex(n, K_t) \le (1 - \frac{1}{t-1})\binom{n}{2}$ for all integers $n$ and $t$ with $n, t\ge 3$, with equality for balanced complete $(t-1)$-partite graphs, which implies that $\pi(K_t) = 1 - \frac{1}{t-1}$ for all $t \ge 3$.
When $r \ge 3$, determining Tur\'an numbers and densities for cliques becomes much more difficult.
Amazingly, $\pi(K^{(s)}_t)$ is not known for any $t > s \ge 3$, where $K^{(s)}_t$ is the complete $s$-uniform graph on $t$ vertices.
We direct the reader to Keevash's comprehensive survey \cite{Keevash} and the more recent survey of Balogh, Clemen, and Lidick\'y \cite{BaloghClemenLidicky} for details on past results for hypergraph Tur\'an densities.

In 2011, Bollab\'as, Leader, and Malvenuto  \cite{BollabasLeaderMalvenuto} defined a natural generalization of $K_t^{(s)}\!\!\!\!$.
For all integers $r,s,t$ with $r,t  \ge s \ge 1$, an \emph{$(r,s,t)$-daisy}, denoted $\cD_r(s,t)$, is an $r$-uniform hypergraph with vertex set $S \cup T$ and edge set $\{S \cup X \colon X \in T^{(s)}\}$, where $S$ is an $(r - s)$-element set, $T$ is a $t$-element set disjoint from $S$, and $T^{(s)}$ is the set of all $s$-element subsets of $T$.
The set $S$ is the \emph{stem} and the sets in $T^{(s)}$ are the \emph{petals}.
Note that $\cD_s(s, t)$ is simply $K^{(s)}_t$, and as $r$ grows with $s$ and $t$ fixed, $\cD_r(s,t)$ becomes sparser because the number of edges remains constant.
Motivated by vertex Tur\'an problems on the hypercube, Bollob\'as, Leader, and Malvenuto conjectured that $\lim_{r \to \infty} \pi(\cD_r(s,t)) = 0$ for all integers $s$ and $t$ with $t \ge s \ge 1$. 
(It is straightforward to show that $\lim_{r\to \infty}\pi(\cD_r(s,t))$ exists; see Propositions~\ref{prop: deletion} and~\ref{prop: contraction}.)  
Ellis, Ivan, and Leader \cite{EllisIvanLeader} recently disproved this conjecture by showing that $\lim_{r \to \infty} \cD_r(2,4) \ge \prod_{i=1}^{\infty}(1 - 2^{-i}) \approx 0.29$, which implies that $\lim_{r \to \infty} \cD_r(s,t) \gtrsim 0.29$ whenever $s \ge 2$ and $t \ge s + 2$. In fact they proved something more general.

\begin{theorem}[Ellis, Ivan, and Leader \cite{EllisIvanLeader}] \label{thm: ellis-ivan-leader prime power lower bound}
If $t$ is a prime power, then 
$$\lim_{r \to \infty}\pi(\cD_r(2, t+2)) \ge \prod_{i=1}^{\infty}(1 - t^{-i}).$$
\end{theorem}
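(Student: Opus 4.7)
The plan is to use a projective geometry construction. Since $t$ is a prime power, the matroid $M = \PG{r-1}{t}$ exists; it has rank $r$, ground set of size $n_0 := (t^r - 1)/(t-1)$, and every line has exactly $t+1$ points. I would take the basis hypergraph of $M$ (and its parallel blowups) as the $\cD_r(2,t+2)$-free $r$-graph and then compute its density.

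First, I would verify that the basis hypergraph of $M$ contains no copy of $\cD_r(2, t+2)$. Suppose for contradiction such a daisy exists, with stem $S$ of size $r-2$ and petal vertices $v_1,\dots,v_{t+2}$. Since every set $S\cup\{v_i,v_j\}$ is a basis, $S$ is independent and each pair $\{v_i, v_j\}$ is independent in the rank-$2$ contraction $N := M/S$, so the $v_i$ lie in pairwise distinct parallel classes of $N$. But $N$ is a rank-$2$ minor of $\PG{r-1}{t}$, hence representable over $\bF_t$, so its simplification embeds in $\PG{1}{t}$, which has only $t+1$ points; this contradicts having $t+2$ pairwise non-parallel elements.

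Next I would count the bases of $M$: the number of ordered $r$-tuples of linearly independent nonzero vectors in $\bF_t^r$ is $\prod_{i=0}^{r-1}(t^r - t^i)$, and each unordered basis of $M$ arises from exactly $r!(t-1)^r$ such tuples, giving $b(M) = \prod_{i=0}^{r-1}(t^r - t^i)/(r!(t-1)^r)$. To produce hypergraphs on arbitrarily many vertices for fixed $r$, I would use the $k$-fold parallel blowup $M_k$; it has $kn_0$ elements and $k^r b(M)$ bases, and remains daisy-free because no two petals of a hypothetical daisy could be parallel copies of the same element of $M$ (else they would be parallel in $M_k$ and fail to extend $S$ to a basis), so the daisy would descend to one in $M$. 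Letting $k\to\infty$ gives
\[
\pi(\cD_r(2,t+2)) \;\ge\; \lim_{k\to\infty}\frac{b(M_k)}{\binom{kn_0}{r}} \;=\; \frac{r!\,b(M)}{n_0^r} \;=\; \frac{\prod_{j=1}^{r}(1 - t^{-j})}{(1 - t^{-r})^{r}},
\]
where the last step uses the factorization $\prod_{i=0}^{r-1}(t^r - t^i) = t^{r^2}\prod_{j=1}^{r}(1-t^{-j})$ together with $(t^r-1)^r = t^{r^2}(1-t^{-r})^r$. Since $(1-t^{-r})^r \to 1$ as $r \to \infty$, sending $r\to\infty$ yields the stated bound.

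The main obstacle is conceptual rather than computational: one must recognise that a $\cD_r(2,t+2)$ inside a basis hypergraph corresponds precisely to $t+2$ pairwise non-parallel elements in a single rank-$2$ contraction minor. Once this identification is made, the line-size bound in $\PG{r-1}{t}$ rules out such a configuration immediately, and the rest reduces to routine counting plus a blowup argument. The same perspective explains why the prime-power hypothesis appears at all — it is exactly what guarantees the existence of $\PG{r-1}{t}$ and the sharp line-size bound of $t+1$.
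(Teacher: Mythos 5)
Your proof is correct and follows the same projective-geometry construction that the paper attributes to Ellis, Ivan, and Leader and then reinterprets matroidally: take the parallel blowup of $\PG{r-1}{t}$, observe that no rank-$2$ minor of an $\bF_t$-representable matroid can contain $t+2$ pairwise non-parallel elements, and compute the density. The only presentational difference is that you verify daisy-freeness by hand (stem independent, petals pairwise non-parallel in $M/S$, line-size bound in projective space), whereas the paper packages exactly this reasoning into its general daisy/uniform-minor correspondence (Corollary~\ref{cor: uniform matroid minors and daisy subgraphs}) together with the standard fact that $\bF_t$-representable matroids are $U_{2,t+2}$-minor-free; the content is identical. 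Your algebra, including $b(M) = \prod_{i=0}^{r-1}(t^r-t^i)/(r!(t-1)^r)$ and the simplification $r!\,b(M)/n_0^r = \prod_{j=1}^r(1-t^{-j})/(1-t^{-r})^r$, checks out, and the passage to $r\to\infty$ is justified since $\pi(\cD_r(2,t+2))$ is monotone in $r$ and $(1-t^{-r})^r \to 1$.
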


For all integers $r$ and $n$ with $r \ge 2$ and $n = k\cdot \frac{t^r - 1}{t-1}$ for a positive integer $k$, they consider the $n$-vertex $r$-uniform hypergraph whose vertices are points in $r$-dimensional projective space over the finite field $\bF_t$ with $k$ copies of each vector, and whose edges are bases, and they show that this hypergraph has no $\cD_r(2,t+2)$-subgraph.

We now present an attractive matroidal reinterpretation of the construction by Ellis, Ivan, and Leader. First, we relate matroid minors to hypergraph suspensions (also known as hypergraph augmentations), see e.g.~\cite{Alon,BaloghHalfpapLidickyPalmer,Mukherjee}.
For any $s$-uniform hypergraph $\cH$ and integer $r \ge s$, the \emph{rank-$r$ suspension} of $\cH$, denoted $\cS_r(\cH)$, is the $r$-uniform hypergraph with vertex set $S \cup V(\cH)$ with $|S| = r -s$ and edge set  $\{S \cup X \colon X \in E(\cH)\}$.  The set $S$ is the \emph{stem} of $\cS_r(\cH)$. We see that $\cD_r(s, t) = \cS_r(K_t^{(s)})$.

For an $n$-element rank-$s$ matroid $N$ with basis hypergraph $\cH$ and an integer $r \ge s$, define $\mathsf{F}_r(N)$ to be the set of all $r$-uniform hypergraphs $\mathcal{K}$ on $n+r-s$ vertices that contain $\cS_r(\cH)$ with stem $S$ as a subgraph, with the property that whenever an edge $e$ of $\mathcal{K}$ contains $S$, then $e$ is an edge of $\cS_r(\cH)$ (this is equivalent to the link of $\mathcal{K}$ at $S$ being a copy of $\mathcal{H}$).

\begin{proposition}\label{prop: suspension_free_minor_free}
Let $r \ge s \ge 1$ be integers and let $N$ be a rank-$s$ matroid. For every rank-$r$ matroid $M$ with basis hypergraph $\cF$, $M$ has an $N$-minor if and only if $\cF$ has an induced subgraph belonging to $\mathsf{F}_r(N)$. If, in addition, $N$ is a uniform matroid, then $\cF$ contains $\cS_r(\cH)$ as a subgraph if and only if $M$ has an $N$-minor.
\end{proposition}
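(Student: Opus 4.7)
The plan is to build a correspondence between pairs $(C,D)$ witnessing an $N$-minor of $M$ and pairs $(V',S)$ witnessing a copy of $\cS_r(\cH)$ in $\cF$, via $C = S$ and $D = E(M) \setminus V'$. The key observation driving both directions is that any edge of $\cS_r(\cH)$ appearing in $\cF$ with stem $S$ forces $S$ to be contained in some basis of $M$, so $S$ is independent and $r(M/S) = r - |S| = s$ whenever $|S| = r - s$.

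For the first statement, I would proceed as follows. Suppose $M$ has an $N$-minor, and fix disjoint $C,D \subseteq E(M)$ with an isomorphism $\varphi : M/C \setminus D \to N$. Set $S = C$ and $V' = E(M) \setminus D$, and use $\varphi$ to identify $V' \setminus S$ with $V(\cH) = E(N)$. The definition of contraction/deletion yields
\begin{equation*}
\cB(M/C \setminus D) = \{B \setminus C : B \in \cB(M),\ C \subseteq B \subseteq V'\},
\end{equation*}
which after applying $\varphi$ becomes the statement that the edges of $\cF[V']$ containing $S$ correspond exactly to the edges of $\cH$. This is precisely the defining pair of conditions for $\cF[V'] \in \mathsf{F}_r(N)$. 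Conversely, given $\cF[V'] \in \mathsf{F}_r(N)$ with stem $S$, set $C = S$ and $D = E(M) \setminus V'$; the two conditions in the definition of $\mathsf{F}_r(N)$, read backward, reproduce the displayed equation, giving an isomorphism $M/C \setminus D \cong N$.

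For the second statement, the forward direction is immediate from the first part since an induced subgraph is in particular a subgraph. For the converse, assume $N = U_{s,t}$, so $\cH = K_t^{(s)}$, and suppose $\cF$ contains $\cS_r(\cH)$ as a subgraph with stem $S$ and petals on a $t$-set $T$. Take $C = S$ and $D = E(M) \setminus (S \cup T)$; then $r(M/C \setminus D) \le s$, and since every $s$-subset $X \subseteq T$ satisfies $S \cup X \in \cF$, every $s$-subset of $T$ is a basis of $M/C \setminus D$. Hence $M/C \setminus D$ is a rank-$s$ matroid on $t$ elements in which every $s$-subset is independent, i.e., $M/C \setminus D \cong U_{s,t}$. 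The main conceptual point, and the reason uniformity is needed to drop the \emph{induced} hypothesis, is the following: a non-induced copy of $\cS_r(\cH)$ in $\cF$ only guarantees the inclusion $\cB(N) \subseteq \cB(M/C \setminus D)$, but for $N = U_{s,t}$ this inclusion together with the rank bound $r(M/C \setminus D) \le s$ already forces $M/C \setminus D \cong U_{s,t}$, whereas for a general $N$ extra bases of $M/C \setminus D$ through $S$ could prevent the minor from being isomorphic to $N$.
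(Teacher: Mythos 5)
Your proof is correct and takes essentially the same approach as the paper, setting up the same dictionary between minors and induced subgraphs (contracted set $\leftrightarrow$ stem, deleted set $\leftrightarrow$ complement of $V'$) and using the same observation that a stem contained in a basis is automatically independent. One small omission in the forward direction: you should state that $C$ may be chosen independent and $D$ coindependent (this is exactly \cite[Lemma 3.3.2]{Oxley2011}, which the paper cites), since both your displayed description of $\cB(M/C \setminus D)$ and the required equality $|C| = r - s$ depend on it; beyond that, your handling of the uniform case is, if anything, slightly more careful than the paper's, as you make explicit the rank bound and the fact that a rank-$s$ matroid on $t$ elements with every $s$-subset a basis must be $U_{s,t}$.
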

\begin{proof}
    First, suppose that $M$ has an $N$-minor. Then there are disjoint sets $E_1, E_2 \subseteq E(M)$ so that $M\del E_1/E_2 = N$ and $E_2$ is independent in $M$ \cite[Lemma 3.3.2]{Oxley2011}.
    Since $E_2$ is independent, the bases of $N$ are precisely the subsets of $E(N)$ whose union with $E_2$ is a basis of $M$.
    Equivalently, the link of $\cF\del E_1$ at $E_2$ is a copy of $\cH$.
    Therefore, the restriction of $\cF$ to the vertex set $E(M) \del E_1$ contains a copy of $\cS_r(\cH)$ with stem $E_2$. Therefore the induced subgraph of $\cF$ corresponding to $E(M) \del E_1$ belongs to $\mathsf{F}_r(N)$.

    Now suppose $\cF$ has an induced subgraph $\cD$ belonging to $\mathsf{F}_r(N)$. Then $\cD$ contains $\cS_r(\cH)$ as a subgraph with a stem $S$. Deleting the vertices outside of $\cD$ and taking the link at $S$ gives $\cH$. These operations correspond to deleting elements of $M$ outside of $\cD$ and contracting the elements in $S$ to obtain a copy of $N$. That is, $M$ has an $N$-minor. 

    When $N$ is uniform, it suffices to argue that if $\cF$ has $\cS_r(\cH)$ as a subgraph, then $M$ has $N$ as a minor. Indeed, if $S$ is the stem of $\cS_r(\cH)$, deleting the vertices outside $\cS_r(\cH)$ and taking the link at $S$ yields $\cH$. 
\end{proof}

An application of Proposition~\ref{prop: suspension_free_minor_free} to uniform matroids shows that a rank-$r$ matroid is $U_{s,t}$-minor-free if and only its basis hypergraph does not have the daisy $\cD_r(s,t) = \cS_r(K_t^{(s)})$ as a subgraph.

\begin{corollary} \label{cor: uniform matroid minors and daisy subgraphs}
Let $r, s, t$ be integers with $r,t \ge s \ge 1$, and let $\cH$ be the basis hypergraph of a matroid $M$.
Then $\cH$ has $\cD_r(s,t)$ as a subgraph if and only if $M$ has a $U_{s,t}$-minor.
\end{corollary}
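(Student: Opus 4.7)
The plan is to obtain this corollary as an immediate consequence of Proposition~\ref{prop: suspension_free_minor_free}, specialized to the case where the forbidden matroid is uniform. The only things to verify are that the basis hypergraph of $U_{s,t}$ is exactly $K_t^{(s)}$, and that the suspension $\cS_r(K_t^{(s)})$ is literally the daisy $\cD_r(s,t)$ as defined earlier in the excerpt.

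First, I would observe that $U_{s,t}$ has ground set of size $t$ and its bases are precisely the $s$-element subsets, so its basis hypergraph $\cH$ coincides with $K_t^{(s)}$. Next, comparing the definition of the rank-$r$ suspension $\cS_r(\cH)$ (vertex set $S \cup V(\cH)$ with $|S|=r-s$ and edges $\{S \cup X : X \in E(\cH)\}$) with the definition of the daisy $\cD_r(s,t)$ (vertex set $S \cup T$ with $|S|=r-s$, $|T|=t$, and edges $\{S \cup X : X \in T^{(s)}\}$), we see that $\cS_r(K_t^{(s)}) = \cD_r(s,t)$.

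Finally, since $U_{s,t}$ is uniform, the second (``in addition'') conclusion of Proposition~\ref{prop: suspension_free_minor_free} applies directly: $\cF$ contains $\cS_r(\cH) = \cD_r(s,t)$ as a subgraph if and only if $M$ has a $U_{s,t}$-minor. Writing $\cF$ as $\cH$ in the statement of the corollary (the basis hypergraph of $M$) yields exactly the desired equivalence.

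There is essentially no obstacle here; the proof is a one-line application of the proposition, and the only care needed is to verify the two identifications above (basis hypergraph of $U_{s,t}$ equals $K_t^{(s)}$, and suspension of $K_t^{(s)}$ equals $\cD_r(s,t)$). Both are routine from the definitions.
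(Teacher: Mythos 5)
Your proof is correct and is essentially the same argument the paper uses: the corollary is presented immediately after Proposition~\ref{prop: suspension_free_minor_free} as a direct specialization, using exactly the two identifications you verify (the basis hypergraph of $U_{s,t}$ is $K_t^{(s)}$, and $\cS_r(K_t^{(s)}) = \cD_r(s,t)$) together with the ``in addition'' clause of that proposition for uniform $N$.
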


Therefore Problem~\ref{prob: main problem} arises by restricting hypergraph Tur\'an parameters to matroid basis hypergraphs.
From this perspective, $\cD_r(2,t+2)$-subgraph-freeness of the hypergraph constructed by Ellis, Ivan, and Leader is immediate, as the associated matroid is $\bF_t$-representable and therefore has no $U_{2,t+2}$-minor (see e.g.\ \cite[Corollary~6.5.3]{Oxley2011}).
Moreover, Ellis, Ivan, and Leader show that the same hypergraph is $\cD_r(t, t+2)$-subgraph-free in \cite[Proposition 1]{EllisIvanLeader_smallhypercubes}.
From the matroid perspective, this follows immediately from the fact that $U_{t,t+2}$ is the dual matroid of $U_{2, t+2}$, and the class of $\bF_t$-representable matroids is closed under duality.

\section{Matroid basis hypergraphs} \label{sec: matroid hypergraphs}

In this section we will show that the limits of Problem \ref{prob: main problem} exist, and describe -- in terms of forbidden substructures -- how the class of rank-$r$ matroid basis hypergraph sits within the class of all $r$-uniform hypergraphs.

\subsection{Density}

Given families $\Q$ and $\sfH$ of hypergraphs, let $\ex_{\Q}(n, r, \sfH)$ be the maximum number of edges of an $r$-uniform, $n$-vertex, hypergraph in $\Q$ with no subgraph in $\sfH$. 
We set $\ex_{\Q}(n, r, \sfH) = 0$ if no such hypergraph exists.
Furthermore, we allow $\sfH$ to be empty; in this case, $\ex_{\Q}(n, r, \varnothing)$ simply denotes the maximum number of edges of an $r$-uniform, $n$-vertex, hypergraph in $\Q$.
Let $\pi_{\Q}(r, \sfH) = \lim_{n\to \infty}\ex_{\mathsf Q}(n, r, \sfH)/\binom{n}{r}$, if this limit exists.
Note that when $\Q$ is the class of $r$-uniform hypergraphs and $\sfH = \{\cH\}$ for an $r$-graph $\cH$, then $\ex_{\Q}(n, r, \sfH)$ is equal to the classical Tur\'an number $\ex(n, \cH)$.
We will show that $\pi_{\Q}(r, \sfH)$ exists whenever $\Q$ is closed under vertex deletion.
The following argument is standard, but we include it for completeness.

\begin{proposition} \label{prop: deletion}
Let $\Q$ and $\sfH$ be families of hypergraphs, and let $r$ be an integer with $r \ge 1$.
If $\Q$ is closed under vertex deletion, then $\pi_{\Q}(r, \sfH)$ exists.
\end{proposition}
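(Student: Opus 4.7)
The plan is to run the classical Katona--Nemetz--Simonovits averaging argument, which works verbatim once one verifies that closure under vertex deletion is the only structural property of $\mathsf{Q}$ that is needed. Set $f(n) = \mathrm{ex}_{\mathsf{Q}}(n,r,\mathsf{H})/\binom{n}{r}$. I will show that $f$ is non-increasing in $n$ (for $n > r$), which, combined with $0 \le f(n) \le 1$, yields convergence.

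First I would fix $n > r$ and let $\cH$ be an extremal $r$-graph on vertex set $V$ of size $n$: it lies in $\mathsf{Q}$, contains no subgraph in $\mathsf{H}$, and has exactly $\mathrm{ex}_{\mathsf{Q}}(n,r,\mathsf{H})$ edges. For each $v\in V$, the hypergraph $\cH\del v$ has $n-1$ vertices and belongs to $\mathsf{Q}$ by the closure hypothesis; being a subgraph of $\cH$, it is also $\mathsf{H}$-free. Hence $|\cH\del v| \le \mathrm{ex}_{\mathsf{Q}}(n-1,r,\mathsf{H})$. Summing over all $v\in V$ and noting that each edge of $\cH$ is present in exactly $n-r$ of the hypergraphs $\cH\del v$, I get
\[
(n-r)\,\mathrm{ex}_{\mathsf{Q}}(n,r,\mathsf{H}) \;=\; \sum_{v\in V}|\cH\del v| \;\le\; n\,\mathrm{ex}_{\mathsf{Q}}(n-1,r,\mathsf{H}).
\]

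Next I would divide by $\binom{n}{r}$ and use the identity $\binom{n-1}{r}/\binom{n}{r} = (n-r)/n$ to turn the inequality into $f(n) \le f(n-1)$. Since $f(n)\in[0,1]$ and the sequence is monotone, the limit $\pi_{\mathsf{Q}}(r,\mathsf{H}) = \lim_{n\to\infty} f(n)$ exists. The only minor edge cases are when $n \le r$ (where $\binom{n}{r}=0$ or $f$ is trivially defined) and when no $n$-vertex hypergraph of $\mathsf{Q}$ exists for small $n$; these are handled by the convention $\mathrm{ex}_{\mathsf{Q}}(n,r,\mathsf{H})=0$ stated just before the proposition, which keeps the averaging inequality valid once $n$ is large enough that $\mathsf{Q}$ contains some $r$-graph on $n$ vertices (and otherwise the limit is trivially $0$).

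There is essentially no obstacle here: the argument is a one-line counting swap, and the only place the hypothesis on $\mathsf{Q}$ is used is to guarantee that $\cH\del v\in\mathsf{Q}$, so that $|\cH\del v|$ may be bounded by $\mathrm{ex}_{\mathsf{Q}}(n-1,r,\mathsf{H})$. In particular, the class $\mathsf{M}$ of matroid basis hypergraphs is closed under vertex deletion (vertex deletion on the basis hypergraph corresponds to matroid deletion, including the coloop case via the convention that $M\del e = M/e$ when $e$ is a coloop), so Proposition~\ref{prop: deletion} applies and guarantees that $\pi_{\mathsf{M}}(r, U_{s,t})$ exists. The ``link'' density $\lim_{r\to\infty}\pi_{\mathsf{M}}(r, U_{s,t})$ is then the subject of the companion Proposition~\ref{prop: contraction}, which I would expect to prove by a dual averaging over single-element contractions.
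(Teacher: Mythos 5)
Your proof is correct and is essentially the paper's proof: both run the standard averaging/double-counting over single-vertex deletions, use closure of $\mathsf{Q}$ under vertex deletion to put $\cH \del v$ back into $\mathsf{Q}$, and conclude that $f(n)=\ex_{\mathsf{Q}}(n,r,\mathsf{H})/\binom{n}{r}$ is non-increasing and hence convergent. The paper phrases it as ``the average density over deletions equals $f(n)$, so some deletion has density at least $f(n)$,'' while you sum $|\cH\del v| \le \ex_{\mathsf{Q}}(n-1,r,\mathsf{H})$ and divide; these are the same counting identity read in opposite directions, and your handling of the degenerate $\ex_{\mathsf{Q}}=0$ case matches the paper's.
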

\begin{proof}
We will show that the sequence $\{{\ex_{\Q}(n, r, \sfH)}/{\binom{n}{r}} \colon n \ge r\}$ is monotone decreasing.
Let $n \ge r+1$.
If $\ex_{\Q}(n, r, \sfH) = 0$, then $\ex_{\Q}(m, r, \sfH) = 0$ for all $m \ge n$ because $\Q$ is closed under vertex deletion.
So let $\cF \in \Q$ be an $n$-vertex $r$-graph with no subgraph in $\sfH$ and with $|\cF| = \ex_{\Q}(n, r, \sfH)$.
By computing the average edge density of all single-vertex deletions of $\cF$, we see that 
\begin{align*}
\frac{1}{n} \sum_{v \in V(\cF)}\frac{|\cF\del v|}{\binom{n-1}{r}}
=\frac{1}{n} \cdot \frac{(n-r)|\cF|}{\binom{n-1}{r}} 
=\frac{|\cF|}{\binom{n}{r}} = \frac{\ex_{\Q}(n, r, \sfH)}{\binom{n}{r}}.
\end{align*}
Therefore there is some vertex $v$ of $\cF$ so that $|\cF\del v|/\binom{n-1}{r} \ge \ex_{\Q}(n, r, \sfH)/\binom{n}{r}$. As $\Q$ is closed under vertex deletion,  $\cF\del v \in \Q$, thus $\ex_{\Q}(n-1, r, \sfH)/\binom{n-1}{r} \ge \ex_{\Q}(n, r, \sfH)/\binom{n}{r}$, as desired.
\end{proof}

The next result uses the notion of hypergraph suspension as defined in Section \ref{sec: matroid introducton}. 
Given a family $\sfH$ of $s$-graphs and an integer $r \ge s$, let $\cS_r(\sfH) = \{\cS_r(\cH) \colon \cH \in \sfH\}$.

\begin{proposition} \label{prop: contraction}
Let $\Q$ be a family of hypergraphs and let $\sfH$ be a family of $s$-graphs.
If $\Q$ is closed under vertex deletion and taking links, then $\lim_{r\to\infty}\pi_{\Q}(r, \cS_r(\sfH))$ exists.
\end{proposition}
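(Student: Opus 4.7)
The plan is to show that the sequence $\{\pi_{\Q}(r, \cS_r(\sfH))\}_{r \ge s}$ is monotone decreasing in $r$, which together with the obvious lower bound of $0$ will yield convergence. Existence of each individual limit $\pi_{\Q}(r, \cS_r(\sfH))$ is already handed to us by Proposition~\ref{prop: deletion}, since $\Q$ is closed under vertex deletion.

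The key observation is that for any $r$-graph $\cF$ containing no subgraph in $\cS_r(\sfH)$, and any vertex $v \in V(\cF)$, the link $\cF/v$ contains no subgraph in $\cS_{r-1}(\sfH)$. Indeed, if $\cS_{r-1}(\cH)$ occurred as a subgraph of $\cF/v$ with stem $S$ (where $\cH \in \sfH$), then by definition of the link, the edges of this copy correspond to edges of $\cF$ containing $v$, so $\cS_r(\cH)$ would appear as a subgraph of $\cF$ with stem $S \cup \{v\}$. Moreover, $\cF/v \in \Q$ because $\Q$ is closed under taking links.

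Now I would run the averaging argument just as in Proposition~\ref{prop: deletion}, but over links instead of vertex deletions. Fix $n$ and let $\cF \in \Q$ be an extremal $n$-vertex $r$-graph with no subgraph in $\cS_r(\sfH)$. Since each edge of $\cF$ is counted $r$ times in $\sum_{v} |\cF/v|$, we get $\sum_v |\cF/v| = r|\cF|$, so some vertex $v$ satisfies $|\cF/v| \ge r|\cF|/n$. Combined with the observation above, this gives
\[
\ex_{\Q}(n,r,\cS_r(\sfH)) \;\le\; \frac{n}{r}\,\ex_{\Q}(n-1,r-1,\cS_{r-1}(\sfH)).
\]
Dividing both sides by $\binom{n}{r} = \frac{n}{r}\binom{n-1}{r-1}$ yields
\[
\frac{\ex_{\Q}(n,r,\cS_r(\sfH))}{\binom{n}{r}} \;\le\; \frac{\ex_{\Q}(n-1,r-1,\cS_{r-1}(\sfH))}{\binom{n-1}{r-1}}.
\]
Letting $n \to \infty$ and invoking Proposition~\ref{prop: deletion} on both sides gives $\pi_{\Q}(r, \cS_r(\sfH)) \le \pi_{\Q}(r-1, \cS_{r-1}(\sfH))$. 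The sequence is bounded below by $0$, so the limit exists.

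There is no real obstacle here; the argument is a standard link-averaging twin of Proposition~\ref{prop: deletion}. The only point requiring any care is verifying the subgraph-lift from $\cS_{r-1}(\cH) \subseteq \cF/v$ to $\cS_r(\cH) \subseteq \cF$, which is immediate from how the stem of the suspension is defined.
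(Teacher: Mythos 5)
Your proposal is correct and takes essentially the same route as the paper: average the link density over all vertices, find a vertex whose link has density at least that of $\cF$, observe that $\cF/v$ stays in $\Q$ and avoids $\cS_{r-1}(\sfH)$, then take $n\to\infty$ to get $\pi_\Q(r,\cS_r(\sfH)) \le \pi_\Q(r-1,\cS_{r-1}(\sfH))$. You are a bit more explicit than the paper about the lift from $\cS_{r-1}(\cH)\subseteq\cF/v$ to $\cS_r(\cH)\subseteq\cF$, which is a helpful clarification but not a different argument.
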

\begin{proof}
We will show that the sequence $\{\pi_{\Q}(r, \cS_r(\sfH)) \colon r \ge s\}$ is monotone decreasing.
Fix some $r \ge s+1$ and $n \ge r+1$.
If $\pi_{\Q}(r, \cS_r(\sfH)) = 0$, then $\pi_{\Q}(t, \cS_t(\sfH)) = 0$ for all $t \ge r$ because $\Q$ is closed under taking links.
So let $\cF \in \Q$ be an $n$-vertex $r$-graph with no subgraph in $\cS_r(\sfH)$ and with $|\cF| = \ex_{\Q}(n, \cS_r(\sfH))$.
By computing the average basis density of all link graphs of $\cF$, we see that 
\begin{align*}
\frac{1}{n} \sum_{v \in  V(\cF)}\frac{|\cF/ v|}{\binom{n-1}{r-1}}
=\frac{1}{n} \cdot \frac{r|\cF|}{\binom{n-1}{r-1}} 
=\frac{|\cF|}{\binom{n}{r}} = \frac{\ex_{\Q}(n, \cS_r(\cH))}{\binom{n}{r}}.
\end{align*}
 Therefore there is some vertex $v$ of $\cF$ so that $|\cF/v|/\binom{n-1}{r-1} \ge \ex_{\Q}(n, \cS_r(\sfH))/\binom{n}{r}$.
In particular, $v$ is in at least one edge, so $\cF/v$ is a nonempty $(r-1)$-graph.
Since $\cF/v \in \Q$ because $\Q$ is closed under taking links, it follows that $\ex_{\Q}(n-1, r, \cS_{r-1}(\sfH))/\binom{n-1}{r} \ge \ex_{\Q}(n, r, \cS_r(\sfH))/\binom{n}{r}$.
This implies that the sequence $\{\pi_{\Q}(r, \cS_r(\sfH)) \colon r \ge s\}$ is monotone decreasing, as desired.
\end{proof}

We have the following corollary for matroid basis hypergraphs.
\begin{corollary} \label{cor: matroid basis densities exist}
If $N$ is a matroid, then $\pi_{\M}(r, N)$ exists for all integers $r \ge 1$, and $\lim_{r \to\infty}\pi_{\M}(r, N)$ exists.
\end{corollary}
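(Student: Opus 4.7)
The plan is to obtain both assertions by transferring the averaging arguments of Propositions~\ref{prop: deletion} and~\ref{prop: contraction} from the ``no subgraph in $\sfH$'' setting to the ``$N$-minor-free'' setting. Two observations enable this transfer. First, as noted just after Proposition~\ref{prop: suspension_free_minor_free}, the hypergraph operations $\cF\del v$ and $\cF/v$ realize matroid deletion and contraction on the basis-hypergraph level (with the understanding that when $v$ is a coloop or a loop, the resulting empty hypergraph is still the basis hypergraph of some matroid), so the class $\M$ is closed under vertex deletion and taking links. Second, being $N$-minor-free is preserved by matroid deletion and contraction, since every minor of $M\del v$ or $M/v$ is also a minor of $M$. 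Together these mean that the averaging proofs in Section~\ref{sec: matroid hypergraphs} go through verbatim with ``no subgraph in $\sfH$'' replaced by ``no $N$-minor.''

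For the existence of $\pi_{\M}(r,N)$, I would repeat the argument of Proposition~\ref{prop: deletion}. Let $\cF$ be the basis hypergraph of an $n$-vertex, rank-$r$, $N$-minor-free matroid attaining $\ex_{\M}(n,r,N)$. The double-counting identity
\[
\frac{1}{n}\sum_{v\in V(\cF)}\frac{|\cF\del v|}{\binom{n-1}{r}} = \frac{\ex_{\M}(n,r,N)}{\binom{n}{r}}
\]
produces a vertex $v$ whose deletion $\cF\del v$ lies in $\M$, is $N$-minor-free, and attains at least the average density. This shows that the sequence $\{\ex_{\M}(n,r,N)/\binom{n}{r}\}_{n\ge r}$ is monotone nonincreasing; since it is bounded below by $0$, it converges to $\pi_{\M}(r,N)$.

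For the existence of $\lim_{r\to\infty}\pi_{\M}(r,N)$, I would analogously mimic Proposition~\ref{prop: contraction}. The corresponding double-counting over links produces a vertex $v$ (necessarily a non-loop whenever $\ex_{\M}(n,r,N)>0$) such that $\cF/v$ is the basis hypergraph of a rank-$(r-1)$, $N$-minor-free matroid on $n-1$ elements, and
\[
\frac{|\cF/v|}{\binom{n-1}{r-1}} \ge \frac{\ex_{\M}(n,r,N)}{\binom{n}{r}}.
\]
Taking $n\to\infty$ yields $\pi_{\M}(r-1,N)\ge\pi_{\M}(r,N)$, so $\{\pi_{\M}(r,N)\}_{r\ge 1}$ is nonincreasing and bounded below by $0$, and the claimed limit exists.

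The whole argument is essentially bookkeeping built on Propositions~\ref{prop: deletion} and~\ref{prop: contraction}, and I do not anticipate any substantive obstacle. The only care required is to confirm that the vertex selected by each averaging step yields a matroid of the expected rank: in the deletion case this is automatic, and in the link case it is automatic whenever the extremal basis count is positive, since an averaging vertex $v$ with $|\cF/v|>0$ must lie in a basis and hence is not a loop.
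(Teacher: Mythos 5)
Your proposal is correct and takes essentially the same route as the paper. The paper simply invokes Propositions~\ref{prop: deletion} and~\ref{prop: contraction} with $\Q$ taken to be the class of basis hypergraphs of $N$-minor-free matroids and $\sfH=\varnothing$, observing that this $\Q$ is closed under vertex deletion and taking links because the class of $N$-minor-free matroids is closed under deletion and contraction; you re-run the averaging computations instead, but rest on the identical closure observation, and your handling of the loop/coloop edge cases mirrors the paper's.
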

\begin{proof}
Let $\cM$ be the class of $N$-minor-free matroids, and let $\Q$ be the class of basis hypergraphs of matroids in $\cM$.
Then $\pi_{\M}(r, N) = \pi_{\Q}(r, \varnothing)$.
Note that $\Q$ is closed under vertex deletion and taking links because $\cM$ is closed under deletion and contraction.
Therefore $\pi_{\M}(r, N)$ exists for all $r \ge 1$ by Proposition~\ref{prop: deletion}, and $\lim_{r \to\infty}\pi_{\M}(r, N)$ exists by Proposition~\ref{prop: contraction}.
\end{proof}

Note that the same proof shows that if $\cM$ is any minor-closed class of matroids and $\Q$ is the class of basis hypergraphs of matroids in $\cM$, then $\pi_{\Q}(r, \varnothing)$ and $\lim_{r\to\infty}\pi_{\M}(r, \varnothing)$ both exist.
In this case we write $\pi_{\cM}(r)$ for $\pi_{\Q}(r, \varnothing)$.

\subsection{Forbidden substructures}\label{sec: forbidden substructures}
Since we will be specializing problems for $r$-uniform hypergraphs to the class $\M$ of matroid basis hypergraphs, we will briefly discuss how $\M$ fits into the class of all uniform hypergraphs.
Since $\M$ is closed under vertex deletion and taking links, it is natural to ask for a characterization of $\M$ via a list of hypergraphs that are not in $\M$ but have every vertex deletion and every link in $\M$.
This list was found by Cordovil, Fukuda, and Moreira \cite{CordovilFukudaMoreira}; it consists of three $4$-vertex graphs, and the infinite family of $2$-edge $r$-graphs whose vertex set is the disjoint union of the two edges.

It is also natural to ask for a characterization of the class $\M$ by forbidden induced subgraphs.
It is straightforward to show that for each $r \ge 2$, the class of $r$-uniform matroid basis hypergraphs can be characterized by a finite collection of forbidden induced subgraphs; we omit the proof.

\begin{proposition} \label{prop: matroidal forbidden induced subgraph}
An $r$-uniform hypergraph is a matroid basis hypergraph if and only if every induced subgraph with at most $2r$ vertices is a matroid basis hypergraph.
\end{proposition}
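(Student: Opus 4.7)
The plan is to exploit the fact that the basis-exchange axiom (B1) is quantified only over pairs of bases $B_1, B_2$, so any single application involves at most $2r$ ground-set elements. Consequently, (B1) is a property that one can check on the induced subgraphs of size at most $2r$, which is exactly what the proposition asserts.

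For the forward direction, I would observe that if $\cH$ is the basis hypergraph of a rank-$r$ matroid $M$, then for any $W \subseteq V(\cH)$ with $|W| \le 2r$, the edges of $\cH[W]$ are precisely the bases of $M$ contained in $W$. When this family is non-empty it coincides with the basis family of the restriction $M|W$, which is then itself a rank-$r$ matroid, so $\cH[W]$ is a matroid basis hypergraph; the cases in which $W$ contains no basis of $M$ (so $\cH[W]$ has no edges) are handled by convention.

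For the reverse direction, which carries the substantive content, I would verify (B1) for $\cH$ directly. Given edges $B_1, B_2$ of $\cH$ and an element $x \in B_1 - B_2$, set $S = B_1 \cup B_2$, so that $|S| \le 2r$; by hypothesis $\cH[S]$ is a matroid basis hypergraph. Since $B_1$ and $B_2$ are both edges of $\cH[S]$ and $x \in B_1 - B_2$, applying basis exchange inside $\cH[S]$ furnishes a $y \in B_2 - B_1$ with $(B_1 - x) \cup y$ an edge of $\cH[S]$, and hence of $\cH$. That $\cH$ itself has at least one edge is verified by applying the hypothesis to the induced subgraph on the vertices of any single edge.

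I expect no substantive obstacle in this proof: it is essentially a direct translation of the axiom (B1) into a statement about induced subgraphs of size at most $2r$. The only mild subtlety is in handling degenerate empty-edge induced subgraphs, which I would absorb into a careful choice of conventions.
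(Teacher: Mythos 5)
Your proof is correct, and it takes the approach one would expect; note that the paper explicitly omits its own proof of this proposition, so there is no argument in the text to compare against. Both directions of your argument are sound: in the forward direction you correctly identify an induced subgraph $\cH[W]$ having at least one edge as the basis hypergraph of the restriction $M|W$ (which has rank $r$ precisely because $W$ contains a basis of $M$), and in the reverse direction the key observation is exactly right --- the basis-exchange axiom involves only the $\le 2r$ elements of $B_1 \cup B_2$, so it can be verified inside the induced subgraph $\cH[B_1 \cup B_2]$.

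Two small points worth tightening. First, the sentence asserting that ``$\cH$ itself has at least one edge is verified by applying the hypothesis to the induced subgraph on the vertices of any single edge'' is circular as phrased: you cannot select an edge to restrict to before knowing one exists. But the issue is harmless. Non-emptiness of the basis family is a separate axiom from (B1), and the clean way to handle it is to note that if $\cH$ has no edges then neither side of the equivalence holds (or both hold, depending on the convention for edgeless hypergraphs), so one may as well assume $\cH$ has an edge from the outset. Second, you correctly observe that induced subgraphs on fewer than $r$ vertices (or, more generally, on vertex sets containing no basis) are edgeless, and you wave this away ``by convention.'' This is the right instinct --- the proposition is implicitly about non-empty induced subgraphs, consistent with the paper's usage of ``non-empty $r$-graph'' in the proof of the neighboring Proposition on counting forbidden induced subgraphs --- but spelling out the convention explicitly would make the argument airtight: declare that the edgeless hypergraph is vacuously admitted, or simply restrict the quantifier to induced subgraphs with at least one edge. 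With either patch your proof is complete.
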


Let $\mathsf{F}$ and $\mathsf{H}$ denote families of $r$-uniform hypergraphs, and let $\ex(n,\{\mathsf{H},\mathsf{F}\ind\})$ denote the maximum number of edges in an $n$-vertex $r$-uniform hypergraph containing no subgraph isomorphic to a hypergraph belonging to $\mathsf{H}$ and no induced subgraph isomorphic to a hypergraph belonging to $\mathsf{F}$.  If $N$ is a rank-$s$ uniform matroid on $t$ elements, then its basis hypergraph is $K_t^{(s)}$. An immediate consequence of Corollary~\ref{cor: uniform matroid minors and daisy subgraphs} and Proposition~\ref{prop: matroidal forbidden induced subgraph} is for $r \ge s$, 
\[ \ex_{\M}(n,r,N) = \ex(n,\{\cD_r(s, t) ,\mathsf{F}\ind\}),\]
for an appropriate family $\mathsf{F}$ of $r$-uniform hypergraphs. In the case when $H$ and $F$ are graphs, the problem of determining $\ex(n,\{H,F\ind\})$ was introduced by Loh, Tait, Timmons, and Zhou \cite{LohPoShenTaiTimmonsZhou} in 2018  and has received significant attention since.

While the number of forbidden induced subgraphs for $r$-uniform matroid basis hypergraphs is finite for all $r$, it grows exponentially as a function of $r$.
Write $B_n$ for the $n$-th Bell number, which counts the number of partitions of an $n$-element set, and write $m(n,2)$ for the number of rank-$2$ matroids on an $n$-element set. While better asymptotics for $B_n$ are known, we will use only the trivial bound $B_n \le n^n$. Every rank-2 matroid on $[n]$ can be described as a set partition of $[n+1]$ (in which the class containing $n+1$ denotes the set of loops), so $m(n,2) \le B_{n+1} \le (n+1)^{n+1}$.

\begin{proposition} \label{prop: exponentially many induced subgraphs}
The class of $r$-uniform matroid basis hypergraphs has at least $2^{r^2/2 - O(r\log(r))}$ non-isomorphic forbidden induced subgraphs on $r+2$ vertices.
\end{proposition}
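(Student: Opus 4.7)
The plan is a counting argument: I will lower bound the total number of non-isomorphic $r$-uniform hypergraphs on $r+2$ vertices, upper bound the number of those that are matroid basis hypergraphs using matroid duality, and observe that the first quantity swamps the second. Since $r+2 \le 2r$ for $r \ge 2$, Proposition~\ref{prop: matroidal forbidden induced subgraph} guarantees that every $r$-uniform hypergraph on $r+2$ vertices that fails to be a matroid basis hypergraph is a forbidden induced subgraph for the class $\M$.

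First, I would count $r$-uniform hypergraphs on a fixed vertex set of size $r+2$. There are $\binom{r+2}{r}=\binom{r+2}{2}$ potential edges, hence $2^{\binom{r+2}{2}}$ labeled $r$-graphs on $[r+2]$. Dividing by the trivial upper bound $(r+2)!$ on the size of each isomorphism class yields at least $2^{\binom{r+2}{2}}/(r+2)!$ non-isomorphic $r$-graphs on $r+2$ vertices. Taking $\log_2$, this count is at least $\binom{r+2}{2} - \log_2((r+2)!) = r^2/2 + O(r) - O(r\log r) = r^2/2 - O(r\log r)$.

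Second, I would bound the number of non-isomorphic rank-$r$ matroid basis hypergraphs on $r+2$ vertices by the total number of labeled rank-$r$ matroids on $[r+2]$. Here the key trick is matroid duality: taking complements of bases gives a bijection between rank-$r$ matroids on $[r+2]$ and rank-$2$ matroids on $[r+2]$. So this count is at most $m(r+2,2) \le (r+3)^{r+3} = 2^{O(r \log r)}$, as recorded in the paragraph just before the proposition.

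Subtracting, the number of non-isomorphic forbidden induced subgraphs on $r+2$ vertices is at least
\[
\frac{2^{\binom{r+2}{2}}}{(r+2)!} - (r+3)^{r+3} \;\ge\; 2^{r^2/2 - O(r\log r)} - 2^{O(r\log r)} \;=\; 2^{r^2/2 - O(r\log r)}.
\]
The argument is essentially routine counting; the only non-trivial ingredient is the use of duality to replace the unwieldy count of rank-$r$ matroids on $r+2$ elements with the much more tractable count $m(r+2,2)$, for which the paper already supplies the bound $(r+3)^{r+3}$. No step looks like a real obstacle—I expect the main care to be in bookkeeping the $O(r\log r)$ error terms so that the asymptotic claim is cleanly justified.
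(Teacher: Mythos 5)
There is a genuine gap in how you justify that every non-matroidal $r$-graph on $r+2$ vertices is actually a \emph{forbidden} induced subgraph (i.e., a \emph{minimal} non-member of the class). You appeal to Proposition~\ref{prop: matroidal forbidden induced subgraph}, but that proposition only establishes that every forbidden induced subgraph for $\M$ has at most $2r$ vertices; it does not say that every small non-member is itself minimal. For a hypergraph $\cH$ on $r+2$ vertices with $\cH \notin \M$ to be a forbidden induced subgraph, you must additionally know that every \emph{proper} induced subgraph of $\cH$ (on at most $r+1$ vertices) \emph{is} in $\M$. Nothing you cite establishes this, and it is not automatic.

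This is precisely the small but essential observation that the paper's proof supplies before the counting: every non-empty $r$-graph on at most $r+1$ vertices is a matroid basis hypergraph. On at most $r$ vertices there is at most one edge, so the basis-exchange axiom (B1) holds vacuously; on $r+1$ vertices any two edges $F_1, F_2$ satisfy $|F_1 - F_2| = |F_2 - F_1| = 1$, and the required exchanges simply produce $F_1$ and $F_2$ again, so (B1) again holds. Once this is in place, your counting argument (which is otherwise the same as the paper's: count all labeled $r$-graphs on $[r+2]$, subtract the number of rank-$r$ matroids on $[r+2]$, which by duality equals $m(r+2,2) \le (r+3)^{r+3}$, and divide by $(r+2)!$) goes through and gives the claimed bound. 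So the fix is to replace the appeal to Proposition~\ref{prop: matroidal forbidden induced subgraph} by this direct verification that all non-empty $r$-graphs on at most $r+1$ vertices lie in $\M$.
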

\begin{proof}
We first show that every non-empty $r$-graph $\cH$ with at most $r+1$ vertices is in $\M$.
If $\cH$ has at most $r$ vertices, then it has at most one edge, so its edges trivially satisfy the basis exchange axiom.
So we may assume that $\cH$ has $r+1$ vertices.
Let $F_1$ and $F_2$ be edges of $\cH$.
Note that $|F_1 - F_2| = |F_2 - F_1| = 1$.
Let $v_1 \in F_1 - F_2$ and let $v_2 \in F_2 - F_1$.
Since $(F_1 - v_1) \cup v_2$ and $(F_2 - v_2) \cup v_1$ are both edges (the former is $F_2$ and the latter is $F_1$), the edges of $\cH$ satisfy the basis exchange axiom, so $\cH \in \M$.

Consequently, an $r$-graph $\cH$ on $r+2$ vertices is a forbidden induced subgraph if and only if it is not in $\M$.
The number of $r$-graphs in $\M$ on $r+2$ vertices is equal to the number of $(r+2)$-element matroids of rank~$r$, which, by matroid duality, is equal to the number of $(r+2)$-element matroids of rank~$2$. It follows that the number $f_r$ of non-isomorphic forbidden induced subgraphs on $r+2$ vertices satisfies
\begin{equation*}
	f_r \ge \frac{1}{(r+2)!}\left(2^{\binom{r+2}{r}}-m(r+2,2)\right) \ge \frac{1}{(r+2)!}\left(2^{\binom{r+2}{r}}-(r+3)^{r+3}\right).
\end{equation*}
The proposition then follows from Stirling's approximation.
\end{proof}

The proof of Proposition~\ref{prop: exponentially many induced subgraphs} shows that, as $r\to\infty$, almost every $r$-graph on $r+2$ vertices is a forbidden induced subgraph for the class of matroidal hypergraphs. The actual number of forbidden induced subgraphs is likely much larger than this lower bound, because the proof only considers $r$-graphs on $r+2$ vertices, while forbidden induced subgraphs can have up to $2r$ vertices.

Finally, we remark that a typical $r$-graph is not the basis hypergraph of a matroid, in the sense that
\begin{equation*}
    \lim_{n \to \infty} \frac{m(n,r)}{2^{\binom{n}{r}}} = 0
    \qquad
    \text{for all $r \ge 2$},
\end{equation*}
where $m(n,r)$ is the number of $r$-uniform matroid basis hypergraphs on a given set of $n$ vertices. For $r = 2$, this follows immediately from the above observation that $m(n,2) \le (n+1)^{n+1}$, while for $r \ge 3$ this follows from the bound $\log m(n,r) = O\left(\frac{\log n}{n}\binom{n}{r}\right)$ as $n\to\infty$, see e.g.~\cite{BansalPendavinghVanderPol2014}.

\section{$U_{1, t}$ and $U_{2, t}$} \label{sec: U_{1, t} and U_{2,t}}

In this section we consider $\pi_{\M}(r,U_{s, t})$ with $s \in \{1,2\}$, and prove Theorem \ref{thm: binary daisies}. 
We begin with $\pi_{\M}(r,U_{1,t})$, which is straightforward to calculate.

\begin{proposition} \label{prop: s = 1}
Let $r \ge 1$ and $t \ge 2$ be integers. Then
\[\ex_{\M}(n,r,U_{1,t}) \le (t-1)^r, \]
with equality when $t-1$ divides $n$. 
\end{proposition}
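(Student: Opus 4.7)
My plan is to prove the upper bound by induction on the rank $r$, combining a standard incidence identity for bases with a bound on the number of non-loop elements of $M$.

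The base case $r=1$ is immediate: the non-loops of $M$ form a single parallel class, and $U_{1,t}$-minor-freeness forces this class to have at most $t-1$ elements, so $b(M) \le t-1$.

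For $r \ge 2$, I start from the identity
\[
    r \cdot b(M) \;=\; \sum_{e \text{ non-loop}} b(M/e),
\]
which comes from counting pairs $(B, e)$ with $B \in \cB(M)$ and $e \in B$. Each $M/e$ is rank-$(r-1)$ and $U_{1,t}$-minor-free, so by induction $b(M/e) \le (t-1)^{r-1}$. The key step is then to show that $M$ has at most $r(t-1)$ non-loop elements. I would do this by fixing a basis $B=\{e_1,\ldots,e_r\}$ and introducing, for each $i$, the cocircuit $C_i^* = E - \cl_M(B-e_i)$. Contracting the independent set $B-e_i$ makes $C_i^*$ into the single parallel class of non-loops of the rank-$1$ matroid $M/(B-e_i)$, so $U_{1,t}$-minor-freeness yields $|C_i^*| \le t-1$. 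Every non-loop of $M$ lies in some $C_i^*$: an element $e_j \in B$ lies in $C_j^*$ since $B$ is independent, while for a non-loop $e \notin B$, any $e_j$ in the fundamental circuit $C(e,B) \cap B$ (nonempty because $|C(e,B)| \ge 2$) satisfies $e \notin \cl_M(B-e_j)$ by basis exchange. Hence the non-loops of $M$ are covered by $\bigcup_{i=1}^r C_i^*$, a set of size at most $r(t-1)$. Substituting both bounds into the incidence identity yields $b(M) \le (t-1)^r$.

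For the matching construction when $(t-1) \mid n$ and $n$ is sufficiently large (specifically $n \ge r(t-1)$), the direct sum of $r$ copies of $U_{1,t-1}$ with $n - r(t-1)$ loops adjoined has rank $r$, exactly $(t-1)^r$ bases, and every minor is a direct sum of matroids $U_{1,\ell}$ with $\ell \le t-1$, so it is $U_{1,t}$-minor-free. The main obstacle in the proof is the cocircuit covering bound $N_0 \le r(t-1)$ on the number of non-loops; once that is in hand, the rest is a routine double-count combined with the inductive hypothesis.
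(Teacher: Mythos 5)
Your proof is correct, but it takes a genuinely different route from the paper's. The paper picks a single largest cocircuit $C^*$, uses $U_{1,t}$-minor-freeness to get $|C^*|\le t-1$, observes that every basis meets $C^*$, and concludes $b(M)\le\sum_{e\in C^*} b(M/e)\le (t-1)\cdot(t-1)^{r-1}$ in one step. You instead fix a basis $B$, build the $r$ fundamental cocircuits $C_i^*=E-\cl_M(B-e_i)$, show each has size at most $t-1$ and that together they cover all non-loops, and then feed the resulting bound of $r(t-1)$ non-loops into the exact double count $r\cdot b(M)=\sum_{e\ \text{non-loop}} b(M/e)$. Both arguments are sound; the paper's is shorter because it never needs more than one cocircuit nor the exact incidence identity, whereas yours goes through a covering argument. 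Your approach does buy something extra: it explicitly establishes the (tight) structural fact that a rank-$r$, $U_{1,t}$-minor-free matroid has at most $r(t-1)$ non-loop elements, which the paper's one-cocircuit argument does not extract. One small remark on the equality part: you correctly note that the extremal construction requires $n\ge r(t-1)$; the paper's statement ``with equality when $t-1$ divides $n$'' is implicitly assuming $n$ large enough for the construction to fit, and your explicit mention of that threshold is a slight improvement in precision.
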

\begin{proof}
We proceed by induction on $r$.
A matroid of rank $1$ with no $U_{1,t}$-restriction has at most $t-1$ non-loop elements, and therefore at most $t-1$ bases. Thus the bound holds when $r = 1$, establishing the base case of the induction.
Now let $r > 1$ and assume that the claim holds for all rank-$(r-1)$ matroids. Let $M$ be a rank-$r$ matroid with no $U_{1,t}$-minor and let $C^*$ be a largest cocircuit (a circuit of the dual) of $M$. As $M$ has no $U_{1,t}$-minor, we have $|C^*| \le t-1$. Furthermore, as $C^*$ is a cocircuit, every basis of $M$ contains an element of $C^*$ (see \cite[Proposition 2.1.19]{Oxley2011}) and $C^*$ contains no loops of $M$.
For each element $e \in C^*$, as $e$ is not a loop of $M$, the matroid $M/e$ has rank $r-1$ and therefore at most $(t-1)^{r-1}$ bases, by induction.
Since $|C^*| \le t-1$, it follows that $M$ has at most $(t-1)(t-1)^{r-1} = (t-1)^r$ bases, as desired.
Finally, we show that the bound is sharp when $t - 1$ divides $n$.
If $t - 1$ divides $n$, let $M$ be the union of $r$ parallel classes with cardinality $t - 1$ so that the bases of $M$ are the transversals of these classes, i.e., $M$ is a parallel extension of an $r$-element independent set. 
Then $M$ has $n$ elements, rank $r$, and exactly $(t-1)^r$ bases.
\end{proof}

Proposition~\ref{prop: s = 1} implies that $\pi_{\M}(r, U_{1,t}) = 0$ for all $t  \ge 2$ and $r \ge 1$. We comment that this already follows from the stronger result that $\ex(n, \cD_r(1,t)) \le \frac{t-1}{n-r+1}\binom{n}{r}$. To see why the latter result holds, note that if $\cH$ is an $r$-uniform hypergraph with $n$ vertices that does not contain $\cD_r(1,t)$ as a subgraph, then every $r-1$ vertices of $\cH$ are contained in at most $t-1$ edges; it follows that $\cH$ has at most $\binom{n}{r-1}\frac{t-1}{r}$ edges, from which the result follows.

We next turn our attention to $\pi_{\M}(r,U_{2,t})$. 
We will rely on the following theorem of Kung~\cite{Kung1993}.

\begin{theorem}[Kung \cite{Kung1993}]\label{thm: Kung}
For all integers $r$ and $t$ with $r,t\ge 2$, every simple rank-$r$ matroid with no $U_{2,t+2}$-minor has at most $\frac{t^r - 1}{t - 1}$ elements.
\end{theorem}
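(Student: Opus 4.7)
The plan is to induct on the rank $r$. For the base case $r = 2$, every simple rank-$2$ matroid with $n$ elements is isomorphic to $U_{2,n}$; the assumption that $M$ has no $U_{2,t+2}$-minor (equivalently here, no $U_{2,t+2}$-restriction) forces $n \le t+1 = \tfrac{t^2-1}{t-1}$, matching the desired bound.

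For the inductive step, assume $r \ge 3$ and that the bound holds in rank $r-1$. Fix any $e \in E(M)$; since $M$ is simple, $e$ is not a loop, so $M/e$ has rank $r-1$. The class of $U_{2,t+2}$-minor-free matroids is closed under both contraction and simplification, so $\si(M/e)$ is a simple, rank-$(r-1)$, $U_{2,t+2}$-minor-free matroid. The inductive hypothesis then yields $|\si(M/e)| \le \tfrac{t^{r-1}-1}{t-1}$.

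The main geometric step I would then use is as follows. Because $M$ is simple, the rank-$2$ flats of $M$ through $e$ (the ``long lines'' through $e$) partition $E(M) - e$, and each such line corresponds to a unique parallel class of $M/e$, hence to a point of $\si(M/e)$. For any such line $L$, the restriction $M|L$ is a simple rank-$2$ matroid with no $U_{2,t+2}$-minor, so by the base case $|L| \le t + 1$, and therefore each parallel class of $M/e$ has at most $t$ elements. Combining the two bounds,
\[|E(M)| \;\le\; 1 + t \cdot |\si(M/e)| \;\le\; 1 + t \cdot \frac{t^{r-1} - 1}{t - 1} \;=\; \frac{t^r - 1}{t - 1},\]
which closes the induction.

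The only step that requires genuine care is the bijection between points of $\si(M/e)$ and rank-$2$ flats of $M$ through $e$: two elements $f, g \in E(M) - e$ are parallel in $M/e$ exactly when $r_M(\{e,f,g\}) \le 2$, i.e., when they lie on a common line through $e$, and simplicity of $M$ guarantees that these lines genuinely partition $E(M) - e$. Once this correspondence is set up, the rest is bookkeeping, and the argument never uses that $t$ is a prime power, so it works for any integer $t \ge 2$.
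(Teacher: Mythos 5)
Your proof is correct and complete. The paper itself states Theorem~\ref{thm: Kung} as a citation to Kung~\cite{Kung1993} and gives no proof, so there is nothing internal to compare against; the argument you give (induct on rank, fix a point $e$, observe that the long lines through $e$ partition $E(M)-e$ and biject with the points of $\si(M/e)$, each such line carrying at most $t$ further elements because a simple rank-$2$ restriction with no $U_{2,t+2}$-restriction has at most $t+1$ points) is exactly the standard ``count lines through a point'' proof that Kung gives, and all the small checks you flag (the parallel-class/line correspondence, the partition of $E(M)-e$, no prime-power hypothesis needed) are handled correctly. The identity $1 + t\cdot\tfrac{t^{r-1}-1}{t-1} = \tfrac{t^r-1}{t-1}$ closes the induction as stated.
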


For general matroids, Theorem~\ref{thm: Kung} implies that any matroid forbidding $U_{2,t+2}$ as a minor has a bounded number of parallel classes.
In general, $U_{s,t}$-minor-free matroids need not have this property.  
For example, one can take a direct sum of $r/2$ copies of $U_{2, \ell}$ to obtain a simple rank-$r$ matroid with $r \cdot \ell$ points and no $U_{s,t}$-minor with $s \ge 3$ and $t \ge 5$.

For $r \ge 1$ and $t\ge 2$, define
\begin{equation}\label{eq:b_definition}
    b(r,t) = \frac{\prod_{i=0}^{r-1}(t^r- t^i)}{r! \cdot (t-1)^r} = \frac{\prod_{i=0}^{r-1}(\frac{t^r - 1}{t-1} - \frac{t^i - 1}{t-1})}{r!}.
\end{equation}

When $t$ is a prime power, $b(r,t)$ is the number of bases of the rank-$r$ projective geometry $\PG{r-1}{t}$ (see e.g.~\cite[Proposition 6.1.4]{Oxley2011}). 
Noting that $b(1,t) = 1$ for all $t$, the following recursion holds for all $r \ge 2$ and $t \ge 2$: 
\begin{equation}\label{eq:b_recursion}
    b(r,t) = \frac{b(r-1,t)\cdot t^{r-1}(t^r - 1)}{r\cdot (t-1)}.
\end{equation}

Our calculation of $\pi_{\M}(r, U_{2, t+2})$ will rely on the theory of hypergraph Lagrangians.
The \emph{Lagrangian polynomial} of a hypergraph $\mathcal{H}$ with vertex set $[n]$ is the polynomial 
\[ p_{\cH}(x_1,\ldots,x_n) = \sum_{e \in E(\mathcal{H})} \prod_{i \in e} x_i.\]
The \emph{Lagrangian} of a hypergraph $\mathcal{H}$ with vertex set $[n]$, denoted $\lambda(\cH)$, is the maximum value of $p_{\cH}(x_1,\ldots,x_n)$ over the standard simplex $\{x_i \ge 0 : \sum_i x_i = 1\}$. By continuity, as the standard simplex is bounded and closed, $\lambda(\cH)$ is well-defined.
For a matroid $M$ with basis hypergraph $\cH$, we write $p_{M}$ for $p_{\cH}$ and $\lambda(M)$ for $\lambda(\cH)$. 

An immediate observation is that $\lambda(M) = \lambda(\si(M))$ for any matroid $M$ on ground set $[s]$. Clearly deleting a loop does not change $\lambda(M)$, because loops are not in any bases, and if elements $1$ and $2$ of $M$ are parallel, then $p_M(x_1, x_2, x_3, \dots, x_s) = p_{M \del 1}(x_1 + x_2, x_3, \dots, x_s)$ because $1 \cup J \in \cB(M)$ if and only if $2 \cup J \in \cB(M)$ for any $J \subseteq [s]$. 
The following lemma relates derivatives of $p_M$ to matroid contraction.

\begin{lemma}\label{lem: Lagrangian gradient}
    Let $r \ge 2$, let $M$ be a rank-$r$ matroid on a ground set $[s]$ with  Lagrangian polynomial $p_M$, and let $i \in [s]$. Then
     \[\frac{\partial}{\partial x_i} p_M\left(x_1,\ldots, x_s\right) 
     \le (1 - x_i)^{r-1} \cdot \lambda(M/i)\]
     for all nonnegative reals $x_1,\ldots,x_s$ such that $\sum_j x_j = 1$.
\end{lemma}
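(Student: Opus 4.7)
The plan is to recognize the partial derivative $\partial p_M / \partial x_i$ as (up to a homogeneity rescaling) the Lagrangian polynomial of the contraction $M/i$, and then apply the defining inequality of $\lambda(M/i)$ on the standard simplex. The only substantive content is the identification; everything else is bookkeeping about homogeneity and edge cases.

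First, I would dispose of the case where $i$ is a loop: then no basis of $M$ contains $i$, so no monomial of $p_M$ involves $x_i$, forcing $\partial p_M/\partial x_i \equiv 0$, and the claim holds trivially because the right-hand side is nonnegative. So assume $i$ is a non-loop, in which case $M/i$ has rank $r-1$ and its bases are precisely the sets $B \setminus \{i\}$ for $B \in \cB(M)$ with $i \in B$. Differentiating term by term,
\begin{equation*}
    \frac{\partial p_M}{\partial x_i}(x_1,\ldots,x_s)
    \;=\; \sum_{\substack{B \in \cB(M) \\ i \in B}} \prod_{j \in B \setminus \{i\}} x_j
    \;=\; p_{M/i}(x_1,\ldots,\widehat{x_i},\ldots,x_s),
\end{equation*}
where the hat indicates that $x_i$ is omitted.

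Second, I would use that $p_{M/i}$ is homogeneous of degree $r-1$, because every basis of $M/i$ has exactly $r-1$ elements. If $x_i < 1$, set $y_j = x_j/(1-x_i)$ for $j \ne i$; then $y_j \ge 0$ and $\sum_{j \ne i} y_j = (1-x_i)/(1-x_i) = 1$, so
\begin{equation*}
    p_{M/i}(x_1,\ldots,\widehat{x_i},\ldots,x_s)
    \;=\; (1-x_i)^{r-1}\, p_{M/i}(y_1,\ldots,\widehat{y_i},\ldots,y_s)
    \;\le\; (1-x_i)^{r-1}\, \lambda(M/i),
\end{equation*}
by the definition of the Lagrangian on the simplex. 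The boundary case $x_i = 1$ forces $x_j = 0$ for $j \ne i$, and then both sides of the target inequality vanish (using $r-1 \ge 1$), so the claim holds there as well.

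There is no real obstacle: the argument is a one-step reduction from the derivative to a contraction-matroid Lagrangian, plus degree-$(r{-}1)$ homogeneity. The only care needed is to separate the loop case (where the identification of $\partial_i p_M$ with $p_{M/i}$ breaks because the paper's convention sets $M/i := M \del i$) from the generic case, and to note that the inequality holds on the boundary $x_i = 1$ for the trivial reason that the simplex degenerates to a single point at which both sides are zero.
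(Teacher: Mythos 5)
Your proof is correct and matches the paper's argument essentially line for line: dispose of the loop case, identify $\partial_i p_M$ with the Lagrangian polynomial of $M/i$, rescale by $(1-x_i)^{r-1}$ via homogeneity, and apply the definition of $\lambda(M/i)$. The only difference is that you explicitly flag the degenerate case $x_i = 1$, which the paper leaves implicit; this is a small but reasonable precaution.
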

    \begin{proof}
    If $i$ is a loop of $M$, then $p_M$ does not depend on $x_i$, so the left-hand side is equal to~0 and the claim holds trivially.
    So we may assume that $i$ is not a loop of $M$.
    In that case, the bases of $M$ that contain $i$ are in one-to-one correspondence with the bases of $M/i$.
    We see that
    \begin{align*}
        \frac{\partial}{\partial x_i} p_M\left(x_1,\ldots, x_s\right) &=
        \frac{\partial}{\partial x_i}\left( \sum_{B \in \cB(M)} \prod_{j \in B}x_j \right) = \sum_{B \in \cB(M)\colon i \in B} \prod_{j \in B - i}x_j \\
        &= \sum_{B' \in \cB(M/i)} \prod_{j \in B'}x_j = (1 - x_i)^{r-1} \cdot \sum_{B' \in \cB(M/i)} \prod_{j \in B'}\frac{x_j}{1 - x_i} \\
        &\le (1 - x_i)^{r-1} \cdot \lambda(M/i). \qedhere
    \end{align*}
    \end{proof}

Using Lemma~\ref{lem: Lagrangian gradient} and Theorem~\ref{thm: Kung}, we bound the Lagrangian of a $U_{2,t+2}$-minor-free matroid for $t \ge 2$. Our techniques for optimizing the Lagrangian are well-known and standard. For examples, see \cite{MotzkinStraus, Sidorenko78} and the survey paper by Keevash \cite{Keevash}.

\begin{theorem} \label{thm: Lagrangian upper bound}
    Let $t \ge 2$ and let $r \ge 1$ be integers. Let $M$ be a rank-$r$ $U_{2,t+2}$-free matroid. Then
    \[ \lambda(M) \le b(r,t)\left(\frac{t-1}{t^r - 1}\right)^r.\]
\end{theorem}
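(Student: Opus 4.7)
The plan is to prove the inequality by induction on $r$, using three ingredients: Lemma~\ref{lem: Lagrangian gradient}, Euler's identity for a degree-$r$ homogeneous polynomial, and Kung's bound (Theorem~\ref{thm: Kung}) on the number of points of a simple $U_{2,t+2}$-minor-free matroid. Throughout, set $s_r := \frac{t^r-1}{t-1}$, so the target bound reads $\lambda(M) \le b(r,t)/s_r^{\,r}$.

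The base case $r=1$ is immediate: $p_M$ is linear, so $\lambda(M) \le 1 = b(1,t)$. For the inductive step, observe first that $\lambda(M) = \lambda(\si(M))$ and that $\si(M)$ is also $U_{2,t+2}$-minor-free, so we may assume $M$ is simple; Kung's theorem then gives $|E(M)| \le s_r$. Let $x^*$ attain $\lambda(M)$ on the simplex, and let $I := \{i : x_i^* > 0\}$ with $s' := |I| \le s_r$. If $s' < r$, then every basis of $M$ meets $E(M)\setminus I$, so $p_M(x^*) = 0$ and the inequality is trivial; hence we may assume $s' \ge r \ge 2$.

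The heart of the argument is the following. The Karush--Kuhn--Tucker conditions on the simplex produce a multiplier $\mu$ with $\partial p_M/\partial x_i(x^*) = \mu$ for every $i \in I$, while Euler's identity (since $p_M$ is homogeneous of degree $r$) yields
\[
 r\lambda(M) \;=\; \sum_i x_i^*\, \frac{\partial p_M}{\partial x_i}(x^*) \;=\; \mu.
\]
For each $i \in I$, the matroid $M/i$ has rank $r-1$ and is $U_{2,t+2}$-minor-free, so combining Lemma~\ref{lem: Lagrangian gradient} with the inductive hypothesis $\lambda(M/i) \le b(r-1,t)/s_{r-1}^{r-1}$ gives
\[
r\lambda(M) \;=\; \frac{\partial p_M}{\partial x_i}(x^*) \;\le\; (1-x_i^*)^{r-1}\, \frac{b(r-1,t)}{s_{r-1}^{r-1}}.
\]
Writing $K := \bigl(r\lambda(M)\, s_{r-1}^{r-1}/b(r-1,t)\bigr)^{1/(r-1)}$, this rearranges to $x_i^* \le 1 - K$ for all $i \in I$. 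Summing and using $\sum_{i \in I} x_i^* = 1$ gives $1 \le s'(1-K)$, hence $K \le (s'-1)/s' \le (s_r - 1)/s_r$.

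The conclusion is then algebraic. The identity $s_r - 1 = t\,s_{r-1}$ (a direct computation from the definition of $s_r$) gives $(s_r - 1)^{r-1} = t^{r-1}\,s_{r-1}^{r-1}$; raising the previous inequality to the $(r-1)$-th power therefore produces
\[
r\lambda(M) \;\le\; \frac{b(r-1,t)\,(s_r - 1)^{r-1}}{s_{r-1}^{r-1}\, s_r^{r-1}} \;=\; \frac{b(r-1,t)\, t^{r-1}}{s_r^{r-1}},
\]
which by the recursion~\eqref{eq:b_recursion} equals $r\,b(r,t)/s_r^{\,r}$, closing the induction. The main subtlety is justifying the KKT step when $x^*$ lies on a proper face of the simplex; this is handled by working only with the active coordinates $I$, on which the Kung bound still gives $|I| \le s_r$, so the interior optimality conditions apply on that face.
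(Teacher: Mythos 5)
Your proof is correct and takes essentially the same approach as the paper: it combines Lemma~\ref{lem: Lagrangian gradient}, Euler's identity, Kung's bound (Theorem~\ref{thm: Kung}), and the recursion~\eqref{eq:b_recursion} in exactly the same way, with the same induction on $r$. The only difference is organizational: the paper runs a secondary induction on the ground-set size $s$ (with a separate base case $s=r$) to reduce to an interior maximizer and then applies a pigeonhole argument to pick one coordinate $j$ with $x_j^* \ge \tfrac{t-1}{t^r-1}$, whereas you restrict to the active set $I$ (justifying stationarity via KKT on the corresponding face) and sum the bound $x_i^*\le 1-K$ over $i\in I$ — a cleaner but logically equivalent way of making the same estimate.
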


\begin{proof}
    As $\lambda(M) = \lambda(\si(M))$, without loss of generality, we may suppose $M$ is simple. Let $s$ denote the number of elements of $M$. By  Theorem~\ref{thm: Kung}, $s \le  \frac{t^r - 1}{t - 1}$. Let $p(x_1,\ldots,x_s)$ be the Lagrangian polynomial of $M$. When $r = 1$, this clearly implies $\lambda(M) = 1$. If $s = r$, then by convexity we have $\lambda(M) = (1/r)^r$. Note then, 
    \begin{align*}
        \frac{r!}{r^r} = \prod_{i = 0}^{r-1} \left(1 - \frac{i}{r}\right) \le \prod_{i = 0}^{r-1} \left( 1 - \frac{t^i - 1}{t^r - 1} \right)
    \end{align*}
    where the inequality follows from the fact that the function $(t^x - 1)/x$ is increasing on $(0,\infty)$. By \eqref{eq:b_definition} and the above inequality, we have,
    \begin{align*}
        \lambda(M) = \left( \frac{1}{r} \right)^r \le \frac{1}{r!} \cdot \prod_{i = 0}^{r-1} \left( 1 - \frac{t^i - 1}{t^r - 1} \right) = \frac{1}{r!} \cdot \prod_{i = 0}^{r-1} \left(\frac{t^r - t^i}{t^r - 1} \right) = b(r,t)  \left(\frac{t-1}{t^r - 1} \right)^r.
    \end{align*}

    We proceed with induction on $r$ and $s$. Let $\mathbf{x}^* = (x_1^*,\ldots,x_s^*)$ maximize $p(x_1,\ldots,x_s)$ over the standard simplex. If $x^*_i = 0$, then the claim holds by induction on $s$. So we may suppose $x^*_i > 0$ for all $i \in [s]$. Thus  $\mathbf{x}^*$ belongs to the relative interior of the standard simplex, and as a consequence, the gradient of $p$ must be orthogonal to the hyperplane defined by $\sum_i x_i = 1$. In particular, there exists a constant $c \in \mathbb{R}$ such that $\frac{\partial}{\partial x_i} p(\mathbf{x}^*) = c$ for all $i \in [s]$. By Euler's identity for homogeneous functions, we have 
    \begin{align*}
            r \cdot p(\mathbf{x}^*) = \sum_{i = 1}^s x_i^* \frac{\partial}{\partial x_i} p(\mathbf{x}^*) = \sum_{i = 1}^s x_i^* c = c.
    \end{align*}
    By Lemma~\ref{lem: Lagrangian gradient}, for all $i \in [s]$, \[ c = \frac{\partial}{\partial x_i} p\left(\mathbf{x}^*\right) \le (1 - x_i^*)^{r-1} \lambda(M/i) = (1 - x_i^*)^{r-1}\lambda(\si(M/i)).\]  
    By the pigeonhole principle, there exists some $j$ such that $x_j \ge \frac{t-1}{t^r - 1}$. It follows by induction on $r$ and (\ref{eq:b_recursion}) that
    \begin{align*}
        c &\le \left(1 - \frac{t-1}{t^r - 1}\right)^{r-1} b(r-1,t) \left(\frac{t-1}{t^{r-1} - 1} \right)^{r-1} \\
        &= \left( \frac{t(t^{r-1} - 1)}{t^r - 1} \right)^{r-1} b(r-1,t) \left(\frac{t-1}{t^{r-1} - 1} \right)^{r-1}\\
        &= \left( \frac{t(t^{r-1} - 1)}{t^r - 1} \right)^{r-1}b(r,t) \frac{r (t - 1)}{t^{r-1}(t^r - 1)} \left(\frac{t-1}{t^{r-1} - 1} \right)^{r-1} = r\cdot b(r,t)\left(\frac{t-1}{t^r - 1} \right)^r.
    \end{align*}
    We have $r \cdot p(\mathbf{x}^*) \le r\cdot b(r,t)\left((t-1)/(t^r - 1) \right)^r$, which yields the claim.
\end{proof}

We comment that the computation for the Lagrangian for binary matroids was done implicitly by Alon \cite{Alon} via probabilistic techniques in the context of  bounding the size of erasure list-decodable codes. A classic result of Tutte~\cite{Tutte} states a matroid is binary (i.e.\ respresentable over $\bF_2$) if and only if it is $U_{2,4}$-minor free. Shortly after the submission of this manuscript, Ellis, Ivan, and Leader~\cite[Corollary 3]{EIL25} proved the same bound for $\bF_t$-representable matroids in the special case that $t$ is a prime power. It is well-known that $\bF_t$-representable matroids have  no $U_{2,t+2}$-minor, see~\cite[Corollary~6.5.3]{Oxley2011}.
Rota famously conjectured that for every finite field $\bF$ there is a finite list of excluded minors for $\bF$-representability \cite{Rota}, and a proof of this conjecture was announced by Geelen, Gerards, and Whittle \cite{GeelenGerardsWhittle2014} but is still in the process of being written.

We have the following corollary for the maximum number of bases of an $n$-element, rank-$r$ matroid with no $U_{2,t+2}$-minor.

\begin{theorem} \label{thm: finite field daisy matroid Turan density}
Let $n, r,t$ be integers with $r\ge 1$ and $t\ge 2$.Then
\[ \ex_{\M}(n,r,U_{2,t+2}) \le b(r, t) \cdot \left( \frac{n(t-1)}{t^r - 1} \right)^r.\]
 Furthermore, this bound is tight when $t$ is a prime power and $n$ is a multiple of $\frac{t^r - 1}{t-1}$.  
\end{theorem}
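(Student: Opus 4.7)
The plan is to deduce the upper bound from Theorem~\ref{thm: Lagrangian upper bound} by evaluating the Lagrangian polynomial at the uniform point, and then exhibit a sharp construction based on parallel extensions of the projective geometry $\PG{r-1}{t}$.

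For the upper bound, let $M$ be a rank-$r$ $U_{2,t+2}$-minor-free matroid on $n$ elements, and let $p_M$ be its Lagrangian polynomial on $n$ variables. Evaluating at the uniform point $(1/n, \ldots, 1/n)$, each basis contributes $(1/n)^r$, so
\begin{equation*}
    \frac{b(M)}{n^r} \;=\; p_M\!\left(\tfrac{1}{n},\ldots,\tfrac{1}{n}\right) \;\le\; \lambda(M) \;\le\; b(r,t)\left(\frac{t-1}{t^r - 1}\right)^r,
\end{equation*}
where the last inequality is Theorem~\ref{thm: Lagrangian upper bound}. Multiplying through by $n^r$ gives the claimed bound. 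This step is short; the heavy lifting has already been done in Theorem~\ref{thm: Lagrangian upper bound}.

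For the tightness claim, suppose $t$ is a prime power and $n = k \cdot \frac{t^r - 1}{t - 1}$ for a positive integer $k$. Take the rank-$r$ projective geometry $\PG{r-1}{t}$, which is $\bF_t$-representable and hence $U_{2,t+2}$-minor-free (by \cite[Corollary~6.5.3]{Oxley2011}), has exactly $\frac{t^r-1}{t-1}$ points, and has exactly $b(r,t)$ bases. Let $M$ be the matroid obtained by replacing each point of $\PG{r-1}{t}$ with $k$ parallel copies; since adding elements parallel to existing ones does not create a $U_{2,t+2}$-minor and does not change the simplification, $M$ is still $U_{2,t+2}$-minor-free. The bases of $M$ are in bijection with pairs (basis $B$ of $\PG{r-1}{t}$, choice of one of $k$ copies for each of the $r$ points in $B$), so
\begin{equation*}
    b(M) \;=\; b(r,t)\cdot k^r \;=\; b(r,t)\left(\frac{n(t-1)}{t^r-1}\right)^r,
\end{equation*}
matching the upper bound.

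The main obstacle is conceptual rather than technical: one must recognize that the natural evaluation point for the Lagrangian is the uniform distribution (which converts $b(M)$ into a multiple of $p_M$), and that the matching lower bound factors through a $k$-fold parallel extension of $\PG{r-1}{t}$ whose basis count has the clean product form $b(r,t)k^r$. Once these two observations are in place, both inequalities fall out immediately from results already in hand.
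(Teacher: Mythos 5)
Your proposal is correct and follows essentially the same approach as the paper: deducing the upper bound from Theorem~\ref{thm: Lagrangian upper bound} by evaluating the Lagrangian polynomial (the paper evaluates $p_{\si(M)}$ at the point of parallel-class proportions, which yields the same quantity $b(M)/n^r$ as your uniform-point evaluation of $p_M$), and exhibiting the $k$-fold parallel extension of $\PG{r-1}{t}$ as the tight construction.
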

\begin{proof}
Let $M$ be an $n$-element rank-$r$ matroid with no $U_{2,t+2}$-minor, and let $N = \si(M)$.
For each element $i$ of $N$, let $y_i$ be the number of elements of $M$ parallel to $i$, and let $x_i = y_i/n$.
Then 
\begin{align*}
    b(M) = \sum_{B \in \cB(N)} \prod_{i \in B}y_i = n^r \cdot \sum_{B \in \cB(N)} \prod_{i \in B}x_i \le n^r \cdot \lambda(N) \le n^r \cdot b(r, t) \cdot \left( \frac{t-1}{t^r - 1} \right)^r\!\!,
\end{align*}
where the last inequality holds by Theorem \ref{thm: Lagrangian upper bound}.
When $n$ is a multiple of $\frac{t^r - 1}{t - 1}$ and $t$ is a prime power, equality holds for the loopless $n$-element rank-$r$ matroid whose simplification is $\PG{r-1}{t}$ and whose parallel classes all have equal size.
\end{proof}

Since Theorem \ref{thm: finite field daisy matroid Turan density} is asymptotically tight whenever $t$ is a prime power, we have the following corollary.

\begin{corollary} \label{cor: density for D(2, q+2)}
For every integer $r \ge 2$ and every prime power $q$ we have $$\pi_{\M}(r,U_{2,q+2}) = b(r,q) \cdot \left(\frac{q-1}{q^r - 1}\right)^r \cdot r! = \prod_{i = 1}^{r-1}\left(1 - \frac{q^i-1}{q^r - 1}\right).$$
\end{corollary}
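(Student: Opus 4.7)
The plan is to derive the corollary directly from Theorem~\ref{thm: finite field daisy matroid Turan density} by dividing through by $\binom{n}{r}$ and passing to the limit, then simplify the resulting expression using the definition of $b(r,q)$ in \eqref{eq:b_definition}.

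First I would establish the upper bound. By Theorem~\ref{thm: finite field daisy matroid Turan density}, for all integers $n$ we have
\[
\frac{\ex_{\M}(n,r,U_{2,q+2})}{\binom{n}{r}} \;\le\; \frac{b(r,q)\left(\tfrac{n(q-1)}{q^r-1}\right)^r}{\binom{n}{r}}.
\]
Since $\binom{n}{r} = \frac{n^r}{r!}(1 + o(1))$ as $n \to \infty$, passing to the limit gives
\[
\pi_{\M}(r, U_{2,q+2}) \;\le\; b(r,q)\cdot\left(\frac{q-1}{q^r-1}\right)^{\!r}\!\cdot r!.
\]
For the matching lower bound, Theorem~\ref{thm: finite field daisy matroid Turan density} is tight along the subsequence of values of $n$ that are multiples of $\frac{q^r-1}{q-1}$ (this uses that $q$ is a prime power, so $\PG{r-1}{q}$ exists). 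The existence of $\pi_{\M}(r, U_{2,q+2})$ is guaranteed by Corollary~\ref{cor: matroid basis densities exist}, so the limit along this subsequence equals the full limit, yielding equality.

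Finally I would verify the second equality algebraically. Using the definition
\[
b(r,q) = \frac{\prod_{i=0}^{r-1}(q^r - q^i)}{r!\,(q-1)^r}
\]
from \eqref{eq:b_definition}, the factors of $r!$ and $(q-1)^r$ cancel, leaving
\[
b(r,q)\cdot\left(\frac{q-1}{q^r-1}\right)^{\!r}\!\cdot r! \;=\; \prod_{i=0}^{r-1}\frac{q^r - q^i}{q^r - 1} \;=\; \prod_{i=0}^{r-1}\left(1 - \frac{q^i - 1}{q^r - 1}\right).
\]
The $i=0$ factor equals $1$ and can be dropped, so this equals $\prod_{i=1}^{r-1}\bigl(1 - \tfrac{q^i-1}{q^r-1}\bigr)$, as claimed.

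There is no real obstacle here: the entire content is in Theorem~\ref{thm: finite field daisy matroid Turan density}, and the corollary is a routine limit computation plus a bookkeeping simplification. The only point deserving a brief remark is that the upper bound in Theorem~\ref{thm: finite field daisy matroid Turan density} holds for every $n$, whereas the tightness holds only along an arithmetic subsequence; however, since $\pi_{\M}(r, U_{2,q+2})$ exists as a genuine limit, the subsequential value suffices.
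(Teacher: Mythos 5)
Your proposal is correct and matches the paper's approach: the paper presents this corollary as an immediate consequence of Theorem~\ref{thm: finite field daisy matroid Turan density}, precisely because the upper bound holds for all $n$ and the matching lower bound holds along the subsequence of multiples of $\frac{q^r-1}{q-1}$, which suffices since the limit defining $\pi_{\M}(r,U_{2,q+2})$ exists by Corollary~\ref{cor: matroid basis densities exist}. The algebraic simplification you carry out, cancelling $r!$ and $(q-1)^r$ and dropping the trivial $i=0$ factor, is exactly the intended bookkeeping.
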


In particular, this implies that the lower bound of Theorem \ref{thm: ellis-ivan-leader prime power lower bound} is tight when restricted to matroid basis hypergraphs.

\begin{corollary} \label{cor: limit r to infty for D(2, q+2)}
For every prime power $q$ we have 
$$\lim_{r\to\infty}\pi_{\M}(r, U_{2, q+2}) = \prod_{i = 1}^{\infty}(1 - q^{-i}).$$
\end{corollary}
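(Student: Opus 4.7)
The plan is to deduce the limit directly from the closed-form expression
\[
\pi_{\M}(r, U_{2,q+2}) = \prod_{i=1}^{r-1}\left(1 - \frac{q^i - 1}{q^r - 1}\right)
\]
provided by Corollary~\ref{cor: density for D(2, q+2)}, by first rewriting it in a form where the $r\to\infty$ behavior is transparent.

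The first step is to reindex. Each factor equals $(q^r - q^i)/(q^r - 1) = q^i(q^{r-i} - 1)/(q^r - 1)$; after substituting $j = r - i$ in the remaining product and pulling out the factor $q^{\binom{r}{2}} = \prod_{i=1}^{r-1}q^i$, one finds
\[
\pi_{\M}(r, U_{2,q+2}) = \frac{q^{\binom{r}{2}}\prod_{j=1}^{r-1}(q^j - 1)}{(q^r-1)^{r-1}} = \frac{\prod_{j=1}^{r-1}(1 - q^{-j})}{(1 - q^{-r})^{r-1}},
\]
where the second equality follows by dividing both numerator and denominator by $q^{r(r-1)}$.

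The second step is to pass to the limit in this quotient. The series $\sum_{j\ge 1} q^{-j}$ converges absolutely (it is geometric with ratio $q^{-1}<1$), so the infinite product $\prod_{j=1}^\infty(1 - q^{-j})$ converges, and the partial products in the numerator tend to this value as $r\to\infty$. For the denominator, because $q\ge 2$ forces $rq^{-r}\to 0$, we have $(r-1)\log(1-q^{-r}) \sim -(r-1)q^{-r} \to 0$, so $(1-q^{-r})^{r-1}\to 1$. Combining the two gives the claimed identity. There is no substantial obstacle here; the only point requiring care is verifying that the denominator $(1-q^{-r})^{r-1}$ contributes trivially in the limit, which is immediate from the rapid decay of $q^{-r}$.
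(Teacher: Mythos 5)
Your algebra is correct: rewriting each factor as $(q^r - q^i)/(q^r - 1) = q^i(q^{r-i}-1)/(q^r-1)$, reindexing, and dividing through by $q^{r(r-1)}$ gives $\pi_{\M}(r,U_{2,q+2}) = \prod_{j=1}^{r-1}(1-q^{-j})/(1-q^{-r})^{r-1}$, and the limit then follows since the denominator tends to $1$ because $(r-1)q^{-r}\to 0$. The paper leaves this corollary as an unproved immediate consequence of Corollary~\ref{cor: density for D(2, q+2)}, and your derivation is exactly the natural way to fill in that step, so this matches the intended argument.
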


While Theorem~\ref{thm: finite field daisy matroid Turan density} is tight when $t$ is a prime power, this is likely not the case when $t$ is not a prime power because of the following result.

\begin{theorem}[Geelen and Nelson \cite{GeelenNelson2010}] \label{thm: Geelen-Nelson}
Let $t \ge 2$ be an integer, and let $q$ be the largest prime power at most $t$.
For all sufficiently large $r$, every simple rank-$r$ matroid with no $U_{2,t+2}$-minor has at most $\frac{q^r - 1}{q - 1}$ elements.
\end{theorem}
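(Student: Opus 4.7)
The plan is to strengthen Kung's bound (Theorem~\ref{thm: Kung}) by first identifying the correct exponential growth rate and then pinning down the leading constant. The first step would be to invoke the Growth Rate Theorem of Geelen, Kung, and Whittle, which asserts that every minor-closed class of matroids excluding some $U_{2,\ell}$-minor has size bounded above by $c \cdot (q'^r - 1)/(q'-1)$ for some prime power $q'$ depending on the class. Since $\PG{r-1}{q'}$ contains $U_{2, q'+1}$ as a restriction, membership of $\PG{r-1}{q'}$ in the $U_{2,t+2}$-minor-free class forces $q'+1 \le t+1$, i.e.\ $q' \le t$. Combined with the observation that $\PG{r-1}{q}$ itself realizes growth $(q^r - 1)/(q-1)$ in the class, the correct growth rate is exactly $\Theta(q^r)$, where $q$ is the largest prime power at most $t$.

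The second step would be to sharpen the leading constant from $O(1)$ down to precisely $1/(q-1)$. The strategy is to apply the Projective Geometry Theorem of Geelen--Kung--Whittle, which guarantees that for any fixed $k$, every sufficiently dense rank-$r$ matroid in the class contains $\PG{k}{q}$ as a restriction, provided $r$ is large enough. Once such a $\PG{k}{q}$-restriction is located in $M$, I would argue inductively that $M$ is forced to fit inside (a parallel extension of) a single $\PG{r-1}{q}$: any element outside the $\bF_q$-span of an ambient projective geometry containing our restriction should produce a rank-$2$ minor with more than $t+1$ points, yielding the forbidden $U_{2, t+2}$-minor.

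The main technical obstacle lies in executing this ``extend or contradict'' dichotomy uniformly, particularly in the regime where the next prime power $q'$ exceeding $q$ satisfies $q' \le t+1$. In that case, ``sporadic'' extra elements can produce rank-$2$ restrictions that are $\bF_{q'}$-lines but not $\bF_q$-lines without immediately creating a $U_{2, t+2}$-minor, so the naive dichotomy fails at the level of individual elements. Handling this requires a connectivity-based decomposition (flowers or tangles) showing that the number of such sporadic elements is bounded independently of $r$, and that this bounded excess is absorbed by the gap between the Growth Rate bound $c \cdot (q^r-1)/(q-1)$ and the extremal value $(q^r-1)/(q-1)$ only once $r$ is sufficiently large. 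The qualifier ``for all sufficiently large $r$'' in the statement reflects precisely the rank threshold needed for the Projective Geometry Theorem to produce a $\PG{k}{q}$-restriction with $k$ large enough to drive the contradiction through.
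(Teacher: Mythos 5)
This statement is not proved in the paper; it appears as Theorem~\ref{thm: Geelen-Nelson} with an attribution to Geelen and Nelson~\cite{GeelenNelson2010} and is used only as a black box to motivate Conjecture~\ref{conj: general D_r(2,t)}. There is therefore no in-paper proof to compare your proposal against, and your submission has to be judged as a stand-alone reconstruction of the Geelen--Nelson argument.

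On those terms, your first step is sound: invoking the Growth Rate Theorem, observing that the exponential base $q'$ must satisfy $q' \le t$ (since $\PG{r-1}{q'}$ lies in the class and hence cannot contain $U_{2,t+2}$), and noting that $\PG{r-1}{q}$ witnesses the lower bound correctly pins the base down to the largest prime power $q \le t$. But this only delivers $h(r) = O(q^r)$ with an unspecified leading constant, which is precisely where Kung's bound already leaves off qualitatively. The entire content of the theorem is the constant $1/(q-1)$, and your proposal does not actually establish it. The elementwise ``extend or contradict'' step is, as you yourself concede, false when $q < t$ --- there are $U_{2,t+2}$-minor-free extensions of $\PG{r-1}{q}$ that are not contained in any larger $\bF_q$-projective geometry. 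Citing ``flowers or tangles'' as the mechanism that controls the excess is not an argument: you name a family of tools without stating what decomposition lemma applies, what quantitative bound on sporadic elements results, or why that bound is dominated by the slack in the density estimate at large $r$. In short, the easy half of the argument (identifying the base $q$) is correctly executed, but the step that constitutes the actual theorem is left as an acknowledged gap, so this is an outline of a proof strategy rather than a proof.
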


In light of this theorem and Theorem \ref{thm: finite field daisy matroid Turan density}, we make the following conjecture.

\begin{conjecture} \label{conj: general D_r(2,t)}
Let $t \ge 2$, and let $q$ be the largest prime power at most $t$.
For all sufficiently large integers $r$ and $n$ we have 
$$\ex_{\M}(n,r,U_{2,t+2}) \le b(r, q) \cdot \left( \frac{n(q-1)}{(q^r - 1)} \right)^r.$$
\end{conjecture}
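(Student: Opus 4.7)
The plan is to mimic the proof of Theorem~\ref{thm: finite field daisy matroid Turan density} with Kung's theorem replaced by the stronger Geelen--Nelson theorem (Theorem~\ref{thm: Geelen-Nelson}). Exactly as before, the simplification-and-inflation argument of that proof reduces the conjecture to the following Lagrangian statement: for every simple $U_{2,t+2}$-minor-free rank-$r$ matroid $M$ and every sufficiently large $r$,
\[
\lambda(M) \le b(r,q)\left(\frac{q-1}{q^r-1}\right)^{r}.
\]

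To prove this Lagrangian bound I would attempt an induction on $r$ that mirrors Theorem~\ref{thm: Lagrangian upper bound}. Choose a maximizer $\mathbf{x}^\ast$ of $p_M$ in the relative interior of the simplex (reducing to fewer elements by induction otherwise). Euler's identity gives $r\cdot p_M(\mathbf{x}^\ast) = c$, where $c := \partial p_M/\partial x_i(\mathbf{x}^\ast)$ is independent of $i$, and Lemma~\ref{lem: Lagrangian gradient} bounds $c \le (1-x_i^\ast)^{r-1}\lambda(\si(M/i))$. Once $r$ exceeds the Geelen--Nelson threshold $r_0(t)$, Theorem~\ref{thm: Geelen-Nelson} ensures $s \le (q^r-1)/(q-1)$, and pigeonhole supplies some $x_j^\ast \ge (q-1)/(q^r-1)$. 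Plugging $i = j$ into the derivative bound and invoking the inductive hypothesis at rank $r-1$ reproduces the telescoping calculation from the proof of Theorem~\ref{thm: Lagrangian upper bound} verbatim with every $t$ replaced by $q$, via the recursion~\eqref{eq:b_recursion}.

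The main obstacle is the base case. The inductive step descends rank by one at a time, so eventually one reaches ranks $r < r_0(t)$ at which Geelen--Nelson is silent and only Kung's bound $(t^r-1)/(t-1)$ is available, forcing the Lagrangian estimate at the base to take the weaker form $b(r,t)\left(\frac{t-1}{t^r-1}\right)^{r}$. This is strictly larger than the target $b(r,q)\left(\frac{q-1}{q^r-1}\right)^{r}$ when $t > q$, because $b(r,x)\left(\frac{x-1}{x^r-1}\right)^{r} = \frac{1}{r!}\prod_{i=0}^{r-1}\frac{x^r - x^i}{x^r-1}$ is strictly increasing in $x > 1$. Since each inductive step is tight against the target, this multiplicative gap propagates undiminished, so a naive combination of Geelen--Nelson with the Lagrangian method falls short. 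Closing the gap seems to require a stability input at the base case: when $t$ is not a prime power, no simple $U_{2,t+2}$-minor-free rank-$r$ matroid should come within a constant factor of Kung's Lagrangian bound, because the putative extremal configuration $\PG{r-1}{t}$ does not exist. Quantifying this deficit --- so that it exactly compensates for the gap between $(t^r-1)/(t-1)$ and $(q^r-1)/(q-1)$ --- would plausibly draw on the structural tools underlying the proof of Theorem~\ref{thm: Geelen-Nelson} in~\cite{GeelenNelson2010}; an alternative route is a direct entropy-based or probabilistic proof of the Lagrangian bound in the spirit of Alon's implicit treatment of binary matroids~\cite{Alon}, which might sidestep the inductive base case entirely.
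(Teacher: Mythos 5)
This statement is an open conjecture in the paper, not a proved theorem; the authors offer no proof of it, and you have not produced one either. You have instead produced a careful diagnosis of why the natural approach fails, and your diagnosis is exactly the one the paper itself records: immediately after stating Conjecture~\ref{conj: general D_r(2,t)}, the authors remark that it ``seems more difficult than Theorem~\ref{thm: finite field daisy matroid Turan density}'' precisely because ``we can no longer use direct induction on $r$, because Theorem~\ref{thm: Geelen-Nelson} only applies when $r$ is sufficiently large.'' Your write-up elaborates this into a quantitative account --- the inductive descent eventually falls below the Geelen--Nelson threshold $r_0(t)$, at which point only Kung's bound $(t^r-1)/(t-1)$ is available, and since $b(r,x)\left(\frac{x-1}{x^r-1}\right)^{r}$ is increasing in $x$, the base-case loss cannot be recovered by the tight inductive steps above it. That is a correct and useful observation.

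The genuine gap, then, is not a flaw in your reasoning but the absence of any argument that closes the base case. Your two suggested remedies --- a stability-type strengthening of Geelen--Nelson showing that a non-prime-power $t$ forces a strict Lagrangian deficit at every rank, or an entropy/probabilistic route in the spirit of Alon that avoids rank-by-rank induction altogether --- are speculative, and neither is developed enough to constitute a proof. You should present this as an analysis of the obstruction and a programme of attack, not as a proof of the conjecture. As an aside, even your reduction to the Lagrangian statement deserves a word of care: in Theorem~\ref{thm: finite field daisy matroid Turan density} the passage from $\ex_\M$ to $\lambda$ holds for all $n$ and $r$, so that part of your plan is sound, but it produces a clean target only once you already have the Lagrangian bound for all ranks down to $1$, which is exactly what is missing.
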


If true, Conjecture~\ref{conj: general D_r(2,t)} would be asymptotically tight. Conjecture \ref{conj: general D_r(2,t)} seems more difficult than Theorem \ref{thm: finite field daisy matroid Turan density}.
The main difficulty is that we can no longer use direct induction on $r$ because Theorem~\ref{thm: Geelen-Nelson} only applies when $r$ is sufficiently large.
In spite of this, we can use standard prime estimation techniques to prove an approximate version of Conjecture~\ref{conj: general D_r(2,t)}; a similar estimate was made in \cite{EllisIvanLeader}. We use the following well-known prime estimation result.

\begin{theorem}[Baker, Harman, and Pintz~\cite{BakerHarmanPintz2001}] \label{thm:prime_estimate_below_t}
    For all sufficiently large real numbers $x$, there is a prime in the interval $[x - x^{0.525}, x]$.
\end{theorem}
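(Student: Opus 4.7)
This is a deep theorem of analytic number theory that the paper invokes as a black box; no combinatorial proof is possible in this setting, so I can only sketch the general analytic strategy and outline where the numerical exponent comes from. The starting point would be the Riemann--von Mangoldt explicit formula
$$\psi(x) - \psi(x - y) = y - \sum_\rho \frac{x^\rho - (x-y)^\rho}{\rho} + O(\log^2 x),$$
where $\psi(x) = \sum_{n \le x}\Lambda(n)$ and the sum runs over the nontrivial zeros $\rho = \beta + i\gamma$ of $\zeta$. Producing a prime in $[x - y, x]$ reduces to showing that the main term $y$ dominates the contribution of the zero-sum for $y$ as small as $x^{0.525}$.

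To control the zero-sum, the plan is to use zero-density estimates of the form $N(\sigma, T) \ll T^{A(\sigma)(1 - \sigma)}(\log T)^{c}$, which bound the number of zeros with $\operatorname{Re}(\rho) \ge \sigma$ and $|\operatorname{Im}(\rho)| \le T$. These are obtained from mean-value theorems for Dirichlet polynomials together with exponential-sum estimates of Vinogradov and van der Corput type. The exponent one can attain on $y$ is essentially determined by how small $A(\sigma)$ can be made uniformly in $\sigma \in [1/2, 1]$; unconditionally one cannot hope to do better than what the current zero-density record allows.

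The key innovation of Baker, Harman, and Pintz is to supplement these analytic inputs with Harman's asymptotic sieve applied to the Buchstab iteration, rather than working only with the explicit formula. The hard part, and where the specific constant $0.525$ is forced, is a delicate numerical optimization: one decomposes the sieve sums into Type~I and Type~II pieces that can be handled by the analytic inputs above, and then one must show that any leftover Buchstab configurations can be combinatorially rearranged into handleable ones using only the zero-density information available. Balancing the losses at each step of the Buchstab decomposition against the strength of the exponential-sum inputs is the main obstacle, and improves on the earlier exponents $5/8$ (Ingham), $7/12$ (Huxley), and $11/20$ (Heath-Brown--Iwaniec). For the purposes of the present paper, only the existence of \emph{some} $\theta < 1$ with a prime in $[x - x^{\theta}, x]$ is needed to control the gap between $t$ and the largest prime power below it, so the specific value $0.525$ is used merely as a convenient off-the-shelf input.
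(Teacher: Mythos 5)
You have correctly recognized that this is an external theorem cited as a black box; the paper offers no proof and simply refers to Baker, Harman, and Pintz \cite{BakerHarmanPintz2001}, exactly as you note. Your sketch of the analytic machinery (explicit formula, zero-density estimates, Harman's sieve with Buchstab decomposition, and the lineage of exponents through Ingham, Huxley, and Heath-Brown--Iwaniec) is a faithful high-level description of the actual BHP argument, and your closing observation that the paper only needs \emph{some} $\theta < 1$ is precisely the role this theorem plays in Section~4.
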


We can now approximate $\ex_M(n,r,U_{2,t+2})$ using the largest prime at most~$t$.

\begin{theorem}
    Let $r$ be a fixed positive integer and let $t$ be a sufficiently large integer. For all $n \ge r$ we have
    \[\ex_{\M}(n,r,U_{2,t+2}) \le \left( b(r,q) \left((q-1)/(q^r - 1)\right)^r + O_t(t^{\theta-2})\right)n^r\] where $\theta = 0.525$ and $q$ is the largest prime at most $t$. 
\end{theorem}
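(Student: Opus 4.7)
The plan is to apply Theorem~\ref{thm: finite field daisy matroid Turan density} directly (with no prime-power assumption on $t$), which yields the bound $\ex_{\M}(n,r,U_{2,t+2}) \le h(t)\,n^r$ for every integer $t \ge 2$ and every $n \ge r$, where
\[h(x) = b(r,x)\left(\frac{x-1}{x^r - 1}\right)^r = \frac{1}{r!}\prod_{i=0}^{r-1}\frac{x^r - x^i}{x^r - 1}.\]
The task then reduces to showing $h(t) - h(q) = O_r(t^{\theta - 2})$, where $q$ is the largest prime at most $t$; absorbing this difference into the error term gives the claimed estimate.

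By Theorem~\ref{thm:prime_estimate_below_t}, for all sufficiently large $t$ we have $t - q \le t^{\theta}$, and the mean value theorem yields
\[h(t) - h(q) \le (t - q)\,\max_{\xi \in [q,t]}|h'(\xi)|.\]
So the main step is the derivative bound $|h'(x)| = O_r(x^{-2})$ for $x$ of order $t$. I would carry this out by logarithmic differentiation. The $i = 0$ factor of $h$ is identically $1$, and for $i \ge 1$ one has $\log\!\bigl(1 - (x^i - 1)/(x^r - 1)\bigr) = -x^{i-r} + O_r(x^{2(i-r)})$ as $x \to \infty$. Differentiating term by term, the $i = r - 1$ summand contributes $(1 + o(1))\,x^{-2}$ to $h'(x)/h(x)$, while every other summand contributes $O_r(x^{-3})$. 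Since $h(x) \le 1$ for $x \ge 2$, this gives $|h'(x)| = O_r(x^{-2})$.

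Combining the two displays yields $h(t) - h(q) \le t^\theta \cdot O_r(t^{-2}) = O_r(t^{\theta - 2})$, which gives the theorem. The only mildly delicate step is the derivative estimate; it is routine, but care is needed to verify that the implicit constants depend only on the fixed parameter $r$. Everything else follows from the already-established Theorems~\ref{thm: finite field daisy matroid Turan density} and~\ref{thm:prime_estimate_below_t}, so I do not expect any real obstacle beyond bookkeeping.
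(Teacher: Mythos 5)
Your proof is correct and follows a genuinely different route from the paper's in its key estimate. Both arguments invoke Theorem~\ref{thm: finite field daisy matroid Turan density} with the given parameter $t$ (which need not be a prime power), reducing the problem to showing $h(t) \le h(q) + O_r(t^{\theta-2})$ for $h(x) = b(r,x)\bigl((x-1)/(x^r-1)\bigr)^r = \frac{1}{r!}\prod_{i=0}^{r-1}\bigl(1 - (x^i-1)/(x^r-1)\bigr)$. The paper carries out this comparison with explicit elementary estimates: using $t \le q + t^\theta$ from Baker--Harman--Pintz, it replaces $t^r - 1$ by $(q + t^\theta)^r - 1$ in each denominator, expands $(q + t^\theta)^r$ via the binomial theorem, and tracks the error term-by-term through the product. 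You instead apply the mean value theorem together with the bound $|h'(x)| = O_r(x^{-2})$ on $[q,t]$, obtained by logarithmic differentiation. Your route isolates the analytic content more cleanly; the paper's avoids calculus and is more self-contained. One minor inaccuracy: for $1 \le i < r/2$, the claimed expansion $\log\bigl(1 - (x^i-1)/(x^r-1)\bigr) = -x^{i-r} + O_r(x^{2(i-r)})$ misses the $+x^{-r}$ term arising from $(x^i-1)/(x^r-1) = x^{i-r} - x^{-r} + O(x^{i-2r})$; since $-r > 2(i-r)$ in this range, the correct error order is $O_r\bigl(x^{\max(-r,\,2(i-r))}\bigr)$. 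This does not affect the conclusion --- after differentiating, each summand with $i \le r-2$ still contributes $O_r(x^{-3})$ and the $i = r-1$ summand still dominates with $(1 + o(1))x^{-2}$ --- so your derivative estimate, and everything downstream of it, stands.
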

\begin{proof}
Let $t_0$ be such that the interval $[t-t^{\theta}, t]$ contains a prime number for all $t \ge t_0$ (such a $t_0$ exists by Theorem~\ref{thm:prime_estimate_below_t}), and assume that $t \ge t_0$.
As $t \le q + t^{\theta}$, we have $\ex_{\M}(n, U_{2,t+2}) \le \ex_{\M}(n, U_{2,q+t^{\theta}+2})$ because every matroid with no $U_{2, t+2}$-minor also has no $U_{2, q+t^{\theta}+2}$-minor. By \eqref{eq:b_definition},
\begin{align*}
    b(r,t) \left( (t-1)/(t^r - 1) \right)^r = \frac{1}{r!} \prod_{i = 0}^{r-1} \left (\frac{t^r - t^i}{t^r - 1}\right) = \frac{1}{r!} \prod_{i = 0}^{r-1} \left( 1 - \frac{t^i - 1}{t^r - 1} \right).
\end{align*}
As $q \le t$, by the binomial theorem we have that
$$ (q + t^{\theta})^r \le q^r + 2^rt^{r + \theta - 1}.$$
As $t$ is sufficiently large, there exists a constant $C = C(r)$,
\begin{align*}
    -\frac{1}{(q + t^{\theta})^r - 1} \le -\frac{1}{q^r + 2^rt^{r + \theta - 1} - 1} \le -\frac{1}{q^r - 1} + \frac{C}{t^{r + 1 -\theta}}.
\end{align*}

We have, 
\begin{align*}
    \frac{1}{r!} \prod_{i = 0}^{r-1} \left( 1 - \frac{t^i - 1}{t^r - 1} \right) &\le \frac{1}{r!} \prod_{i = 0}^{r-1} \left( 1 - \frac{t^i - 1}{(q + t^{\theta})^r - 1} \right) \le \frac{1}{r!} \prod_{i = 0}^{r-1} \left(1 -\frac{t^i-1}{q^r - 1} + \frac{C(t^i-1)}{t^{r+1 - \theta}}\right)\\
    &\le \frac{1}{r!} \prod_{i = 0}^{r-1} \left(1 -\frac{q^i-1}{q^r - 1} + \frac{C(t^i-1)}{t^{r+1-\theta}}\right) = \frac{1}{r!} \prod_{i = 0}^{r-1} \left(1 -\frac{q^i-1}{q^r - 1}\right) + O_t\left(\frac{1}{t^{2-\theta}}\right).
\end{align*}
By Theorem \ref{thm: finite field daisy matroid Turan density}, the claim follows.
\end{proof}

Once again, we have a corollary for Tur\'an basis density.

\begin{corollary} \label{cor: density for D_r(2,t+2)}
Let $r$ and $t$ be integers with $r \ge 2$ and $t$ sufficiently large, and let $q$ be the largest prime power at most $t$. Then
$$\prod_{i = 1}^{r-1}\Big(1 - \frac{q^i-1}{q^r - 1}\Big) \le \pi_{\M}(r,U_{2,t+2}) 
\le \prod_{i = 1}^{r-1}\Big(1 - \frac{q^i-1}{q^r - 1}\Big) + \frac{C \cdot r!}{t^{2-\theta}},$$
where $\theta = 0.525$ and $C$ is a positive constant depending only on $r$.
\end{corollary}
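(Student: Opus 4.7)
The corollary should follow routinely by combining the theorem stated just above it with Corollary~\ref{cor: density for D(2, q+2)} and a monotonicity observation. There is no real obstacle; the plan is simply to assemble the two pieces and identify the main term from the theorem with the product appearing in the corollary.

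For the upper bound, I would start from the preceding theorem, which gives
\begin{equation*}
\ex_{\M}(n,r,U_{2,t+2}) \le \Bigl( b(r,q)\bigl((q-1)/(q^r-1)\bigr)^r + O_t(t^{\theta-2})\Bigr)n^r.
\end{equation*}
Dividing both sides by $\binom{n}{r}$ and letting $n\to\infty$, the factor $n^r/\binom{n}{r}$ tends to $r!$, so passing to the limit yields
\begin{equation*}
\pi_{\M}(r,U_{2,t+2}) \le r!\cdot b(r,q)\bigl((q-1)/(q^r-1)\bigr)^r + \frac{C\cdot r!}{t^{2-\theta}}
\end{equation*}
for an appropriate constant $C = C(r)$. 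By Corollary~\ref{cor: density for D(2, q+2)}, the main term equals $\prod_{i=1}^{r-1}\bigl(1-(q^i-1)/(q^r-1)\bigr)$, which is exactly the desired upper bound.

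For the lower bound, I would use the simple monotonicity observation that since $q \le t$, the uniform matroid $U_{2,q+2}$ is a minor of $U_{2,t+2}$ (delete $t-q$ elements). Therefore every $U_{2,q+2}$-minor-free matroid is also $U_{2,t+2}$-minor-free, so
\begin{equation*}
\ex_{\M}(n,r,U_{2,t+2}) \ge \ex_{\M}(n,r,U_{2,q+2}),
\end{equation*}
and consequently $\pi_{\M}(r,U_{2,t+2}) \ge \pi_{\M}(r,U_{2,q+2})$. Since $q$ is a prime power, Corollary~\ref{cor: density for D(2, q+2)} evaluates the right-hand side as $\prod_{i=1}^{r-1}\bigl(1-(q^i-1)/(q^r-1)\bigr)$, completing the proof.

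The only thing to double-check while writing the argument is that the constant $C(r)$ produced by the theorem is absorbed correctly into the constant $C$ in the corollary statement, but this is purely bookkeeping; no new ideas are needed beyond what the preceding theorem and corollary already provide.
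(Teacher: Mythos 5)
Your proof is correct and is exactly the argument the paper leaves implicit: pass the bound of the preceding theorem to the limit to get the upper bound, and use minor-monotonicity together with Corollary~\ref{cor: density for D(2, q+2)} for the lower bound. One small point you glossed over: the preceding theorem takes $q$ to be the largest \emph{prime} at most $t$, while the corollary takes $q$ to be the largest \emph{prime power} at most $t$; these may differ, but since $\prod_{i=1}^{r-1}\bigl(1-(q^i-1)/(q^r-1)\bigr)$ is increasing in $q$, the theorem's upper bound (with the possibly smaller prime) still implies the corollary's upper bound (with the prime power), so the discrepancy is harmless and your argument goes through unchanged.
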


\section{$U_{s, s+1}$} \label{sec: circumference}
In this section we consider $U_{s, s+1}$.
The problem of finding $\ex_{\M}(n,r,U_{s, s+1})$ has a particularly interesting interpretation, which we briefly describe.
First, the \emph{circumference} of a matroid with at least one circuit is the number of elements in a largest circuit, and a matroid has circumference at most $s$ if and only if it has no $U_{s,s+1}$-minor. 
To see this, let $M$ be a matroid whose largest circuit has $k \ge s+1$ elements; deleting all elements outside the circuit and then contracting $k-s-1$ elements from the circuit results in a $U_{s,s+1}$-minor. In the other direction, note that the circumference cannot increase when deleting or contracting an element, so if $M$ has a $U_{s,s+1}$-minor, then the circumference of $M$ is at least the circumference of $U_{s,s+1}$, which is $s+1$.
So, $\ex_{\M}(n,r, U_{s,s+1})$ is the maximum number of bases over all matroids with $n$ elements, rank $r$, and circumference at most $s$.

When $s = 1$ we have $\ex_{\M}(n,r,U_{s,s+1}) = 1$ by Proposition \ref{prop: s = 1}.
We can easily get a tight upper bound when $s = 2$.

\begin{proposition} \label{prop: D(2,3)}
Let $n \ge r \ge 2$. Then 
\[ \ex_{\M}(n,r,U_{2,3}) \le \left(\frac{n}{r}\right)^ r,\] with equality when $r$ divides $n$.
\end{proposition}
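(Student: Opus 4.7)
The approach I would take is to first obtain a clean structural description of rank-$r$ matroids with no $U_{2,3}$-minor, after which the bound follows immediately from the arithmetic-geometric mean inequality. Specifically, I would prove the structural claim that a matroid $M$ has no $U_{2,3}$-minor if and only if its simplification $\si(M)$ is a free matroid, so that any rank-$r$ such matroid has the form of $r$ nonempty parallel classes together with some loops.

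For the nontrivial direction of the structural claim, suppose $\si(M)$ contains a circuit $C$ with $|C| = k \ge 3$. Choose three elements $e_1, e_2, e_3 \in C$ and let $I = C \setminus \{e_1, e_2, e_3\}$; as a proper subset of a circuit, $I$ is independent. A direct rank computation shows that $r_{M/I}(\{e_i,e_j\}) = 2$ for every pair while $r_{M/I}(\{e_1,e_2,e_3\}) = 2$, so $(M|C)/I$ is a rank-$2$ matroid on three elements with no loops or parallel pairs, i.e.\ a copy of $U_{2,3}$. Since $\si(M)$ is simple, the absence of circuits of size $\ge 3$ means $\si(M)$ has no circuits at all, so it equals the free matroid on its ground set. (The converse is immediate, since $U_{2,3}$ itself has a 3-element circuit.)

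Given this, for a rank-$r$ matroid $M$ with $n$ elements and no $U_{2,3}$-minor, $\si(M)$ equals the free matroid on $r$ elements, so $M$ consists of exactly $r$ nonempty parallel classes of sizes $n_1,\dots,n_r$ together with some loops, where $n_1 + \cdots + n_r \le n$. Each basis of $M$ selects one element from each parallel class, so $b(M) = \prod_{i=1}^{r} n_i$. Applying AM-GM,
\[
b(M) = \prod_{i=1}^{r} n_i \le \left( \frac{1}{r}\sum_{i=1}^{r} n_i \right)^{\!r} \le \left( \frac{n}{r} \right)^{\!r},
\]
which is the desired upper bound. When $r \mid n$, equality is realized by the loopless matroid whose simplification is the $r$-element free matroid and whose parallel classes all have size $n/r$; this matroid has exactly $(n/r)^r$ bases.

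The only mild obstacle is establishing the structural characterization, which reduces to the short minor calculation above; after that, the proof is a one-line application of AM-GM together with the obvious extremal construction.
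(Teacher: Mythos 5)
Your proof is correct, and it follows essentially the same approach as the paper: establish that a rank-$r$ matroid with no $U_{2,3}$-minor is a direct sum of $r$ parallel classes (plus loops), then apply the AM--GM inequality. The paper reaches the structural fact by fixing a basis $B$ and observing that every element must be parallel to one in $B$, while you reach it by showing $\si(M)$ has no circuit of size $\ge 3$; these are interchangeable packagings of the same observation, and the rest of the argument coincides.
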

\begin{proof}
Let $n \ge r \ge 2$ be integers, and let $M$ be an $n$-element rank-$r$ matroid with no $U_{2,3}$-minor. Without loss of generality, we may assume that $M$ is loopless. Let $B$ be a basis of $M$.
Every element of $M$ is parallel to an element of $B$, or else $M$ has a $U_{2,3}$-minor.
So $M$ is a direct sum of parallel classes $P_1, P_2, \dots, P_r$ with $\sum_i |P_i| = n$.
The number of bases of $M$ is $\prod_i |P_i|$, and since $\sum_i |P_i| = n$, by convexity, it follows that $\prod_i |P_i| \le (\frac{n}{r})^r$, as desired.
When $r$ divides $n$, equality holds when $|P_i| = n/r$ for all $i \in [r]$.
\end{proof}

We next consider the case when $s = 3$. The following lemma is a structural result for $U_{3,4}$-minor-free matroids.

\begin{lemma}\label{lem: D_r(3,4) structure}
    If $M$ be a $U_{3,4}$-minor-free matroid, then $M$ is a direct direct sum of matroids of rank at most 2. 
\end{lemma}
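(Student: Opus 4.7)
The plan is to establish the contrapositive: every connected matroid of rank at least $3$ contains a circuit of size $\ge 4$, and hence has a $U_{3,4}$-minor by the circumference characterization noted at the start of Section~\ref{sec: circumference}. Granted this, decomposing $M$ into its connected components immediately yields the desired expression as a direct sum of matroids of rank at most~$2$.

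For the contrapositive, suppose $N$ is a connected component of $M$ with $r(N)\ge 3$, and pick three independent elements $b_1,b_2,b_3$ of $N$ lying inside a basis. Since $M$ has circumference at most $3$, the connectivity of $N$ supplies, for each pair $i<j$, a $3$-element circuit $\{b_i,b_j,c_{ij}\}$ with $c_{ij}\in\cl(\{b_i,b_j\})\setminus\{b_i,b_j\}$; minimality of this circuit moreover forces $c_{ij}$ to be non-parallel to both $b_i$ and $b_j$ (otherwise $\{b_i,c_{ij}\}$ or $\{b_j,c_{ij}\}$ would already be a smaller circuit). I would then show that $\{b_1,b_2,c_{13},c_{23}\}$ is itself a $4$-element circuit of $N$, contradicting the hypothesis that circumference is at most $3$.

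The verification rests on a standard flat-intersection observation: because $\{b_1,b_2,b_3\}$ is independent, any two of the lines $\cl(\{b_i,b_j\})$ meet in exactly the parallel class of their common basis element. Applying this to the pairs of lines involved, the four elements $b_1,b_2,c_{13},c_{23}$ are pairwise distinct and all lie in the rank-$3$ flat $\cl(\{b_1,b_2,b_3\})$, so the $4$-set has rank at most $3$ and is therefore dependent. On the other hand, each of its four $3$-subsets, namely $\{b_1,b_2,c_{13}\}$, $\{b_1,b_2,c_{23}\}$, $\{b_1,c_{13},c_{23}\}$, and $\{b_2,c_{13},c_{23}\}$, is independent: in each triple the "third" element cannot lie on the line spanned by the other two without being forced to be parallel to a basis element that our non-parallel condition has already excluded. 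Hence $\{b_1,b_2,c_{13},c_{23}\}$ is minimally dependent and thus a $4$-circuit, giving the contradiction.

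The main obstacle is the routine but delicate casework in the last step: I must verify independence of each of the four $3$-subsets separately, each reduction ultimately invoking the same flat-intersection fact but with slightly different roles for the $b_i$'s and $c_{ij}$'s. None of these individual checks is difficult, but all must be carried out in order to close the argument.
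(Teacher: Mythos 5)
Your proof is correct, and it takes a genuinely different route from the paper. You argue by contrapositive via connected components: if some component $N$ has rank at least~$3$, you suppose for contradiction that its circumference is at most~$3$, and then exploit connectivity in an essential way. Connectivity guarantees that each pair $\{b_i,b_j\}$ of basis elements lies in a common circuit, which by the circumference bound and independence of the basis must be a triangle $\{b_i,b_j,c_{ij}\}$; this is exactly how you produce the points $c_{ij}$, and the step is sound (in particular, all three $c_{ij}$ really do exist under your standing hypotheses, even though two of them already suffice). From there you verify directly, by independence of each $3$-subset via the flat-intersection observation $\cl(\{b_i,b_j\})\cap\cl(\{b_j,b_k\})=\cl(b_j)$, that $\{b_1,b_2,c_{13},c_{23}\}$ is a $4$-circuit, contradicting the circumference bound. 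The paper instead argues directly and constructively: after passing to the simplification, it shows each non-basis element lies on a unique line through two basis elements, then uses circuit elimination to show that distinct such basis pairs cannot overlap, and reads off the direct-sum decomposition from the resulting partition of $B$. Both proofs hinge on the same central fact — two non-basis elements on intersecting-but-distinct basis-pair lines force a $4$-circuit — but your route via connectivity and a nested contradiction avoids building the partition explicitly, while the paper's route delivers the decomposition directly without invoking matroid connectivity at all.
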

\begin{proof}
    The claim is trivial if $M$ has rank at most $2$. Suppose $M$ has rank $r \ge 3$, and without loss of generality, suppose $M$ is simple. 
    Let $B$ be a basis of $M$. Then each element not in $B$ is spanned by a pair from $B$, or else $M$ has a $U_{3,4}$-minor.
    If $e,f \in E(M) - B$ are spanned by the pairs $\{b_1, b_2\}$ and $\{b_2, b_3\}$, respectively, with $b_1 \ne b_3$, then by the circuit elimination axiom there is a circuit $C$ contained in $\{b_1, b_3, e, f\}$.
    Then $C$ must be $\{b_1, b_3, e, f\}$ or else $M$ has two distinct lines that share two points, a contradiction.
    Therefore there is a partition of $B$ so that each part has size at most 2, each $e \in E(M) - B$ is spanned by a part of size 2, and each part of size 2 spans some $e \in E(M) - B$. These parts span matroids whose direct sum is equal to $M$, so $M$ is a direct sum of matroids of rank at most 2.
\end{proof}

For each real number $x$ and positive integer $k$ we write 
$$\binom{x}{k} = \prod_{i=0}^{k-1}\frac{x - i}{k - i}.$$
Recall that for a matroid $M$ we write $b(M)$ for the number of bases of $M$.

\begin{theorem}\label{thm: D_r(3,4)}
Let $n$ and $r$ be positive integers with $n > r \ge 2$.
\begin{enumerate}[$(i)$]
\item If $r$ is even, then $\ex_{\M}(n,r,U_{3,4}) \le \binom{\frac{2n}{r}}{2}^{\frac{r}{2}}$, and $\pi_{\M}(r,U_{3,4}) = \frac{r! \cdot 2^{r/2}}{r^r}$.

\item If $r$ is odd, then $\ex_{\M}(n,r,U_{3,4}) \le \big(1 + o_n(1)\big)\frac{n}{r}\binom{\frac{2n}{r}}{2}^{\frac{r-1}{2}}$ and $\pi_{\M}(r,U_{3,4}) = \frac{r! \cdot 2^{\frac{r-1}{2}}}{r^r}$.
\end{enumerate}
\end{theorem}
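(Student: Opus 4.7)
The plan is to exploit Lemma~\ref{lem: D_r(3,4) structure} to reduce the extremal problem to a finite optimization over direct-sum decompositions. Since loops do not affect the basis count, I may assume $M$ is loopless, so Lemma~\ref{lem: D_r(3,4) structure} gives $M = M_1 \oplus \cdots \oplus M_k$ with each $r_i := r(M_i) \in \{1,2\}$. Writing $n_i = |E(M_i)|$ and noting $b(M) = \prod_i b(M_i)$, $\sum r_i = r$, and $\sum n_i = n$, the problem becomes to maximize $\prod_i b(M_i)$, where $b(M_i) = n_i$ when $r_i = 1$ and $b(M_i) \le \binom{n_i}{2}$ when $r_i = 2$ (with equality when $M_i = U_{2,n_i}$).

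Two inequalities then drive the optimization. First, a \emph{merging inequality}: for integers $a,b \ge 1$, one has $\binom{a+b}{2} \ge ab$, which is equivalent to $a(a-1) + b(b-1) \ge 0$. Iterating this merge, any pair of rank-$1$ components can be replaced by a single rank-$2$ component (namely $U_{2,a+b}$) without decreasing the basis count, so I may assume the optimal configuration contains zero rank-$1$ components when $r$ is even and exactly one when $r$ is odd. Second, a \emph{balancing inequality}: among rank-$2$ component sizes $c_1,\dots,c_k$ summing to a fixed $C$, the product $\prod_j \binom{c_j}{2}$ is maximized (even over positive reals with $c_j \ge 2$) when all $c_j = C/k$, because $c \mapsto \log \binom{c}{2}$ is concave on $(1,\infty)$: its second derivative equals $-c^{-2} - (c-1)^{-2}$.

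For even $r$, these two steps give $b(M) \le \binom{2n/r}{2}^{r/2}$ directly, with the matching construction being the direct sum of $r/2$ (near-equal) copies of $U_{2,2n/r}$. For odd $r$, they yield $b(M) \le a \binom{2(n-a)/(r-1)}{2}^{(r-1)/2}$ for the size $a$ of the unique rank-$1$ component; a short single-variable calculus computation shows that, treating $a$ as a real variable, this expression is maximized at $a = n/r$, where it equals $(n/r)\binom{2n/r}{2}^{(r-1)/2}$. Because $a$ must be an integer, the actual maximum falls short of this continuous optimum by a factor of $1 - O(1/n)$, which is absorbed into the $(1+o_n(1))$ factor in the theorem. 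The matching construction is the direct sum of $U_{1,n/r}$ and $(r-1)/2$ copies of $U_{2,2n/r}$ (appropriately rounded). Expanding $\binom{2n/r}{2}^{r/2} \sim 2^{r/2}(n/r)^r$ in the even case and $(n/r)\binom{2n/r}{2}^{(r-1)/2} \sim 2^{(r-1)/2}(n/r)^r$ in the odd case, and dividing by $\binom{n}{r} \sim n^r/r!$, yields the stated Tur\'an basis densities.

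The main obstacle is the merging inequality: it is the step that justifies collapsing rank-$1$ pieces into rank-$2$ pieces and therefore forces the extremal configuration to consist almost entirely of copies of $U_{2,2n/r}$. Once this and the concavity of $\log\binom{c}{2}$ are in hand, everything else is a routine constrained optimization together with a standard asymptotic estimate of $\binom{n}{r}$.
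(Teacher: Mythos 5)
Your proposal is correct and rests on the same foundation as the paper, namely Lemma~\ref{lem: D_r(3,4) structure}, but the post-decomposition optimization is organized differently. The paper peels off one rank-2 (resp.\ rank-1) summand at a time and runs induction on $r$, implicitly re-doing the balancing at each step; you instead do a single global optimization, making explicit the two inequalities that drive it: the merge inequality $\binom{a+b}{2}\ge ab$ (which lets you assume at most one rank-1 summand), and the concavity of $c\mapsto\log\binom{c}{2}$ on $(1,\infty)$ (which balances the rank-2 sizes). Your version is arguably cleaner because the paper's inductive step quietly assumes one can always split off a rank-2 summand even when the decomposition is entirely into rank-1 pieces, which is only valid because any rank-2 matroid on $n_1$ elements — including $U_{1,a}\oplus U_{1,b}$ — has at most $\binom{n_1}{2}$ bases, i.e.\ precisely your merge inequality; you surface that fact rather than leave it implicit.

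One small inaccuracy: in the odd-$r$ case, $a=n/r$ is \emph{not} the exact real maximizer of $f(a)=a\binom{2(n-a)/(r-1)}{2}^{(r-1)/2}$. Writing $x=2(n-a)/(r-1)$, the critical-point equation $x(x-1)=(2x-1)\bigl(n-\tfrac{(r-1)x}{2}\bigr)$ fails at $x=2n/r$ by an additive error of $n/r$, so the true optimizer $a^*$ differs from $n/r$ by $O(1)$. This is harmless for the theorem as stated — the second-order Taylor estimate shows $f(a^*)=(1+O(r^2/n^2))f(n/r)$, which is absorbed into the $(1+o_n(1))$ factor — but you should not claim exactness. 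The paper sidesteps this by using the crude asymptotic $\binom{m}{2}\le m^2/2$ before optimizing, which does have its maximum exactly at $n_1=n/r$.
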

\begin{proof}
Let $M$ be an $n$-element, rank-$r$, $U_{3,4}$-minor-free matroid such that $b(M) = \ex_{\M}(n,r,U_{3,4})$. By Lemma~\ref{lem: D_r(3,4) structure}, we may assume that $M$ is a direct sum of matroids of rank at most two. We first show that the desired upper bounds hold by induction on $r$.
When $r = 2$, clearly $(i)$ holds, so we may assume that $r \ge 3$.

We consider two cases, depending on the parity of $r$.
First  suppose that $r$ is even. 
Then $M = M_1 \oplus M_2$ where $r(M_1) = 2$ and $r(M_2) = r - 2$.
Let $n_i = |M_i|$ for $i = 1,2$, so $n_1 + n_2 = n$.
Then 
\[b(M) = b(M_1) \cdot b(M_2) \le \binom{n_1}{2} \cdot \binom{\frac{2n_2}{r-2}}{2}^{\frac{r-2}{2}},\]
where the bound on $b(M_2)$ holds by induction on $r$.
Since $n_1 + n_2 = n$, this product is maximized when $n_1 = \frac{2n}{r}$ and $n_2 = n - \frac{2n}{r}$, so $n$ is distributed equally to each of the $\frac{r}{2}$ factors in the product.
Since $\frac{2n_2}{r-2} = \frac{2n}{r }$, we see that $b(M) \le \binom{\frac{2n}{r}}{2}^{\frac{r}{2}}$, as desired.

Next suppose $r$ is odd.
Then $M = M_1 \oplus M_2$, where $r(M_1) = 1$ and $r(M_2) = r - 1$.
Let $n_i = |M_i|$ for $i = 1,2$, so $n_1 + n_2 = n$.
Then 
\[b(M) = b(M_1) \cdot b(M_2)\le n_1 \cdot \binom{\frac{2n_2}{r-1}}{2}^{\frac{r-1}{2}} \le \left( \frac{\sqrt{2}}{r-1} \right)^{r-1}n_1n_2^{r-1},\]
where the bound on $b(M_2)$ holds by induction on $r$.
Since $n_1 + n_2 = n$, this product is maximized when $n_1 = \frac{n}{r}$ and $n_2 = \frac{r-1}{r}n$. It follows that $b(M) \le (\sqrt{2})^{r-1} (n/r)^r \le \big(1 + o_n(1)\big)\frac{n}{r}\binom{\frac{2n}{r}}{2}^{\frac{r-1}{2}}$, as desired.

We now combine the upper bounds with asymptotically matching lower bounds to find $\pi_{\M}(r,U_{3,4})$.
If $r$ is even and $r/2$ divides $n$, let $M$ be the direct sum of $r/2$ copies of $U_{2, \frac{n}{r/2}}$.
Then $b(M) = \binom{\frac{2n}{r}}{2}^{\frac{r}{2}}$ and $M$ has no $U_{3,4}$-minor.
Combined with the upper bound from $(i)$ we see that 
$$\pi_{\M}(r,U_{3,4}) = \lim_{n \to \infty}\frac{\binom{\frac{2n}{r}}{2}^{\frac{r}{2}}}{\binom{n}{r}} = \frac{r! \cdot 2^{r/2}}{r^r}.$$
If $r$ is odd and $r$ divides $n$, let $M$ be the direct sum of $\frac{r-1}{2}$ copies of $U_{2, \frac{2n}{r}}$ and one copy of $U_{1, \frac{n}{r}}$.
Then $b(M) = \frac{n}{r}\binom{\frac{2n}{r}}{2}^{\frac{r-1}{2}}$ and $M$ has no $U_{3,4}$-minor.
Combined with the upper bound from $(ii)$ we see that 
\begin{equation*}
    \pi_{\M}(r,U_{3,4}) = \lim_{n \to \infty}\frac{\frac{n}{r}\binom{\frac{2n}{r}}{2}^{\frac{r-1}{2}}}{\binom{n}{r}} = \frac{r! \cdot 2^{\frac{r-1}{2}}}{r^r}.\qedhere
\end{equation*}
\end{proof}

We have the following corollary for $r = 3$.

\begin{corollary} \label{cor: D_3(3,4)}
$\pi_{\M}(3,U_{3,4}) = 4/9$.
\end{corollary}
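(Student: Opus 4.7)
The corollary is an immediate specialization of Theorem~\ref{thm: D_r(3,4)} to $r = 3$, so the plan is simply to plug $r = 3$ into the formula and verify the arithmetic, while noting which case of the theorem applies.

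Since $r = 3$ is odd, part $(ii)$ of Theorem~\ref{thm: D_r(3,4)} applies and yields
\[
\pi_{\M}(3, U_{3,4}) \;=\; \frac{3! \cdot 2^{(3-1)/2}}{3^3} \;=\; \frac{6 \cdot 2}{27} \;=\; \frac{4}{9}.
\]
There is no real obstacle here: the upper bound comes from the structural decomposition of Lemma~\ref{lem: D_r(3,4) structure} (a $U_{3,4}$-minor-free matroid is a direct sum of matroids of rank at most $2$), together with convex optimization of the product $n_1 \binom{n_2/1}{2}$ subject to $n_1 + n_2 = n$, which is maximized at $n_1 = n/3$ and $n_2 = 2n/3$. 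The matching lower bound is achieved (when $3 \mid n$) by the matroid $U_{1, n/3} \oplus U_{2, 2n/3}$, whose basis count is exactly $\tfrac{n}{3}\binom{2n/3}{2}$, giving $\tfrac{n}{3} \cdot \tfrac{(2n/3)(2n/3 - 1)}{2} \sim \tfrac{2n^3}{27}$ asymptotically, and $\binom{n}{3} \sim \tfrac{n^3}{6}$, so the ratio tends to $\tfrac{2/27}{1/6} = \tfrac{4}{9}$.

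Thus the entire content of the corollary is the observation that this optimal split into a rank-$1$ factor and a rank-$2$ factor beats, e.g., splitting into three rank-$1$ factors (which would give the bound $(n/3)^3$ from Proposition~\ref{prop: D(2,3)} and a density of $\tfrac{6}{27} = \tfrac{2}{9}$, the other natural candidate). No new argument is needed beyond invoking Theorem~\ref{thm: D_r(3,4)}$(ii)$.
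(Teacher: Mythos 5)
Your proof is correct and matches the paper's (implicit) argument exactly: the corollary is just Theorem~\ref{thm: D_r(3,4)}(ii) with $r=3$, and your arithmetic $\frac{3!\cdot 2^{1}}{3^3}=\frac{12}{27}=\frac{4}{9}$ is right, as is the verification via the extremal example $U_{1,n/3}\oplus U_{2,2n/3}$.
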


The basis hypergraph of $U_{3,4}$ is the 3-uniform hypergraph $K_4^{(3)}$, and while its Tur\'an density is not known, a construction due to Tur\'an \cite{Turan} shows that $\pi(K_4^{(3)}) \ge 5/9$.
So Corollary~\ref{cor: D_3(3,4)} is notable because it shows that the Tur\'an density of $K_4^{(3)}$ is significantly smaller when we restrict to matroid basis hypergraphs.

We next consider $s \ge 4$.
Matroids with circumference $s$ are not as well-structured when $s \ge 4$, but we expect that $\ex_{\M}(n,r,U_{s,s+1})$ is achieved by a direct sum of uniform matroids of rank $s - 1$, as it is for $s = 2$ and $s = 3$.

\begin{conjecture} \label{conj: D(s,s+1) with s >= 4}
Let $s,r$ be positive integers with $s \ge 4$ and $r \ge s - 1$.
Then $\ex_{\M}(n,r,U_{s,s+1}) \le \binom{\frac{n}{r/(s-1)}}{s-1}^{\frac{r}{s-1}}$ for all $n \ge r$, with equality when $s - 1$ divides $r$ and $\frac{r}{s-1}$ divides $n$ for the direct sum of $r/(s-1)$ copies of $U_{s-1, \frac{n}{r/(s-1)}}$.
\end{conjecture}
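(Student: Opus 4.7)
To prove Conjecture~\ref{conj: D(s,s+1) with s >= 4}, the plan is to follow the two-step template used in the proof of Theorem~\ref{thm: D_r(3,4)}: a structural decomposition step followed by an optimization argument. The structural step is substantially more delicate for $s \ge 4$.

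\emph{Structural step.} The goal is to establish the following generalization of Lemma~\ref{lem: D_r(3,4) structure}: every $U_{s,s+1}$-minor-free matroid $M$ is a direct sum of matroids of rank at most $s-1$; equivalently, every connected matroid with circumference at most $s$ has rank at most $s-1$. My plan is to fix a basis $B$ of $M$ and, for each $e \in E(M) - B$, let $B_e$ denote the fundamental circuit $C(e, B)$ minus $e$; by $U_{s,s+1}$-minor-freeness, each $|B_e| \le s-1$. The sets $\{B_e\}_{e \notin B}$ define a hypergraph $\cH$ on $B$ whose connected components govern the direct-sum decomposition of $M$. The aim is to show, via iterated strong circuit elimination together with the circumference bound, that no component of $\cH$ contains more than $s-1$ basis elements, generalizing the ``two lines meeting in two points'' contradiction used in the case $s=3$.

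\emph{Optimization step.} Given such a decomposition $M = M_1 \oplus \cdots \oplus M_k$ with $r_i := r(M_i) \le s-1$ and $n_i := |E(M_i)|$, one has $b(M) \le \prod_i \binom{n_i}{r_i}$, with equality precisely when each $M_i$ is the uniform matroid $U_{r_i, n_i}$. One then maximizes $\prod_i \binom{n_i}{r_i}$ subject to $\sum_i n_i = n$, $\sum_i r_i = r$, and $1 \le r_i \le s-1$. Using the approximation $\binom{n_i}{r_i} \sim n_i^{r_i}/r_i!$ together with Lagrange multipliers shows that, for a fixed rank vector $(r_i)$, the optimum occurs at $n_i = r_i n / r$. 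A simple exchange argument then shows that merging two components of ranks $a, b$ with $a + b \le s - 1$ into a single component of rank $a+b$ strictly increases the asymptotic product; consequently, the optimal partition of $r$ uses the maximum allowed part size $s - 1$ as often as possible, and when $s-1$ divides $r$ this yields exactly $r/(s-1)$ parts of size $s-1$ and matches the construction in the conjecture.

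\emph{Main obstacle.} The hard part is the structural step. For $s = 3$, Lemma~\ref{lem: D_r(3,4) structure} crucially uses the fact that two distinct rank-$2$ flats share at most one point, so overlapping fundamental circuits are tightly constrained. For $s \ge 4$, rank-$(s-1)$ flats may intersect in flats of arbitrary smaller rank, and applying strong circuit elimination to two fundamental circuits $C_1, C_2$ of sizes at most $s$ meeting in a single basis element $b$ only guarantees a new circuit in $(C_1 \cup C_2) - b$ of size at most $2(s-1)$, which may still be at most $s$ without violating $U_{s,s+1}$-minor-freeness. Resolving this will likely require a more delicate matroid-connectivity argument, for instance by leveraging existing results on spanning circuits in sufficiently connected matroids, or by a Tutte-type decomposition that reduces the high-rank connected case to a $U_{s,s+1}$-minor.
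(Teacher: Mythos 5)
The statement you are trying to prove is stated in the paper as a conjecture with no proof, and the paper even warns that ``the direct structural approach of Theorem~\ref{thm: D_r(3,4)} seemingly does not apply.'' Your proposal nonetheless rests entirely on the structural claim that every $U_{s,s+1}$-minor-free matroid is a direct sum of matroids of rank at most $s-1$ (equivalently, every connected matroid with circumference at most $s$ has rank at most $s-1$). That claim is false for every $s \ge 4$. Take the cycle matroid $M(K_{2,k})$ of the complete bipartite graph $K_{2,k}$: it is connected for $k \ge 2$, has rank $k+1$, and since $K_{2,k}$ is bipartite with one side of size two, every cycle has length exactly four, so its circumference is $4 \le s$. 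Hence $M(K_{2,k})$ has no $U_{s,s+1}$-minor for any $s \ge 4$, yet its rank $k+1$ can be made arbitrarily large, far exceeding $s-1$. Your ``main obstacle'' paragraph correctly identifies that strong circuit elimination is too weak here, but it treats this as a technical hurdle to be overcome rather than evidence that the lemma itself cannot be true.

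Without the structural decomposition, your optimization step has nothing to apply to, so the argument does not get off the ground. (That step is otherwise sound: the superadditivity $\frac{(a+b)^{a+b}}{(a+b)!} \ge \frac{a^a}{a!}\cdot\frac{b^b}{b!}$ shows that merging components into parts of size $s-1$ is optimal, matching the conjectured extremal construction.) Note that the counterexample does not falsify the conjecture itself --- for instance $M(K_{2,3})$ has $n = 6$, $r = 4$, $s = 4$, and $12$ bases, which is below the conjectured bound of roughly $12.3$ --- it only falsifies the proposed route to it. Any correct proof will have to handle $U_{s,s+1}$-minor-free matroids that are genuinely connected of high rank, perhaps via a Lagrangian argument in the style of Theorem~\ref{thm: Lagrangian upper bound} or a decomposition along small vertical separations rather than along $1$-separations.
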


This may not be difficult to prove, but the direct structural approach of Theorem~\ref{thm: D_r(3,4)} seemingly does not apply.

\section{$U_{3,t}$} \label{sec: D(3,t)}

After considering $\pi_{\M}(r,U_{s,t})$ when $s \in \{1,2\}$ or $t = s + 1$, perhaps the next natural cases to consider are $s = 3$ or $t = s + 2$.
While we believe that $\pi_{\M}(r,U_{s,s+2})$ and $\pi_{\M}(r,U_{3,t})$ are interesting parameters for future work, we will next focus on the simpler parameter $\pi_{\M}(3,U_{3,t})$, the Tur\'an basis density for rank-3 matroids with no $U_{3,t}$-restriction.
We freely use the facts that any two lines in a matroid share at most one point, and that if $\cL$ is a collection of subsets of a set $E$ so that each set in $\cL$ has at least three elements and each pair of sets in $\cL$ shares at most one common element, then $\cL$ is the set of long lines (i.e.\ lines with at least three points) of a simple rank-3 matroid with ground set $E$ \cite[Proposition~1.5.6]{Oxley2011}.

We first give lower bounds for $\pi_{\M}(3, U_{3,t})$.

\begin{proposition} \label{prop: lower bounds for D_3(3,t)}
For all integers $m \ge 2$, 
\begin{enumerate}[$(a)$]
    \item $\pi_M(3,U_{3,2m+1}) \ge 1 - \frac{1}{m^2}$ and

    \item $\pi_M(3, U_{3,2m+2}) \ge \frac{4m^4}{(2m^2+1)^2} = 1 - \frac{2}{2m^2+1} + \frac{1}{(2m^2+1)^2}$.
\end{enumerate}
\end{proposition}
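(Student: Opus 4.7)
The plan is to prove both parts by exhibiting explicit sequences of rank-$3$ matroids. Both constructions are variants of the pencil configuration ($m$ concurrent long lines through a common point), with (b) requiring an additional generic parallel class to exploit the looser cap constraint ($\le 2m+1$ rather than $\le 2m$).

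For (a), I take the simple rank-$3$ matroid $M_{m,k}$ on $mk+1$ elements consisting of $m$ long lines $L_1,\dots,L_m$ of size $k+1$ concurrent at a common point $p$; such a matroid is realizable inside any sufficiently large projective plane. A short case analysis (split on whether $p$ lies in the cap) shows that the largest cap of $M_{m,k}$ has size $2m$, so $M_{m,k}$ has no $U_{3,2m+1}$-restriction. Its basis count is $b(M_{m,k}) = \binom{mk+1}{3} - m\binom{k+1}{3}$, and a direct limit computation gives density $1 - 1/m^2$ as $k\to\infty$, establishing (a).

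For (b), I construct $N_{m,k,t}$ from $M_{m,k}$ by adjoining a new parallel class $Q$ of $t$ elements in generic position: $Q$ meets no $L_i$, and the only rank-$2$ flat containing $Q$ and any outside element $x$ is $Q \cup \{x\}$. A similar case analysis shows that the maximum cap of $N_{m,k,t}$ is exactly $2m+1$, attained by picking two non-$p$ elements from each $L_i$ together with one element of $Q$, so $N_{m,k,t}$ is $U_{3,2m+2}$-minor-free. Counting dependent $3$-subsets (those contained in some rank-$2$ flat $L_i$ or $Q\cup\{x\}$, plus the rank-$1$ triples inside $Q$) yields
\[
b(N_{m,k,t}) = \binom{mk+1+t}{3} - \binom{t}{3} - m\binom{k+1}{3} - (mk+1)\binom{t}{2}.
\]
Writing $t = ck$ and expanding to leading order in $k$, the limiting density becomes
\[
g(c) = \frac{m(m^2-1) + 3m^2 c}{(m+c)^3}.
\]
A single-variable derivative calculation yields the unique positive critical point $c = 1/(2m)$, where $g(1/(2m)) = 4m^4/(2m^2+1)^2$. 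Taking $t = \lfloor k/(2m) \rfloor$ and $k\to\infty$ then establishes (b).

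The main technical step is identifying the correct scaling $t/k \to 1/(2m)$ in (b); once this ratio is guessed, the rest reduces to routine calculus and careful bookkeeping of the rank-$2$ flats of $N_{m,k,t}$. Verifying that $N_{m,k,t}$ is a well-defined matroid reduces to checking pairwise intersections of the declared rank-$2$ flats, which follows directly from the generic-position placement of $Q$.
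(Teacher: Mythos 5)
Your proof is correct and takes essentially the same approach as the paper: in both parts you cover the ground set by $m$ long lines together with (for part (b)) a parallel class in general position, and the optimal size ratio you find by calculus, $t/k \to 1/(2m)$ (line-density $\tfrac{2m}{2m^2+1}$, parallel-class density $\tfrac{1}{2m^2+1}$), is exactly the ratio the paper plugs in directly. The only cosmetic difference is that you use a pencil (concurrent lines through a common point $p$) where the paper uses pairwise-disjoint lines — an $O(n^2)$ difference in basis count that vanishes in the limit.
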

\begin{proof}
    For (\textit{a}), it suffices to exhibit an $n$-element rank-3 matroid that does not have a $U_{3,2m+1}$-restriction and has a large number of bases.
    Suppose that $m$ divides $n$, and let $M$ be the simple rank-3 matroid on ground set $\{1,2,\ldots,n\}$ with long lines $L_1, L_2, \ldots, L_m$ where $L_i = \{m(i-1)+1, m(i-1)+2, \ldots, mi\}$. Any set of $2m+1$ points intersects one of these lines in at least three points, so $M$ does not have a $U_{3,2m+1}$-restriction.
    The bases of $M$ are all 3-elements subsets that are not contained in one of the $L_i$, so
    \begin{equation*}
        b(M) = \binom{n}{3} - m\binom{n/m}{3} = \binom{n}{3}\left(1-\frac{1}{m^2}\right) + o(n^3).
    \end{equation*}

    For (\textit{b}), it suffices to exhibit an $n$-element rank-3 matroid that does not have a $U_{3,2m+2}$-restriction and has a large number of bases.
    Let $\alpha = \frac{2m}{2m^2+1}$ and let $\beta = 1 - m\alpha$. Let $E$ be an $n$-element set, and partition $E = L_1 \cup L_2 \cup \ldots \cup L_m \cup P$ such that $\big||L_i|-\alpha n\big| \le 1$ for all $i$ and $\big||P|-\beta n\big|\le 1$. Let $M$ be the rank-3 matroid on $E$ in which $L_1, L_2, \ldots, L_m$ are the long lines of $M$, the restriction $M|L_i$ is simple for all $i \in [m]$, and $P$ is a parallel class.
    By the pigeonhole principle, each set of $2m+2$ elements intersects $P$ in more than one element or one of the sets $L_1, L_2, \ldots, L_m$ in more than two elements, so $M$ does not have a $U_{3,2m+2}$-restriction.
    The number of bases of $M$ is
    \begin{align*}
            b(M)
            &= |P|\binom{\sum_i |L_i|}{2} + \sum_{1 \le p < q < r \le m} |L_p||L_q||L_r| + \sum_{\substack{1 \le p, q \le m\\ p\neq q}} \binom{|L_p|}{2}|L_q| \\
            &= \beta n \binom{m\alpha n}{2} + \binom{m}{3}(\alpha n)^3 + m(m-1)\binom{\alpha n}{2}\alpha n + o(n^3) \\
            &= \binom{n}{3}\left(\frac{2m^2}{2m^2+1}\right)^2 +o(n^3). \qedhere
    \end{align*}
\end{proof}

We next prove a general structural result for rank-$3$ matroids with no $U_{3,t}$-restriction (in two cases, depending on the parity of $t$).
Roughly speaking, this theorem says that every matroid with no $U_{3, 2m+1}$-restriction is a union of at most $m$ lines and a small leftover set.

\begin{theorem} \label{thm: structure for D(3,2m+1)}
Let $m \ge 2$.
If $M$ is a rank-$3$ matroid with no $U_{3,2m+1}$-restriction, then there is an integer $k$ with $0 \le k \le m$ and disjoint sets $X$ and $Y$ with $X \cup Y = E(M)$ such that 
\begin{itemize}
    \item $M|X$ is a union of $k$ lines, and

    \item $M|Y$ has no $U_{3, 2(m-k)+1}$-restriction (if $k < m$), no $U_{2, \binom{2(m-k)}{2}+2}$-restriction, and at most $\binom{2(m-k)}{2}\big(\binom{2(m-k)}{2} - 1\big) + 2(m-k)$ points.
\end{itemize}
\end{theorem}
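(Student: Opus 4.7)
The plan is to build the decomposition by iteratively peeling off long lines. Set $M_0 = M$, and while $i < m$ and $M_i$ contains a line of at least $\binom{2(m-i)}{2}+2$ points, let $L_{i+1}$ be such a line and set $M_{i+1} = M_i \del L_{i+1}$. Write $k$ for the value of $i$ when the loop halts, and take $X = L_1 \cup \dots \cup L_k$ and $Y = E(M) \setminus X$; the first bullet of the conclusion is then immediate, and the task reduces to verifying the three conditions on $M|Y = M_k$.

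The central invariant to maintain is that $M_i$ has no $U_{3, 2(m-i)+1}$-restriction at every stage. It holds at $i = 0$ by hypothesis, and I would propagate it as follows. Suppose for contradiction that $M_{i+1}$ contains a $(2(m-i)-1)$-element arc $A$ (a $U_{3, 2(m-i)-1}$-restriction), necessarily disjoint from $L_{i+1}$. The only obstruction to extending $A$ to a $(2(m-i)+1)$-arc of $M_i$ by adjoining two points of $L_{i+1}$ is that those points lie on one of the $\binom{2(m-i)-1}{2}$ secants of $A$; the remaining collinearity configurations are ruled out trivially because $A$ is an arc and $A \cap L_{i+1} = \varnothing$. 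Each such secant meets $L_{i+1}$ in at most one point, so at most $\binom{2(m-i)-1}{2}$ points of $L_{i+1}$ are forbidden, and since $|L_{i+1}| \ge \binom{2(m-i)}{2}+2 \ge \binom{2(m-i)-1}{2}+2$, two usable points remain, producing a $(2(m-i)+1)$-arc in $M_i$ that contradicts the invariant.

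Suppose the iteration halts with $k < m$. Every line of $M_k$ then has at most $\binom{2(m-k)}{2}+1$ points, which is the required $U_{2,\binom{2(m-k)}{2}+2}$-freeness. For the point bound, take a maximum arc $A$ of $M_k$; the invariant gives $|A| \le 2(m-k)$, and every element outside $A$ must lie on a secant of $A$ (else $A$ would extend). Since there are $\binom{|A|}{2} \le \binom{2(m-k)}{2}$ secants and each contains at most $\binom{2(m-k)}{2}-1$ points outside $A$, a union bound yields $|M_k| \le 2(m-k) + \binom{2(m-k)}{2}(\binom{2(m-k)}{2}-1)$. If instead the iteration reaches $i = m$, the invariant at stage $m-1$ forces $r(M_{m-1}) \le 2$, so $M_{m-1}$ is contained in a single line which serves as the final $L_m$; this leaves $Y = \varnothing$ and meets the degenerate conditions at $k = m$ vacuously.

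The main technical hurdle I anticipate is the interplay with parallel classes. The $U_{3,\cdot}$- and $U_{2,\cdot}$-conditions are insensitive to parallels, but the point count $|Y|$ is not, so large parallel classes must be absorbed into the lines $L_i \subseteq X$ rather than end up in $Y$. I would handle this by running the iteration on the simplification $\si(M)$ and then pulling each chosen line back to include its full parallel classes in the original $M$, ensuring $Y$ itself is simple and the point-count argument applies verbatim.
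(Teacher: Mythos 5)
Your proof is correct and follows essentially the same greedy line-peeling strategy as the paper: assume $M$ simple, extract lines with a decreasing size threshold, maintain that what remains has no large arc, and bound the remainder by counting points on the secants of a maximum arc. The only presentational difference is that you establish the arc-freeness invariant by forward induction at each peeling step, whereas the paper argues indirectly via the minimality of an index $j$; both hinge on the same extend-by-two-points idea, and your slightly larger threshold $\binom{2(m-i)}{2}+2$ (versus the paper's $\binom{2(m-i)+1}{2}+2$ in the corresponding position) still works and has the small advantage of matching the theorem's claimed line-size bound on $M|Y$ directly.
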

\begin{proof}
We may assume that $M$ is simple.
Let $(L_1, L_2, \dots, L_k)$ be a maximal sequence of subsets of $E(M)$ so that $k \le m$ and for each $i\in [k]$ the set $L_{i}$ is a line of $M\del (L_1 \cup \cdots \cup L_{i-1})$ with at least $\binom{2(m-i)+1}{2}+2$ points (taking $L_0 = \varnothing$). 
Let $X = L_1\cup L_2 \cup \cdots \cup L_k$ and let $Y = E(M) - X$.
By maximality, each line of $M|Y$ has at most $\binom{2(m-(k+1))+1}{2} + 1 = \binom{2(m-k)}{2} + 1$ points.

Suppose $M|Y$ has a $U_{3,2(m-k)+1}$-restriction, where $U_{3,1}$ refers to a single point. Then $k \ge 1$ because $M$ has no $U_{3,2m+1}$-restriction. The lines $L_1, \dots, L_k$ are large enough that we can iteratively (in reverse order) add two points from each line to extend the $U_{3,2(m-k)+1}$-restriction of $M|Y$ to a $U_{3, 2m+1}$-restriction of $M$.
We now make this idea precise.
Write $Y = L_{k+1}$ for convenience. Let $j$ be minimum so that $M$ has a $U_{3, 2(m-j+1) + 1}$-restriction $N$ contained in $L_{j}~\cup~L_{j+1}~\cup~\cdots~\cup~L_{k+1}$.
Note that $j > 1$ because $M$ has no $U_{3,2m+1}$-restriction, and that $j$ is well-defined because $j = k+1$ is a valid option.
The points of $N$ span $\binom{2(m-j+1) + 1}{2}$ lines, and each line contains at most one point of $L_{j-1}$.
Since $L_{j-1}$ has at least $\binom{2(m-(j-1))+1}{2} + 2$ points, it contains points $a$ and $b$ not spanned by any line of $N$.
Also, $\{a,b\}$ does not span any point $e$ of $N$, because then $e \in L_{j-1}$, while $N$ is disjoint from $L_{j-1}$ by the definitions of $(L_1, L_2, \dots, L_k)$ and $Y$.
Therefore $N \cup \{a,b\}$ is a $U_{3, 2(m-j+1) +3}$-restriction of $M$ contained in 
$L_{j-1}~\cup~L_{j}~\cup~\cdots~\cup~L_{k+1}$, which contradicts the minimality of $j$.
Therefore $M|Y$ has no $U_{3,2(m-k)+1}$-restriction. 

Finally, we bound the number of points of $M|Y$. If $Y$ is a set of rank at most 2, which is true in particular when $k=m$ or $k=m-1$, then the bound follows by a straightforward computation. So we may assume that $Y$ has rank 3. Let $t$ be maximum so that $M|Y$ has a $U_{3,t}$-restriction $N$.
By the maximality of $t$, each point of $M|Y$ is spanned by a pair of elements of $N$.
So $M|Y$ is a union of $\binom{t}{2}$ lines, each with at most $\binom{2(m-k)}{2} - 1$ points not in $E(N)$.
Since $t \le 2(m-k)$, it follows that $M|Y$ has at most $\binom{2(m-k)}{2}(\binom{2(m-k)}{2} - 1) + 2(m-k)$ points.
\end{proof}

Note that if $k = m$ then $M|Y$ has $0$ points, and if $k = m - 1$ then $M|Y$ has no $U_{3,3}$-restriction and therefore has rank at most $2$.
So if $k \in \{m-1, m\}$, then $M$ is covered by $m$ lines.
In particular, when $m = 2$, Theorem \ref{thm: structure for D(3,2m+1)} implies that every rank-$3$ matroid with no $U_{3,5}$-restriction is either the union of two lines, or has at most $34$ points.

We have a similar result for matroids with no $U_{3,2m+2}$-restriction; we omit the proof as it is nearly identical to the proof of Theorem \ref{thm: structure for D(3,2m+1)}.

\begin{theorem} \label{thm: structure for D(3,2m+2)}
Let $m \ge 2$.
If $M$ is a rank-$3$ matroid with no $U_{3,2m+2}$-restriction, then there is an integer $k$ with $0 \le k \le m$ and disjoint sets $X$ and $Y$ with $X \cup Y = E(M)$ so that 
\begin{itemize}
    \item $M|X$ is a union of $k$ lines, and

    \item $M|Y$ has no $U_{3, 2(m-k)+2}$-restriction (if $k < m$), no $U_{2, \binom{2(m-k)}{2}+2}$-restriction, and at most $\binom{2(m-k)+1}{2}\big(\binom{2(m-k)+1}{2} - 1\big) + 2(m-k)+1$ points.
\end{itemize}
\end{theorem}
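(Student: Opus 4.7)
The plan is to adapt the proof of Theorem \ref{thm: structure for D(3,2m+1)} essentially verbatim, shifting the extraction threshold upward by one step to accommodate the change from $U_{3,2m+1}$ to $U_{3,2m+2}$. Assuming $M$ is simple, I would greedily construct a maximal sequence $(L_1, L_2, \ldots, L_k)$ with $k \le m$ such that each $L_i$ is a line of $M \del (L_1 \cup \cdots \cup L_{i-1})$ containing at least $\binom{2(m-i)+2}{2}+2$ points (taking $L_0 = \varnothing$), and set $X = L_1 \cup \cdots \cup L_k$ and $Y = E(M) - X$. The maximality of the sequence then forces every line of $M|Y$ to have fewer than $\binom{2(m-k)}{2}+2$ points, which is precisely the statement that $M|Y$ has no $U_{2,\binom{2(m-k)}{2}+2}$-restriction.

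Next, to show that $M|Y$ has no $U_{3,2(m-k)+2}$-restriction when $k<m$, I would run the same minimality argument as in Theorem \ref{thm: structure for D(3,2m+1)}. Writing $L_{k+1}=Y$, let $j$ be the least index for which $M|(L_j\cup\cdots\cup L_{k+1})$ contains a $U_{3,2(m-j+1)+2}$-restriction $N$; since $M$ itself has no $U_{3,2m+2}$-restriction, $j>1$. The set $E(N)$ spans $\binom{2(m-j+1)+2}{2}$ lines in $M$, each meeting $L_{j-1}$ in at most one point (two distinct lines of $M$ share at most one point, and the line $\cl_M(L_{j-1})$ is disjoint from $E(N)\subseteq L_j\cup\cdots\cup L_{k+1}$ by construction of the sequence). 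Since $|L_{j-1}|\ge\binom{2(m-j+1)+2}{2}+2$, we can choose two points $a,b\in L_{j-1}$ covered by none of these lines; then $E(N)\cup\{a,b\}$ is a $U_{3,2(m-(j-1)+1)+2}$-restriction inside $L_{j-1}\cup\cdots\cup L_{k+1}$, contradicting the minimality of $j$.

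Finally, I would bound $|Y|$. When $Y$ has rank at most $2$ the bound is immediate; otherwise, let $N$ be a maximum $U_{3,t}$-restriction in $M|Y$, so by the previous step $t\le 2(m-k)+1$. Maximality of $t$ forces every point of $M|Y$ to lie on a line spanned by some pair in $E(N)$, so $M|Y$ is covered by the $\binom{t}{2}$ such lines. Each line has at most $\binom{2(m-k)}{2}+1$ points and hence at most $\binom{2(m-k)}{2}-1\le\binom{2(m-k)+1}{2}-1$ points outside $E(N)$, yielding
\[
|Y| \le 2(m-k)+1 + \binom{2(m-k)+1}{2}\left(\binom{2(m-k)+1}{2}-1\right),
\]
as required. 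The main subtlety is calibrating the extraction threshold $\binom{2(m-i)+2}{2}+2$ so that the minimality argument bridges correctly from a residual $U_{3,2(m-k)+2}$-restriction all the way up to a forbidden $U_{3,2m+2}$-restriction of $M$; once this threshold is set, every other step is a direct parity-shifted translation of the proof of Theorem \ref{thm: structure for D(3,2m+1)}, which is presumably why the authors omit it.
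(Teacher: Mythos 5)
Your proposal is correct and is essentially the argument the paper has in mind (the paper explicitly says the proof is ``nearly identical'' to that of Theorem~\ref{thm: structure for D(3,2m+1)}): the threshold shift from $\binom{2(m-i)+1}{2}+2$ to $\binom{2(m-i)+2}{2}+2$ is exactly what is needed, and your minimality argument and final point count go through. One small thing to tidy up in a careful writeup: when $k=m$ the maximality of the sequence is potentially due to the constraint $k\le m$ rather than the absence of large lines, so the claimed $U_{2,\binom{2(m-k)}{2}+2}$-freeness of $M|Y$ does not follow from maximality alone; instead one should run the minimality argument also for $k=m$ with the degenerate interpretation of ``$U_{3,2}$'' as a pair of points (this is the analogue of the paper's treatment of $U_{3,1}$ as a single point in the proof of Theorem~\ref{thm: structure for D(3,2m+1)}), which then shows $|Y|\le 1$ and subsumes all the claims in that case.
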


Note that if $k = m$ then $M|Y$ has no $U_{2,2}$-restriction and therefore consists of at most one point, and if $k = m - 1$ then $M|Y$ has no $U_{3,4}$-restriction and is therefore the direct sum of a line and a point by Lemma \ref{lem: D_r(3,4) structure}.
So if $k \in \{m-1, m\}$, then $M$ is covered by $m$ lines and a point.

Roughly speaking, Theorems \ref{thm: structure for D(3,2m+1)} and \ref{thm: structure for D(3,2m+2)} reduce the problem of finding $\pi_{\M}(3, U_{3,t})$ to analyzing matroids with a small number of points.
We do this for $t = 5$.

\begin{lemma} \label{lemma: structure for D_3(3,5)}
Let $M$ be a rank-$3$ matroid with no $U_{3,5}$-restriction. Either
\begin{enumerate}[$(a)$]
\item $M$ has no $U_{2,5}$-minor, or 

\item $M$ is the union of two lines.
\end{enumerate}
\end{lemma}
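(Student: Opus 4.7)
My plan is to apply Theorem~\ref{thm: structure for D(3,2m+1)} with $m = 2$ to $M$, which yields an integer $k \in \{0, 1, 2\}$ together with a partition $E(M) = X \cup Y$ where $M|X$ is a union of $k$ lines and $M|Y$ is a residual set of bounded size. When $k = 2$, the theorem forces $|Y| = 0$, so $M$ is directly a union of two lines. When $k = 1$, $X$ is a single line with at least $5$ points and $|Y| \le 2$; since $|Y| \le 2$, the set $Y$ is contained in some line $L_2$ of $M$, giving $E(M) = X \cup L_2$ as a union of two lines. Thus in both of these cases $M$ falls under case $(b)$ of the lemma.

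The bulk of the work is the case $k = 0$, where every line of $M$ has at most $4$ points (by the maximality of the sequence used in the theorem's proof), so $M$ has no $U_{2,5}$-restriction. The only remaining source of a $U_{2,5}$-minor is contraction of a point $p$ on at least $5$ lines, so if no such $p$ exists, $M$ has no $U_{2,5}$-minor and we are in case $(a)$. Otherwise, let $L_1, \ldots, L_5$ be five lines through $p$; I claim $M$ is a union of two lines. Picking $a_i \in L_i - p$ for each $i$, no two $a_i$ are collinear with $p$, so the no-$U_{3,5}$ hypothesis forces some three of them to be collinear on a line $\ell$ not through $p$. A careful analysis of the remaining $5$-subsets of $\{p, a_1, \ldots, a_5\}$---repeatedly using that two distinct lines share at most one point, combined with the bound $|\ell| \le 4$---forces $\ell$ to contain four of the $a_i$, say $\ell = \{a_1, a_2, a_3, a_4\}$. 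A similar argument then shows each of $L_1, \ldots, L_4$ has exactly two points: if $L_1$ had an extra point $a_1' \notin \ell$, the $5$-sets $\{p, a_1', a_i, a_j, a_5\}$ for $\{i, j\} \subseteq \{2, 3, 4\}$ would each require a collinear triple, which forces at least two of the triples $\{a_1', a_i, a_5\}$ to lie on a common line through $a_1'$ and $a_5$; this line shares two points with $\ell$ and so must coincide with $\ell$, forcing $a_1' \in \ell$, a contradiction. A parallel hitting-set argument rules out a sixth line through $p$: the required collinear triples $\{a_6, a_i, a_5\}$ for at least three $i \in \{1,2,3,4\}$ would all lie on the unique line through $a_5$ and $a_6$, producing a line with at least $5$ points. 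Hence $E(M) = \ell \cup L_5$, a union of two lines.

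The principal obstacle is the detailed collinearity-chasing in the $k = 0$ case, especially the step that the initial collinear triple extends to a $4$-point line through four of the $a_i$. This hinges on the constraint $|\ell| \le 4$: any two distinct non-$p$-lines sharing two of the $a_i$ would coincide, so the various required collinearities cascade quickly into forcing a line with $\ge 5$ points, contradicting the case hypothesis.
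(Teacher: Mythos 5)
Your proof is correct, but it takes a genuinely different route from the paper's.

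The paper's argument is self-contained and shorter: it introduces the notion of a \emph{good pair} of lines $(L_1,L_2)$ with $|L_1 - L_2|\ge 4$ and $|L_2 - L_1|\ge 2$, and shows (i) a good pair forces $M$ to be the union of two lines, (ii) a line with $\ge 5$ points yields a good pair, and (iii) if there is neither a good pair nor a $5$-point line, then a point $e_0$ on $\ge 5$ lines cannot exist: taking a transversal $T$ of five lines through $e_0$, the simple $5$-point rank-$3$ matroid $M|T$ must contain a $U_{3,4}$-restriction (which together with $e_0$ gives $U_{3,5}$) or a $4$-point line (which, combined with the remaining transversal element and $e_0$, gives a good pair). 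The good-pair device unifies (ii) and the second half of (iii) and keeps the case analysis short.

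Your version reaches the same trichotomy (5-point line / point on 5 lines / neither) via a different path, invoking the greedy decomposition from Theorem~\ref{thm: structure for D(3,2m+1)} for $m=2$, and then doing a more hands-on collinearity cascade in the $k=0$ case: forcing three of $a_1,\dots,a_5$ onto a line $\ell$, upgrading $\ell$ to four of the $a_i$ via vertex covers of $K_3$, showing $L_1,\dots,L_4$ are $2$-point lines, and ruling out a sixth line via a vertex cover of $K_4$ that would put five points on a single line. I checked the vertex-cover steps and the collinearity eliminations, and they all go through; the argument is correct. Two presentational remarks. First, when you conclude in the $k=0$ case that ``every line of $M$ has at most $4$ points,'' this follows from the \emph{greedy construction inside} the proof of Theorem~\ref{thm: structure for D(3,2m+1)}, not from its statement, which only rules out $U_{2,8}$-restrictions when $k=0$ and $m=2$; you do flag this (``by the maximality of the sequence''), but it would be cleaner to bypass the theorem entirely and simply case-split on whether $M$ has a line with at least $5$ points. (That first case is also easy to handle directly, without Theorem~\ref{thm: structure for D(3,2m+1)}: if $L$ has $\ge 5$ points and three independent points $x,y,z$ lie off $L$, the three lines they span hit $L$ in at most $3$ points, so two leftover points of $L$ together with $x,y,z$ give a $U_{3,5}$-restriction.) Second, for $k\in\{1,2\}$ the sets your sequence produces are rank-$2$ sets, not necessarily flats of $M$; to phrase the conclusion as ``$M$ is a union of two lines'' you should close them up in $M$, which is straightforward but should be stated. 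Net effect: your proof trades the paper's compact $U_{3,4}$-or-$4$-point-line dichotomy for an explicit hitting-set/collinearity chase that is more elementary in flavour but carries more bookkeeping.
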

\begin{proof}
We may assume that $M$ is simple, because the statement holds for $M$ if and only if it holds for $\si(M)$. 
An ordered pair $(L_1, L_2)$ of lines of $M$ is \emph{good} if $|L_1 - L_2| \ge 4$ and $|L_2 - L_1| \ge 2$.

First suppose that $M$ has a good pair $(L_1, L_2)$ of lines.
Suppose there is a point $w$ of $M$ outside of $L_1 \cup L_2$.
Let $x,y \in L_2 - L_1$.
The lines $\cl_M(\{w,x\})$ and $\cl_M(\{w,y\})$ each contain at most one point in $L_1 - L_2$, and so there are points $s,t \in L_1 - L_2$ that are not spanned by any pair of $\{w,x,y\}$.
But then $\{w,x,y,s,t\}$ forms a $U_{3,5}$-restriction of $M$, a contradiction.
So $E(M) \subseteq L_1 \cup L_2$, and therefore $(b)$ holds when $M$ has a good pair of lines.

Next suppose that $M$ has a line $L_1$ with $|L_1| \ge 5$.
There are points $x$ and $y$ of $M$ outside of $L_1$, or else $(b)$ holds.
Let $L_2 = \cl_M(\{x,y\})$.
Then $(L_1, L_2)$ is a good pair of lines of $M$, and so $(b)$ holds.

So we may assume that $M$ has no good pairs and no lines with at least five points.
We will show that $(a)$ holds. Suppose, towards a contradiction, that $M$ has a $U_{2,5}$-minor.
Since $M$ has a $U_{2,5}$-minor, there is a point $e_0$ so that $M/e_0$ has a $U_{2,5}$-restriction.
Then $e_0$ is on at least five lines of $M$, because the lines of $M$ through $e_0$ correspond to the points of $M/e_0$. 
Let $T$ be a transversal of five of the lines through $e_0$.
Since $M$ has no line with at least five points, $M|T$ has rank $3$.
And since $M|T$ is a simple rank-$3$ matroid with five points, it has a $U_{3,4}$-restriction or contains a line with four points. In the former case, adding $e_0$ to the $U_{3,4}$-restriction gives $U_{3,5}$, because no pair from $T$ spans $e_0$, and this is a contradiction. In the latter case, $(L_1, L_2)$ is a good pair of lines, where $L_1 \subseteq T$ is the line with four points and $L_2 = \cl_M((T\setminus L_1) \cup \{e_0\})$, and this is again a contradiction.
\end{proof}

We can use the previous lemma to find $\pi_{\M}(n,3, U_{3,5})$.
Recall that $\ex_{\M}(n,3,U_{2,5}) \le b(3, 3) \cdot ( \frac{n}{13} )^3$, by Theorem \ref{thm: finite field daisy matroid Turan density}, where $b(3,3)$ is the number of bases of the ternary projective plane, which is $\frac{13 \cdot 12 \cdot 9}{3!} = 234$.

\begin{theorem} \label{thm: D_3(3,5)}
If $n \ge 14$, then
\[\ex_{\M}(n,3,U_{3,5}) \le \left(\frac{n}{2}\right)^3 - \left(\frac{n}{2}\right)^2.\] 
Furthermore, equality holds when $n$ is even, so $\pi_{\M}(3, U_{3,5}) = 3/4$.
\end{theorem}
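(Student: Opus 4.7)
I would apply Lemma~\ref{lemma: structure for D_3(3,5)} to split into the two structural cases and bound $b(M)$ separately in each. In case~(a), where $M$ has no $U_{2,5}$-minor, Theorem~\ref{thm: finite field daisy matroid Turan density} with $r=3$, $t=3$ gives $b(M) \le b(3,3)\cdot (n/13)^3 = 234\cdot (n/13)^3$, and a direct computation $(n/2)^3 - 234(n/13)^3 = 325\, n^3/17576$ shows this is at most $(n/2)^3 - (n/2)^2$ precisely when $n \ge 17576/1300$, i.e.\ when $n \ge 14$. The constant $234$ is simply $b(3,3)$, the number of bases of $\PG(2,3)$, so case~(a) is essentially free.

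The substance of the proof is case~(b), where $M$ is the union of two lines $L_1, L_2$. After discarding loops, I would pass to $\si(M)$ and record the parallel class sizes by a weight function $w\colon E(\si(M)) \to \mathbb{Z}_{\ge 1}$. Writing $A = L_1 \setminus L_2$, $B = L_2 \setminus L_1$, $C = L_1 \cap L_2$, and $W_X = \sum_{p \in X} w(p)$, the rank-$3$ assumption forces $L_1 \ne L_2$, so $|C| \in \{0,1\}$ and $W_A, W_B \ge 1$. The bases of $\si(M)$ are exactly the triples not contained in $L_1$ or $L_2$, which split into the three types (2A,1B), (1A,2B), (1A,1B,1C). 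Weighting these yields
\[
b(M) = e_2(w|_A)\, W_B + W_A\, e_2(w|_B) + W_A W_B W_C.
\]
Using $e_2(w|_A) \le \binom{W_A}{2}$ (which is tight precisely when $w \equiv 1$ on $A$) and $W_A + W_B = n - W_C$ gives
\[
b(M) \le W_A W_B \cdot \frac{n + W_C - 2}{2} \le \frac{(n - W_C)^2(n + W_C - 2)}{8},
\]
the last step by AM-GM. Differentiating in $W_C$ yields $(n-W_C)(-n - 3W_C + 4)$, which is negative for $W_C \ge 0$ and $n \ge 5$, so the expression is maximized at $W_C = 0$, giving $b(M) \le n^2(n-2)/8 = (n/2)^3 - (n/2)^2$.

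For the matching lower bound when $n$ is even, partition $E$ into two $(n/2)$-element sets and take the rank-$3$ truncation of $U_{2,n/2} \oplus U_{2,n/2}$: its long lines are exactly these two sets, pigeonhole rules out a $U_{3,5}$-restriction, and the basis count is $\binom{n}{3} - 2\binom{n/2}{3} = (n/2)^3 - (n/2)^2$. The density $\pi_\M(3,U_{3,5}) = 3/4$ follows immediately from $\binom{n}{3}^{-1}[(n/2)^3 - (n/2)^2] \to 3/4$. The main obstacle I anticipate is the bookkeeping in case~(b): one must correctly reduce the non-simple weighted problem to the simple case by invoking the bound on $e_2$, keep track of the three basis types corresponding to $c=0$ versus $c=1$, and verify that the derivative argument genuinely pushes the optimum to $W_C=0$ (so that the extremal configuration has two disjoint long lines).
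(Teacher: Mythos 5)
Your proof is correct and follows essentially the same route as the paper: split via Lemma~\ref{lemma: structure for D_3(3,5)}, dispose of case~(a) with Theorem~\ref{thm: finite field daisy matroid Turan density}, and optimize the two-line configuration in case~(b). The paper dismisses the case-(b) optimization as ``straightforward to check''; your weight-function bookkeeping and the reduction $b(M)\le\tfrac{(n-W_C)^2(n+W_C-2)}{8}$ followed by the derivative argument is a correct and clean way to carry it out.
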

\begin{proof}
Let $M$ be an $n$-element, rank-$3$ matroid with no $U_{3,5}$-restriction and $b(M) = \ex_{\M}(n,r,U_{3,5})$.
Either $(a)$ or $(b)$ holds from Lemma \ref{lemma: structure for D_3(3,5)}.
If $(a)$ holds, then $b(M) \le \ex_{\M}(n,r, U_{2,5})$ and therefore by Theorem~\ref{thm: finite field daisy matroid Turan density},
\[ \ex_{\M}(n,3,U_{3,5}) \le \ex_{\M}(n,3,U_{2,5}) = b(3,3) \Big(\frac{n}{13}\Big)^3 = 234\Big(\frac{n}{13}\Big)^3 <  \Big(\frac{n}{2}\Big)^3 - \Big(\frac{n}{2}\Big)^2,\]
where the last inequality holds because $n \ge 14$.
So $(b)$ holds, and $M$ is the union of lines $L_1$ and $L_2$.
It is straightforward to check that $b(M)$ is maximized when $L_1$ and $L_2$ are disjoint lines and have $\lfloor n/2\rfloor$ and $\lceil n/2 \rceil$ points, respectively.
Then $M$ has at most $(\frac{n}{2})^3 - (\frac{n}{2})^2$ bases, as desired.
\end{proof}

By Proposition \ref{prop: suspension_free_minor_free}, the basis hypergraph of a $U_{3,5}$-free matroid is $K^{(3)}_5$-subgraph-free. Tur\'{a}n~\cite{Turan1961} conjectured that $\pi(K^{(3)}_5) = 3/4$, and Theorem \ref{thm: D_3(3,5)} confirms this conjecture for matroid basis hypergraphs. The extremal construction of bases in the rank-$3$ matroid consisting of two lines is a special case of a family of examples due to Keevash and Mubayi, showing that $\pi(K^{(3)}_5) \ge 3/4$ -- see \cite[Section~9]{Keevash}. The actual value of $\pi(K^{(3)}_5)$ remains unknown.

\section{Directions for future work} \label{sec: open problems}
We close by discussing directions for future work relating to Problem \ref{prob: main problem}.

\subsection{Matroid truncations} \label{sec: truncations}
As discussed in Section~\ref{sec: matroid introducton}, some of the strongest constructions for daisy-free hypergraphs are naturally cast in matroidal terms. 
We believe that this connection can be further explored.
Ellis, Ivan, and Leader proved the following lower bound on $\pi(\cD_r(s,t))$.
\begin{theorem}[Ellis, Ivan, and Leader \cite{EllisIvanLeader}] \label{thm: Ivan-Ellis-Leader general lower bound}
Let $q$ be a prime power, $m$ a positive integer and $s$ even, and $t = \lfloor sq^{\frac{2m}{s}+1} + 1 \rfloor$.
Then
$$\lim_{r \to\infty} \pi(\cD_r(s,t)) \ge 1 - q^{-(m+1)} - O(q^{-(m+2)}).$$
\end{theorem}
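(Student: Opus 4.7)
The plan is to build on the matroidal construction behind Theorem~\ref{thm: ellis-ivan-leader prime power lower bound}, but work over an extension field $\bF_Q$ with $Q = q^{m+1}$. For each $r \ge s$ and each positive integer $k$, let $M_{r,k}$ be the rank-$r$ matroid on $n = k(Q^r-1)/(Q-1)$ elements obtained from $\PG{r-1}{Q}$ by replacing each point with $k$ parallel copies, and let $\cH_{r,k}$ be its basis hypergraph.

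First I would compute the edge density. Imitating the calculation behind Corollaries~\ref{cor: density for D(2, q+2)} and~\ref{cor: limit r to infty for D(2, q+2)} applied to $\bF_Q$-representable matroids,
\[
\lim_{r \to \infty}\lim_{k \to \infty} \frac{|\cH_{r,k}|}{\binom{n}{r}} = \prod_{i \ge 1}\bigl(1 - Q^{-i}\bigr).
\]
Euler's pentagonal-number identity expands this as $1 - Q^{-1} - Q^{-2} + O(Q^{-5})$, so substituting $Q = q^{m+1}$ yields $1 - q^{-(m+1)} - q^{-2(m+1)} + O(q^{-5(m+1)})$, which is at least $1 - q^{-(m+1)} - O(q^{-(m+2)})$ for every $m \ge 1$.

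Second I would verify daisy-freeness. Since $M_{r,k}$ is $\bF_Q$-representable, the MDS bound for arcs in $\PG{s-1}{Q}$ shows that $M_{r,k}$ has no $U_{s, Q+2}$-minor. Corollary~\ref{cor: uniform matroid minors and daisy subgraphs} then implies that $\cH_{r,k}$ is $\cD_r(s, Q+2)$-subgraph-free, and hence also $\cD_r(s, t)$-subgraph-free for every $t \ge Q+2 = q^{m+1}+2$. When $s = 2$, the theorem's $t = 2q^{m+1}+1$ exceeds $q^{m+1}+2$, so the matroidal construction finishes the proof.

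The main obstacle is handling $s \ge 4$ when $m$ is large enough that $sq^{2m/s+1} < q^{m+1}+1$. In that regime, the daisy $\cD_r(s, \lfloor sq^{2m/s+1}+1\rfloor)$ has strictly fewer petals than $\cD_r(s, q^{m+1}+2)$, so the MDS-based avoidance coming from $\bF_Q$-representability is too weak. Overcoming this likely requires a more refined construction -- for instance, a judicious matroid truncation, or an algebraic-geometry-code matroid whose rank-$s$ minors admit no arcs of size exceeding $sq^{2m/s+1}$ -- while still retaining edge density $1 - q^{-(m+1)} - O(q^{-(m+2)})$. This is where the technical heart of the Ellis--Ivan--Leader argument would lie.
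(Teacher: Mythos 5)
You have correctly located the gap, and your guess at the fix is right: the construction underlying Theorem~\ref{thm: Ivan-Ellis-Leader general lower bound}, as the paper describes it in Section~\ref{sec: truncations}, is a matroid truncation. Ellis, Ivan, and Leader work over the \emph{base} field $\bF_q$ rather than the extension field $\bF_{q^{m+1}}$: they take $\PG{m+r-1}{q}$, blow up each point into a parallel class, and pass to the $m$-th truncation $T^{(m)}(\PG{m+r-1}{q})$, whose bases are the rank-$r$ independent sets of $\PG{m+r-1}{q}$. This has density $\prod_{j\ge m+1}(1 - q^{-j}) = 1 - q^{-(m+1)} - O(q^{-(m+2)})$, matching yours to leading order, and the technical core of the argument -- which your sketch does not attempt -- is the coding-theoretic estimate showing that the truncated matroid has no $U_{s,t}$-minor for $t$ beyond $\lfloor sq^{2m/s+1}+1\rfloor$.

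Your extension-field substitute fails for $s\ge 4$ for exactly the reason you name: $\PG{r-1}{q^{m+1}}$ contains $U_{s,\,q^{m+1}+1}$ as a restriction (via normal rational curves / MDS arcs), and $q^{m+1}+1$ exceeds the target $t = \lfloor sq^{2m/s+1}+1\rfloor$ as soon as $m$ is not tiny relative to $s$, so the extension-field matroid is not $\cD_r(s,t)$-free. The $s=2$ case you do handle is already an immediate corollary of Theorem~\ref{thm: ellis-ivan-leader prime power lower bound} with field size $q^{m+1}$, so it does not exercise the new ideas. What is genuinely needed -- and what truncation buys -- is a construction whose large uniform minors are killed off even though its density stays near $1 - q^{-(m+1)}$: truncating $m$ times destroys the long arcs inherited from $\PG{m+r-1}{q}$ while preserving most independent sets. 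Quantifying exactly how truncation limits the uniform minors of $\PG{r+m-1}{q}$ is what the paper isolates as Problem~\ref{prob: projections of projective geometries}, and filling that in would be the technical heart you would need to supply.
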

In the special case that $s = m = k$ and $q = 2$, this shows that $\lim_{r \to\infty}\pi(\cD_r(k, 8k+1)) = 1 - O(2^{-k})$.
Improving on this, Alon \cite{Alon} showed that $\lim _{r \to \infty} \pi(\cD_r(k + 2 \log_2 k, 2k + 3\log_2 k)) \ge 1 - 2^{-k}$ where $k$ is some large fixed integer. Both of these results on link Tur{\'a}n densities are of interest due to applications to lower bounding vertex Tur{\'a}n densities of the hypercube, see \cite{EllisIvanLeader} for details. For recent progress regarding link Tur{\'a}n densities and vertex Tur{\'a}n problems in the hypercube, see \cite{Cameron,EllisIvanLeader_smallhypercubes}.

Just as in Theorem~\ref{thm: ellis-ivan-leader prime power lower bound}, both constructions are matroidal.
When $n = k(q^{m+r} - 1)$ for a positive integer $k$, they consider the $\bF_q$-representable matroid whose elements are the nonzero vectors in $\bF_q^{m+r}$ with $k$ copies of each vector.
(This matroid simplifies to the projective geometry $\PG{m+r-1}{q}$.)
They then take the $m$-th truncation of this matroid, where the \emph{$m$-th truncation} of a rank-$(r+m)$ matroid $M$ is the rank-$r$ matroid $T^{(m)}(M)$ whose bases are the rank-$r$ independent sets in $M$ \cite[Proposition 7.3.10]{Oxley2011}.
Then Theorem \ref{thm: Ivan-Ellis-Leader general lower bound} follows by showing that truncations do not lead to $U_{s,t}$-minors with $t$ arbitrarily large. Alon's bound is obtained by improving the analysis in their proof utilizing the classic Plotkin bound \cite{Plotkin} from the theory of error-correcting codes. We propose the following problem.
\begin{problem} \label{prob: projections of projective geometries}
Let $q$ be a prime power and let $r$ and $m$ be integers with $r \ge 2$ and $m \ge 0$.
For all $s$, what is the maximum $t$ so that $T^{(m)}(\PG{r+m-1}{q})$ has a $U_{s,t}$-minor?
\end{problem}

It will certainly be difficult to give an exact answer. Even when $m=0$, the problem is equivalent to the question over which fields the uniform matroid $U_{s,t}$ is representable~\cite[Problem~6.5.19]{Oxley2011}, an open problem in projective geometry and coding theory. However, as shown by Theorem~\ref{thm: Ivan-Ellis-Leader general lower bound}, even coarse bounds can have significant consequences for vertex Tur\'an problems on the hypercube.

\subsection{Discrete geometry} \label{sec: covering number}

By restricting the parameter $\ex_{\M}(n, s, U_{s,t})$ to affine matroids over $\bR$ (see \cite[Proposition 1.5.1]{Oxley2011}), we obtain the following natural problem.

\begin{problem} \label{prop: discrete geometry problem}
Let $n,s,t$ be positive integers with $t \ge s \ge 3$.
Among all sets of $n$ vectors in $\bR^{s-1}$ with no $t$ in general position, what is the maximum number of $s$-sets in general position?
\end{problem}

One can ask this question for vectors over other fields as well, but we focus on $\bR$ for simplicity.
In the special case $s = 3$, this problem is asking for the maximum number of non-collinear triples among $n$ points in $\bR^2$ with no $t$ in general position.
This is in some sense dual to the following problem of Erd\H os \cite{Erdos1988}: among all sets of $n$ points in $\bR^2$ with no four on a line, what is the maximum cardinality of a subset in general position, in the worst case?
See \cite{BaloghSolymosi} for recent progress.

By noting that the lower bound of Proposition~\ref{prop: lower bounds for D_3(3,t)} also holds for $\bR$-representable matroids, Theorem~\ref{thm: D_3(3,5)} solves Problem~\ref{prop: discrete geometry problem} in the case $(t,s) = (5, 3)$. 
We expect that the parameter $\ex_{\M}(n, s, U_{s,t})$ can be used to solve Problem~\ref{prop: discrete geometry problem} in other cases as well.

We mention one other direction for future work relating to Section \ref{sec: D(3,t)}.
For a positive integer $a$, the \emph{$a$-covering number} of a matroid $M$, denoted $\tau_a(M)$, is the minimum number of sets of rank at most $a$ required to cover the ground set of $M$.
In particular, $\tau_1(M)$ is the number of points of $M$, and $\tau_2(M)$ is the number of lines needed to cover the ground set of $M$.
Theorems \ref{thm: structure for D(3,2m+1)} and \ref{thm: structure for D(3,2m+2)} imply that if $M$ is a rank-$3$ matroid with no $U_{3,t}$-restriction, then $\tau_2(M) = O(t^2)$.
On the other hand, the $U_{3,t}$-free matroids exhibiting the bounds of Proposition \ref{prop: lower bounds for D_3(3,t)} have $2$-covering number $\Omega(t)$. 
This leads to the following problem.

\begin{problem} \label{prob: 2-covering number}
Find the maximum $2$-covering number for rank-$3$ matroids with no $U_{3,t}$-restriction.
\end{problem}

More generally, one could ask for the maximum $(s-1)$-covering number for rank-$s$ matroids with no $U_{s,t}$-restriction, but we will focus on $s = 3$ for simplicity.
Problem \ref{prob: 2-covering number} has several interesting variants and extensions.
It is interesting even if we restrict to the class of $\bR$-representable matroids: what is the maximum number of lines needed to cover $n$ points in $\bR^2$ with no $t$ in general position?
One could also allow rank to increase and ask for the maximum $2$-covering number for rank-$r$ matroids with no $U_{3,t}$-minor; this would be an analogue of Theorem \ref{thm: Geelen-Nelson}.

Problem \ref{prob: 2-covering number} also fits nicely with more general but coarser results of Geelen and Nelson \cite{GeelenNelson, Nelson} which show that there are very few possibilities for the behavior of $a$-covering numbers for matroids in minor-closed classes. 
We should also mention that while Problems \ref{prob: main problem} and \ref{prob: 2-covering number} ask for specific properties of the class of matroids with no $U_{s,t}$-minor, there are a number of deep conjectures about the overall structure of matroids in this class; see \cite{Geelen, GeelenGerardsWhittle}.

\subsection{Exact Tur\'{a}n basis numbers}

The bound on $\ex_\M(n,r,U_{2,t+2})$ in Theorem~\ref{thm: finite field daisy matroid Turan density} is tight whenever $t$ is a prime power and $n$ is a multiple of $\frac{t^r-1}{t-1}$; in this case the bound is attained by the matroid that is obtained from $\PG{r-1}{t}$ by replacing each element by a parallel class containing $n/\frac{t^r-1}{t-1}$ elements. In this section, we speculate on the exact value of $\ex_\M(n,r,U_{2,t+2})$ when $n < \frac{t^r-1}{t-1}$.

Let $t$ be a prime power, and let $r$ and $c$ be integers such that $1 \le c \le r-1$. Let $F$ be a rank-$(r-c)$ flat of $\PG{r-1}{t}$, and let $\BBG{r}{t}{c}$ be the matroid obtained by restricting $\PG{r-1}{t}$ to the complement of $F$. Note that $\BBG{r}{t}{c}$ has $\frac{t^r-t^{r-c}}{t-1}$ points and rank~$r$.

The matroids $\BBG{r}{t}{c}$ play a role in extremal matroid theory similar to that of balanced complete multipartite graphs in extremal graph theory: the matroid $\BBG{r}{t}{c}$ does not contain $\PG{c}{t}$ as a submatroid, and Bose and Burton~\cite{BoseBurton1966} showed that it is the densest submatroid of $\PG{r-1}{t}$ with this property.
We expect that $\BBG{r}{t}{c}$ has the most bases over all $\frac{t^r - t^{r-c}}{t-1}$-element, rank-$r$ matroids with no $U_{2, t+2}$-minor.

\begin{conjecture}\label{conj: bose_burton}
	Let $t \ge 2$ be prime power, and let $r$ and $c$ be integers such that $1 \le c \le r-1$. Then
	\begin{equation*}
		\ex_\M\left(\frac{t^r-t^{r-c}}{t-1},r,U_{2,t+2}\right) = b(\BBG{r}{t}{c}).
	\end{equation*}
\end{conjecture}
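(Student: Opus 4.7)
I would prove the conjecture by establishing the upper bound $b(M) \le b(\BBG{r}{t}{c})$ for every $n$-element, rank-$r$, $U_{2,t+2}$-minor-free matroid $M$, where $n = \frac{t^r - t^{r-c}}{t-1}$; the matching lower bound is immediate, since $\BBG{r}{t}{c}$ is a restriction of $\PG{r-1}{t}$ and hence is $\bF_t$-representable and $U_{2,t+2}$-minor-free by \cite[Corollary~6.5.3]{Oxley2011}.

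Writing $N = \si(M)$ and letting $y_v \ge 1$ be the size of the parallel class of each $v \in V(N)$, so $\sum_v y_v = n$, we have $b(M) = \sum_{B \in \cB(N)} \prod_{v \in B} y_v$. The continuous Lagrangian bound of Theorem~\ref{thm: Lagrangian upper bound} gives only $b(M) \le n^r \lambda(N) \le n^r \lambda(\PG{r-1}{t})$, which is strictly larger than $b(\BBG{r}{t}{c}) = n^r \lambda(\BBG{r}{t}{c})$ because $\lambda(\BBG{r}{t}{c}) < \lambda(\PG{r-1}{t})$ for $c < r$ (by monotonicity of $\lambda$ under matroid restriction). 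So a sharper argument must exploit the integrality of the $y_v$.

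My main plan is induction on $r$ via the identity $r \cdot b(M) = \sum_{v \in E(M)} b(M/v)$, combined with a sharp bound on the bases of rank-$(r-1)$, $(n-1)$-element, $U_{2,t+2}$-minor-free matroids. As a sanity check: for $r=3, t=2, c=2$ (so $n=6$), the maximum number of bases of a rank-$2$, $5$-element, $U_{2,4}$-minor-free matroid is $8$ (attained by parallel-class sizes $(2,2,1)$, i.e.\ a balanced extension of $\PG{1}{2}$), whence $3 b(M) \le 6 \cdot 8$ and $b(M) \le 16 = b(\BBG{3}{2}{2})$. An analogous calculation at $r=4$, $t=2$, $c=3$ shows that the bound $b(M') \le 176$ for $13$-element rank-$3$ binary matroids (attained by the balanced parallel extension of $\PG{2}{2}$ with weights $(2,2,2,2,2,2,1)$) yields exactly $b(M) \le 616 = b(\BBG{4}{2}{3})$. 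In both cases, the averaging step reproduces the conjectured bound tightly.

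The main obstacle is that $n-1$ is generally \emph{not} of the form $\frac{t^{r-1}-t^{r-1-c'}}{t-1}$, so a direct inductive application of the conjecture is unavailable. I would therefore strengthen the induction hypothesis to a sharp upper bound on $b(N)$ for rank-$(r-1)$ $U_{2,t+2}$-minor-free matroids on an \emph{arbitrary} number of elements, describing the extremal matroid as an appropriately balanced parallel extension of a subgeometry of $\PG{r-2}{t}$; this interpolated statement strictly generalizes the conjecture and must be proved simultaneously. An alternative route is to first reduce, via Kung's theorem and the prime-power hypothesis, to the case where $N$ is a restriction $\PG{r-1}{t}|S$, and then prove the auxiliary extremal claim that among $n$-subsets $S \subseteq V(\PG{r-1}{t})$, the complement of a rank-$(r-c)$ flat maximizes $|\cB(\PG{r-1}{t}|S)|$. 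This basis-level analogue of the classical Bose-Burton theorem, provable via an exchange argument or inclusion-exclusion over hyperplanes, is where I expect the chief technical difficulty.
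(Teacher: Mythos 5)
This statement is presented in the paper as Conjecture~\ref{conj: bose_burton}, not as a theorem; the paper offers no proof, only the remark that the cases $t=2$, $r\le 3$ can be checked (immediately for $r\le 2$, and via a reduction to simple matroids plus case analysis for $r=3$). So there is no ``paper's proof'' against which to judge your argument.

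Your proposal is a reasonable plan of attack, and your sanity checks at $(r,t,c)=(3,2,2)$ and $(4,2,3)$ are arithmetically correct: $b(\BBG{3}{2}{2})=16$, $b(\BBG{4}{2}{3})=616$, and the averaging identity $r\cdot b(M)=\sum_{v} b(M/v)$ does reproduce the conjectured bounds from the stated auxiliary bounds on $(n-1)$-element rank-$(r-1)$ matroids. But as you yourself note, the argument is not closed. The auxiliary bounds you invoke (e.g.\ that every rank-$3$, $13$-element, binary matroid has at most $176$ bases, attained by the $(2,2,2,2,2,2,1)$ parallel extension of $F_7$) are not established; they are exactly the ``interpolated'' extremal statement for general $n$ that the induction requires, and proving them is no easier than the conjecture itself. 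Similarly, the proposed reduction to a Bose--Burton-type extremal problem over $n$-subsets of $\PG{r-1}{t}$ presumes that the extremal matroid is a restriction of $\PG{r-1}{t}$, which, while plausible, requires justification beyond Kung's theorem (Kung only caps the number of points, and not every simple rank-$r$, $U_{2,t+2}$-minor-free matroid is $\bF_t$-representable when $r\ge 3$, though Kung's theorem combined with the pigeonhole on line sizes does force a lot of structure in the extremal case). In short: the skeleton of the approach is sensible and the verification instances are correct, but the proposal leaves the conjecture's chief difficulty unresolved, as you candidly acknowledge, so this does not constitute a proof.
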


The statement holds when $t=2$ and $r\le 3$; this is immediate for $r \le 2$, while for $r=3$ it can be shown that a matroid attaining the maximum number of bases is simple, after which the statement follows from a case analysis. When $t = 2$, Conjecture~\ref{conj: bose_burton} is equivalent to the following statement.

\begin{conjecture}\label{conj: binary bose_burton}
Let $r$ and $c$ be integers so that $1 \le c \le r - 1$.
Then the $(2^r - 2^{r - c})$-element subset of $\bF_2^r$ with the most bases is the set obtained from $\bF_2^r$ by deleting a copy of $\bF_2^{r - c}$. 
\end{conjecture}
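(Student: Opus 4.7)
The plan is to prove Conjecture~\ref{conj: binary bose_burton} by a local exchange argument: show that if $S$ maximizes the number of bases, then $T := \mathbb{F}_2^r \setminus (S \cup \{0\})$ must equal $F \setminus \{0\}$ for some $(r-c)$-dimensional subspace $F$. The starting observation is that if $T \cup \{0\}$ is not a subspace then there exist distinct nonzero $u, v \in T$ with $w := u + v \in S$. Given such a triple, one considers the swap $S' := (S \setminus \{w\}) \cup \{u\}$, and the aim is to prove $b(S') \geq b(S)$ and then to show via a potential argument that iterated swaps terminate in a subspace configuration.

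Setting $A := S \setminus \{w\}$ and decomposing the bases of $S$ and $S'$ by the hyperplane spanned by their remaining $r-1$ elements, one obtains
\[
b(S') - b(S) \;=\; \sum_{H : u, v \notin H} b_H(A \cap H) \;-\; \sum_{H : u \in H,\, v \notin H} b_H(A \cap H),
\]
where the sums run over linear hyperplanes of $\mathbb{F}_2^r$ and $b_H(\cdot)$ counts bases of the rank-$(r-1)$ matroid $H$. The identity $u+v+w=0$ forces every hyperplane to contain either all three of $\{u, v, w\}$ or exactly one of them, giving precisely this decomposition. To establish non-negativity, I would exploit the linear involution $\tau(x) := x + \alpha(x)\, v$, where $\alpha$ is a linear form with $\alpha(u) = 1$ and $\alpha(v) = 0$; the map $\tau$ swaps $u$ with $w$, fixes $v$, and induces a bijection between the two hyperplane families above. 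Since $\tau|_H : H \to \tau(H)$ is a linear isomorphism, one has $b_H(A \cap H) = b_{\tau(H)}(\tau(A) \cap \tau(H))$, which reduces the desired inequality to $\sum_{H : u, v \notin H} b_H(A \cap H) \geq \sum_{H : u, v \notin H} b_H(\tau(A) \cap H)$. Once this is proved, iterating the swap with a potential such as $|\mathrm{span}(T \cup \{0\})| - |T \cup \{0\}|$ (broken by a lexicographic secondary potential) converges to a subspace configuration, whose base count can be computed directly via the quotient $\mathbb{F}_2^r / F$ to confirm it matches $b(\BBG{r}{2}{c})$.

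The main obstacle is the swap inequality itself, since $\tau$ generally does not fix $A$: the symmetric difference $(A \triangle \tau(A)) \cap H$ consists of pairs $\{x, x+v\}$ in $H \cap \alpha^{-1}(1)$ with exactly one member in $T \cup \{0\}$, and bounding their net effect on $b_H$ requires careful analysis. Small cases ($r=3$, $c=1$) confirm that individual swaps are non-strict but the global procedure still lands on the conjectured extremum; making this global argument precise, perhaps via an amortized potential or by averaging over all admissible choices of $(u, v, w)$ simultaneously, appears to be the crux. A backup plan, should the direct swap argument prove intractable, is to proceed by induction on $r$: choose a hyperplane $H \subseteq \mathbb{F}_2^r$, apply the inductive hypothesis to the rank-$(r-1)$ configuration $S \cap H$, and balance this against the contribution from $S \setminus H$ using a convexity argument in the quotient $\mathbb{F}_2^r / H \cong \mathbb{F}_2$.
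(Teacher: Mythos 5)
The statement is Conjecture~\ref{conj: binary bose_burton}, which the paper leaves open: the surrounding text records only that the case $c=1$ was recently proved by Ellis, Ivan, and Leader (\cite[Theorem~5]{EIL25}), and that the conjecture is equivalent to Conjecture~\ref{conj: bose_burton} for $t=2$, which was checked by hand for $r \le 3$. So there is no proof in the paper to compare against, and a complete argument here would be a new result.

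Your proposal is an honest plan rather than a proof, and the gaps you flag yourself are genuine. The hyperplane decomposition of $b(S') - b(S)$ and the reduction via the involution $\tau(x) = x + \alpha(x)\,v$ are both correct: $\tau$ does biject $\{H : u, v \notin H\}$ with $\{H : u \in H,\ v \notin H\}$, and it transports $b_H(A \cap H)$ to $b_H(\tau(A) \cap H)$. But the resulting inequality $\sum_{H : u,v\notin H} b_H(A \cap H) \ge \sum_{H : u,v\notin H} b_H(\tau(A) \cap H)$ is exactly the hard content, and no argument for it is offered; it is not even clear that it holds for an arbitrary non-optimal $S$. Separately, the termination argument fails as stated: the potential $|\mathrm{span}(T \cup \{0\})| - |T \cup \{0\}|$ is \emph{invariant} under the swap, since replacing $u$ by $w = u+v$ in $T$ changes neither $|T|$ nor $\mathrm{span}(T \cup \{0\})$ (because $w = u+v$ and $u = v+w$ with $v$ remaining in $T$). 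Moreover, the swap at the pair $(u,v)$ has the swap at $(w,v)$ as a two-sided inverse, so the iteration can 2-cycle indefinitely; the ``lexicographic secondary potential'' would have to be specified and shown to decrease. If you want to pursue this direction, the natural first step is to examine the Ellis--Ivan--Leader proof of the $c=1$ case and see whether their method extends, rather than building the exchange argument from scratch.
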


Equivalently, the set
 \[ \{ x \in \bF_2^r \setminus \{0\} : x_i = 1 \text{ for some }  i \in [c]\}\]
maximizes the number of bases over all $(2^r-2^{r-c})$-element sets.
Conjecture \ref{conj: binary bose_burton} was recently proved in the case $c = 1$ by Ellis, Ivan, and Leader \cite[Theorem 5]{EIL25}.
They in fact prove more generally that for all integers $d$ with $1 \le d \le r$, the $2^{r-1}$-element subset of $\bF_2^r$ with the most $d$-element independent sets is the set obtained from $\bF_2^r$ by deleting a copy of $\bF_2^{r-1}$, and we expect that this is also true for larger values of $c$.

\subsection{Minor-closed classes}
For a minor-closed class $\cM$ of matroids, let $\ex_{\cM}(n, r)$ be the maximum number of bases of an $n$-element, rank-$r$ matroid in $\cM$. Our proof of Theorem~\ref{thm: finite field daisy matroid Turan density} relies heavily on Theorem~\ref{thm: Kung}, which provides an upper bound on the number of elements of a rank-$r$ matroid with no $U_{2,t+2}$-minor.
The function $h_{\cM}$ that maps a positive integer $r$ to the maximum number of elements of a simple matroid in $\cM$ with rank at most $r$ is the \emph{extremal function} or \emph{growth rate function} of $\cM$.
Our proof of Theorem \ref{thm: finite field daisy matroid Turan density} leads to the following question: can $h_{\cM}(r)$ be used to find $\ex_{\cM}(n, r)$?
We give a concrete example in which this may be of interest.
Let $\cN$ be the class of binary matroids with no $\PG{2}{2}$-minor ($\PG{2}{2}$ is also commonly called the Fano plane, and denoted by $F_7$).
Kung~\cite[Corollary 6.4]{Kung1993} proved that $h_{\cN}(r) = \binom{r+1}{2}$ with equality holding for the graphic matroid of a complete graph on $r+1$ vertices.
This leads to the following natural problem.

\begin{problem}
    Let $n,r$ be integers with $n \ge r \ge 1$. Determine $\ex_{\cN}(n,r)$ for the class $\cN$ of binary matroids with no $F_7$-minor.
\end{problem}

If for every large $n$ there is an $n$-element, rank-$r$ graphic matroid $M$ with $b(M) = \ex_{\cN}(n, r)$, a solution would generalize Kelmans' result \cite{Kelmans} on the maximum number of spanning trees of an $(r+1)$-vertex graph with $n$ edges.

More generally, the Growth Rate Theorem \cite{Geelen-Kung-Whittle2009} shows that there are only four possibilities for the behavior of $h_{\cM}$ for a minor-closed class $\cM$: linear, quadratic, exponential, or infinite.
We close with the following open-ended question: does the classification of extremal functions lead to a corresponding classification of Tur\'an basis densities $\pi_{\cM}(r)$ for minor-closed classes of matroids?

\subsection*{Acknowledgements}
The authors would like to thank Dylan King, Imre Leader, and Ethan White for the helpful discussions and suggestions.  The third author was supported in part by NSF RTG DMS-1937241 and an AMS-Simons Travel Grant.

\bibliographystyle{abbrv}
\bibliography{citations}

\begin{thebibliography}{10}

\bibitem{Alon}
N.~Alon.
\newblock Erasure list-decodable codes and {T}ur{\'a}n hypercube problems.
\newblock {\em Finite Fields and Their Applications}, 100:102513, 2024.

\bibitem{AxenovichLiu}
M.~Axenovich and D.~Liu.
\newblock Visibility in hypercubes.
\newblock {\em arXiv:2402.04791}, 2024.

\bibitem{BakerHarmanPintz2001}
R.~C. Baker, G.~Harman, and J.~Pintz.
\newblock The difference between consecutive primes. {II}.
\newblock {\em Proc. London Math. Soc. (3)}, 83(3):532--562, 2001.

\bibitem{BaloghBohmanBollabasZhao}
J.~Balogh, T.~Bohman, B.~Bollob\'{a}s, and Y.~Zhao.
\newblock Tur\'{a}n densities of some hypergraphs related to {$K_{k+1}^k$}.
\newblock {\em SIAM J. Discrete Math.}, 26(4):1609--1617, 2012.

\bibitem{BaloghClemenLidicky}
J.~Balogh, F.~C. Clemen, and B.~Lidick\'{y}.
\newblock Hypergraph {T}ur\'{a}n problems in {$\ell_2$}-norm.
\newblock In {\em Surveys in combinatorics 2022}, volume 481 of {\em London
  Math. Soc. Lecture Note Ser.}, pages 21--63. Cambridge Univ. Press,
  Cambridge, 2022.

\bibitem{BaloghHalfpapLidickyPalmer}
J.~Balogh, A.~Halfpap, B.~Lidick{\'y}, and C.~Palmer.
\newblock Positive co-degree densities and jumps.
\newblock {\em arXiv:2412.08597}, 2024.

\bibitem{BaloghSolymosi}
J.~Balogh and J.~Solymosi.
\newblock On the number of points in general position in the plane.
\newblock {\em Discrete Anal.}, pages No. 16, 20, 2018.

\bibitem{BansalPendavinghVanderPol2014}
N.~Bansal, R.~A. Pendavingh, and J.~G. van~der Pol.
\newblock An entropy argument for counting matroids.
\newblock {\em J. Combin. Theory Ser. B}, 109:258--262, 2014.

\bibitem{Bjorner}
A.~Bj\"{o}rner.
\newblock Some matroid inequalities.
\newblock {\em Discrete Math.}, 31(1):101--103, 1980.

\bibitem{BollabasLeaderMalvenuto}
B.~Bollob\'{a}s, I.~Leader, and C.~Malvenuto.
\newblock Daisies and other {T}ur\'{a}n problems.
\newblock {\em Combin. Probab. Comput.}, 20(5):743--747, 2011.

\bibitem{BorosGurvichHoMakinoMursic}
E.~Boros, V.~Gurvich, N.~B. Ho, K.~Makino, and P.~Mursic.
\newblock Sprague--{G}rundy function of matroids and related hypergraphs.
\newblock {\em Theoret. Comput. Sci.}, 799:40--58, 2019.

\bibitem{BoseBurton1966}
R.~C. Bose and R.~C. Burton.
\newblock A characterization of flat spaces in a finite geometry and the
  uniqueness of the {H}amming and the {M}ac{D}onald codes.
\newblock {\em J. Combinatorial Theory}, 1:96--104, 1966.

\bibitem{Cameron}
P.~J. Cameron.
\newblock Problems from {BCC30}.
\newblock {\em arXiv:2409.07216}, 2024.

\bibitem{CameronAjith}
P.~J. Cameron, {Aparna Lakshmanan S.}, and M.~V. Ajith.
\newblock Hypergraphs defined on algebraic structures.
\newblock {\em arXiv:2303.00546}, 2023.

\bibitem{CordovilFukudaMoreira}
R.~Cordovil, K.~Fukuda, and M.~L. Moreira.
\newblock Clutters and matroids.
\newblock {\em Discrete Math.}, 89(2):161--171, 1991.

\bibitem{Ding}
G.~Ding.
\newblock Bounding the number of bases of a matroid.
\newblock {\em Combinatorica}, 15(2):159--165, 1995.

\bibitem{DouthittOxley}
J.~D. Douthitt and J.~Oxley.
\newblock Bounding the number of bases of a matroid.
\newblock {\em Discrete Math.}, 345(1):Paper No. 112636, 3, 2022.

\bibitem{EllisIvanLeader}
D.~Ellis, M.-R. Ivan, and I.~Leader.
\newblock Tur{\'a}n densities for daisies and hypercubes.
\newblock {\em Bull. Lond. Math. Soc.}, 56(12):3838--3853, 2024.

\bibitem{EllisIvanLeader_smallhypercubes}
D.~Ellis, M.-R. Ivan, and I.~Leader.
\newblock Tur{\'a}n densities for small hypercubes.
\newblock {\em arXiv:2411.09445}, 2024.

\bibitem{EIL25}
D.~Ellis, M.-R. Ivan, and I.~Leader.
\newblock The maximum number of bases in a family of vectors.
\newblock {\em arXiv:2502.07768}, 2025.

\bibitem{Erdos1988}
P.~Erd\H{o}s.
\newblock Some old and new problems in combinatorial geometry.
\newblock In {\em Applications of discrete mathematics ({C}lemson, {SC},
  1986)}, pages 32--37. SIAM, Philadelphia, PA, 1988.

\bibitem{Geelen}
J.~Geelen.
\newblock Some open problems on excluding a uniform matroid.
\newblock {\em Adv. in Appl. Math.}, 41(4):628--637, 2008.

\bibitem{GeelenGerardsWhittle2014}
J.~Geelen, B.~Gerards, and G.~Whittle.
\newblock Solving {R}ota's conjecture.
\newblock {\em Notices Amer. Math. Soc.}, 61(7):736--743, 2014.

\bibitem{GeelenGerardsWhittle}
J.~Geelen, B.~Gerards, and G.~Whittle.
\newblock The highly connected matroids in minor-closed classes.
\newblock {\em Ann. Comb.}, 19(1):107--123, 2015.

\bibitem{Geelen-Kung-Whittle2009}
J.~Geelen, J.~P.~S. Kung, and G.~Whittle.
\newblock Growth rates of minor-closed classes of matroids.
\newblock {\em J. Combin. Theory Ser. B}, 99(2):420--427, 2009.

\bibitem{GeelenNelson2010}
J.~Geelen and P.~Nelson.
\newblock The number of points in a matroid with no {$n$}-point line as a
  minor.
\newblock {\em J. Combin. Theory Ser. B}, 100(6):625--630, 2010.

\bibitem{GeelenNelson}
J.~Geelen and P.~Nelson.
\newblock Projective geometries in exponentially dense matroids. {I}.
\newblock {\em J. Combin. Theory Ser. B}, 113:185--207, 2015.

\bibitem{GrebennikovMarciano}
A.~Grebennikov and J.~P. Marciano.
\newblock ${C}_{10}$ has positive {T}ur{\'a}n density in the hypercube.
\newblock {\em J. Graph Theory}, 2024.

\bibitem{Keevash}
P.~Keevash.
\newblock Hypergraph {T}ur\'{a}n problems.
\newblock In {\em Surveys in combinatorics 2011}, volume 392 of {\em London
  Math. Soc. Lecture Note Ser.}, pages 83--139. Cambridge Univ. Press,
  Cambridge, 2011.

\bibitem{Kelmans}
A.~K. Kelmans.
\newblock The properties of the characteristic polynomial of a graph.
\newblock In {\em Cybernetics---in the service of {C}ommunism, {V}ol. 4
  ({R}ussian)}, pages 27--41. ``\`Energija'', Moscow, 1967.

\bibitem{Kung1993}
J.~P.~S. Kung.
\newblock Extremal matroid theory.
\newblock In {\em Graph structure theory ({S}eattle, {WA}, 1991)}, volume 147
  of {\em Contemp. Math.}, pages 21--61. Amer. Math. Soc., Providence, RI,
  1993.

\bibitem{LohPoShenTaiTimmonsZhou}
P.-S. Loh, M.~Tait, C.~Timmons, and R.~M. Zhou.
\newblock Induced {T}ur\'{a}n numbers.
\newblock {\em Combin. Probab. Comput.}, 27(2):274--288, 2018.

\bibitem{LuZhao}
L.~Lu and Y.~Zhao.
\newblock An exact result for hypergraphs and upper bounds for the {T}ur\'{a}n
  density of {$K_{r+1}^r$}.
\newblock {\em SIAM J. Discrete Math.}, 23(3):1324--1334, 2009.

\bibitem{Lucchini}
A.~Lucchini.
\newblock Generating hypergraphs of finite groups.
\newblock {\em arXiv:2404.11186}, 2024.

\bibitem{MotzkinStraus}
T.~S. Motzkin and E.~G. Straus.
\newblock Maxima for graphs and a new proof of a theorem of tur{\'a}n.
\newblock {\em Canad. J. Math.}, 17:533--540, 1965.

\bibitem{Mukherjee}
S.~Mukherjee.
\newblock Extremal numbers of hypergraph suspensions of even cycles.
\newblock {\em European J. Combin.}, 118:Paper No. 103935, 16, 2024.

\bibitem{Nelson}
P.~Nelson.
\newblock Projective geometries in exponentially dense matroids. {II}.
\newblock {\em J. Combin. Theory Ser. B}, 113:208--219, 2015.

\bibitem{Oxley2011}
J.~Oxley.
\newblock {\em Matroid theory}, volume~21 of {\em Oxford Graduate Texts in
  Mathematics}.
\newblock Oxford University Press, Oxford, second edition, 2011.

\bibitem{Plotkin}
M.~Plotkin.
\newblock Binary codes with specified minimum distance.
\newblock {\em IRE Trans.}, IT-6:445--450, 1960.

\bibitem{Purdy}
G.~Purdy.
\newblock The independent sets of rank {$k$} of a matroid.
\newblock {\em Discrete Math.}, 38(1):87--91, 1982.

\bibitem{Rota}
G.-C. Rota.
\newblock Combinatorial theory, old and new.
\newblock In {\em Actes du {C}ongr\`es {I}nternational des {M}ath\'{e}maticiens
  ({N}ice, 1970), {T}ome 3}, pages 229--233. Gauthier-Villars \'{E}diteur,
  Paris, 1971.

\bibitem{Sidorenko}
A.~Sidorenko.
\newblock Tur\'{a}n numbers of {$r$}-graphs on {$r+1$} vertices.
\newblock {\em J. Combin. Theory Ser. B}, 169:150--160, 2024.

\bibitem{Sidorenko78}
A.~F. Sidorenko.
\newblock The maximal number of edges in a homogeneous hypergraph containing no
  prohibited subgraphs.
\newblock {\em Mathematical notes of the Academy of Sciences of the USSR},
  41:247--259, 1987.

\bibitem{Turan}
P.~Tur{\'a}n.
\newblock On an extremal problem in graph theory.
\newblock {\em Mat. Fiz. Lapok}, 48:436--452, 1941.

\bibitem{Turan1961}
P.~Tur\'{a}n.
\newblock Research problems.
\newblock {\em Magyar Tud. Akad. Mat. Kutat\'{o} Int. K\"{o}zl.}, 6:417--423,
  1961.

\bibitem{Tutte}
W.~T. Tutte.
\newblock A homotopy theorem for matroids. {I}, {II}.
\newblock {\em Trans. Amer. Math. Soc.}, 88:144--174, 1958.

\end{thebibliography}

\end{document}